\numberwithin{equation}{section}
\numberwithin{equation}{section}
\newtheorem{theorem}{Theorem}[section]
\newtheorem{proposition}[theorem]{Proposition}
\newtheorem{lemma}[theorem]{Lemma}
\newtheorem{remark}[theorem]{Remark}
\theoremstyle{definition}
\def\XXint#1#2#3{{\setbox0=\hbox{$#1{#2#3}{\int}$}
     \vcenter{\hbox{$#2#3$}}\kern-.5\wd0}}
\def\R{{\mathbb R}}
\renewcommand{\d}{\delta }
\newcommand{\ve}{\epsilon}
\begin{document}
\title{Blow up solutions for Sinh-Gordon equation with residual mass}
\author{ Weiwei ~Ao}
\address{ Weiwei ~Ao,~Department of mathematics and statistics, Wuhan university, Wuhan, 430072, P.R. China }
\email{wwao@whu.edu.cn}

\author{ Aleks Jevnikar}
\address{ Aleks Jevnikar,~Department of Mathematics, Computer Science and Physics, University of Udine, Via delle Scienze 206, 33100 Udine, Italy}
\email{aleks.jevnikar@uniud.it}

\author{ Wen Yang}
\address{ Wen ~Yang,~Wuhan Institute of Physics and Mathematics, Chinese Academy of Sciences, P.O. Box 71010, Wuhan 430071, P. R. China}
\email{wyang@wipm.ac.cn}
\begin{abstract}
We are concerned with the Sinh-Gordon equation in bounded domains. We construct blow up solutions with residual mass exhibiting either partial or asymmetric blow up, i.e. where both the positive and negative part of the solution blow up. This is the first result concerning residual mass for the Sinh-Gordon equation showing in particular that the concentration-compactness theory of Brezis-Merle can not be extended to this class of problems.
\end{abstract}

\maketitle

{\footnotesize
\emph{Keywords}: Sinh-Gordon equation, blow up analysis, residual mass, finite-dimensional reduction.

\medskip

\emph{2010 MSC}: 35J15, 35J61, 35B44.}

\section{Introduction}\label{sec1}
We are concerned with the following Sinh-Gordon equation 
\begin{equation}
\label{sinh-gordon}
\begin{cases}
\Delta u+\rho^+\dfrac{e^u}{\int_{\Omega }e^udx}-\rho^-\dfrac{e^{-u}}{\int_{\Omega}e^{-u}dx}=0\quad &\mbox{ in }\Omega\\
u=0 &\mbox{ on }\partial\Omega.
\end{cases}
\end{equation}
where $\Omega \subset \R^2$ is smooth and bounded and $\rho^+, \rho^-$ are two positive parameters. The latter problem arises as a mean field equation in the study of the equilibrium turbulence \cite{Montgomery, onsager}. Moreover, it is also related to constant mean curvature surfaces \cite{jwyz,wente}. Observe that for $\rho^-=0$ \eqref{sinh-gordon} reduces to the standard Liouville equation which has been extensively studied in the literature. Therefore, many efforts have been done to study existence \cite{bjmr, jev1, jev2, jev3, jev4} and blow up phenomena \cite{ajy,efp,jevnikar-wei-yang,jevnikar-wei-yang2,jwyz,ohtsuka-suzuki,pistoia-ricchiardi,pistoia-ricchiardi2,ricciardi-takahashi} for this class of problems. 

In the present paper we further explore the blow up phenomenon of \eqref{sinh-gordon}.  Let $u_n$ be a sequence of solutions to (\ref{sinh-gordon}) corresponding to $\rho_n^{\pm}\leq C$. Define the positive and negative blow up set as
\begin{equation*}
S_{\pm}:=\left\{x\in \Omega : \ \exists x_n\to \Omega \ s.t. \ \pm u_n(x_n)-\log \int_{\Omega}e^{\pm u_n}dx + \log \rho_n^{\pm} \to +\infty \ as \ n\to \infty \right\}.
\end{equation*}
It is easy to see that $S_{\pm}$ are finite. Moreover, by \cite{ajy} we have $S_{\pm}\cap \partial\Omega=\emptyset$. For $p\in S_{\pm}$ the local mass is defined by
\begin{equation*} 
m_{\pm} (p)=\lim_{r\to 0}\lim_{n\to \infty}\dfrac{\rho_n^{\pm}\int_{B_r(p)}e^{\pm u_n}dx}{\int_{\Omega} e^{\pm u_n}dx}.
\end{equation*}
By \cite{jevnikar-wei-yang,jwyz} we know that $m_{\pm}(p)$ satisfy a quantization property, i.e. $m_{\pm}(p)\in8\pi\mathbb{N}$. Moreover, in view of the relation
\begin{equation*}
(m_+(p)-m_-(p))^2=8\pi(m_+(p)+m_-(p)),
\end{equation*}
see for example \cite{ohtsuka-suzuki}, the couple $(m_+, m_-)$, up to the order, takes the value in the set
\begin{equation} \label{Sigma}
\Sigma=\Big\{8\pi \left(\frac{k(k-1)}{2}, \frac{k(k+1)}{2}\right), \ k\in \mathbb{N}\setminus \{0\} \Big\}.
\end{equation}
Finally, by standard analysis \cite{ohtsuka-suzuki} one has, for $n\to+\infty$,
$$
	\rho_n^{\pm}\dfrac{e^{\pm u_n}}{\int_{\Omega }e^{\pm u_n}dx} \rightharpoonup \sum_{p\in S_{\pm}} m_{\pm}(p)\d_{p} + r_{\pm},
$$
in the sense of measures, where $r_{\pm}\in L^1(\Omega)$ are residual terms. From the above convergence, $\rho^{\pm}$ will be called global masses of the blow up solutions. Observe that both the local masses and the residual terms affect the global masses. In striking contrast with the concentration-compactness theory of Brezis-Merle \cite{bm}, the latter residuals may not be zero a priori. This fact has important effects in the blow up analysis, variational analysis and Leray-Schauder degree theory of \eqref{sinh-gordon}. One of the goals of the present paper is to provide the first explicit example of blow up solutions exhibiting residual terms, thus confirming that the concentration-compactness theory can not be extended to this class of problems.

\medskip

\subsection{Partial blow up}
We start here with a related problem, that is partial blow up with prescribed global mass. More precisely, we look for blowing up solutions $-u_n$ with $\rho^-_n\to 8\pi k, k\in\mathbb{N}$, such that $u_n$ have prescribed global mass $\rho^+_n=\rho^+\in(0, 8\pi)$.
To this end we introduce
\begin{equation}\label{configuration}
\mathcal{F}_k\Omega:=\biggr\{{\bm\xi}:=(\xi_1,\cdots,\xi_k)\in \Omega^k: \ \xi_i\neq \xi_j \mbox{ for }i \neq j\biggr\}
\end{equation}
and consider the following singular (at $\xi_i\in\Omega$) mean field equation:
\begin{equation}\label{equationofz}
\begin{cases}
\Delta z(x,{\bm\xi})+\rho^+\dfrac{h(x, {\bm\xi})e^{z(x,{\bm\xi})}}{\int_{\Omega}h(x, {\bm\xi})e^{z(x,{\bm\xi})}dx}=0 \quad &\mbox{in}~ \Omega,\\
z(x,{\bm\xi})=0 &\mbox{ on }\partial \Omega
\end{cases}
\end{equation}
where ${\bm\xi}\in \mathcal{F}_k\Omega$ and $h(x,{\bm\xi})=e^{-8\pi \sum_{i=1}^kG(x,\xi_i)}$. Here $G(x,y)$ is the Green function of the Laplacian operator in $\Omega$ with Dirichlet boundary condition and we denote its regular part by $H(x,y)$. \eqref{equationofz} is the Euler-Lagrange equation of the functional
$$
	I_{\bm\xi}(z):= \frac12 \int_{\Omega}|\nabla z|^2dx-\rho^+\log\left( \int_{\Omega} h(x,\bm\xi)e^z dx \right).
$$ 
To the latter functional and (a combination of) the Green functions we associate the following map:
\begin{equation}
\label{Lambda}
\Lambda({\bm\xi}):=\frac{1}{2}I_{\bm\xi}(z(\cdot,{\bm\xi}))-32\pi^2\Big( \sum_{i=1}^kH(\xi_i,\xi_i) +\sum_{j\neq i}G(\xi_i,\xi_j)\Big).
\end{equation}
It is known by \cite{bartolucci-lin} that if $\Omega$ is simply connected and $\rho^+\in (0, 8\pi)$, then for any ${\bm\xi}\in \mathcal{F}_k\Omega$ there exists a unique solution to (\ref{equationofz}) and the solution is non-degenerate, in the sense that the linearized problem admits only the trivial solution. Then, by making use of the implicit function theorem it is not difficult to show that the function $\Lambda$ is smooth, see for example \cite{dpr}. Finally, as in \cite{li}, a compact set $\mathcal{K}\subset \mathcal{F}_k\Omega$ of critical points of $\Lambda$ is said to be $C^1$-stable if, fixed a neighborhood $\mathcal{U}$ of $\mathcal{K}$, any map $\Phi: \mathcal{U}\to \R$ sufficiently close to $\Lambda$ in $C^1$-sense has a critical point in $\mathcal{U}$.

The first result of this paper is the following.
\begin{theorem}\label{thm}
Let $\Omega$ be simply connected, $\rho^+\in (0, 8\pi)$ and let $\mathcal{K}\subset \mathcal{F}_k\Omega$, $k\in \mathbb{N}$, be a $C^1$-stable set of critical points of $\Lambda$. Then, there exists $\lambda_0>0$ such that for any $\lambda \in (0,\lambda_0)$ there exists $u_\lambda$ solution of \eqref{sinh-gordon} with $\rho_\lambda^{\pm}$ such that, for $\lambda\to0$
\begin{itemize}
\item[1.]$
\rho^+_\lambda =\rho^+ , \ \rho^-_\lambda \to 8k\pi.$
\item[2.]
There exist ${\bm\xi}(\lambda)\in \mathcal{F}_k\Omega$ and $\delta_i(\lambda)>0$ such that $d({\bm\xi}, \mathcal{K})\to 0, \ \delta_i\to 0$ and
\begin{equation*}
u_\lambda(x)\to z(x,{\bm\xi})-\sum_{i=1}^k\Big(\log \frac{1}{(\delta_i^2+|x-\xi_i|^2)^2}+8\pi H(x, \xi_i)  \Big) \quad \mbox{in } H^1_0(\Omega),
\end{equation*}
where $z$ solves (\ref{equationofz}).
\end{itemize}
\end{theorem}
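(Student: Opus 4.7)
My plan is to perform a Lyapunov--Schmidt finite-dimensional reduction, with an ansatz whose positive part is the unique (by Bartolucci--Lin) non-degenerate solution $z(\cdot,\bm\xi)$ of the singular mean field equation \eqref{equationofz}, while the negative part is a superposition of standard Liouville bubbles. Explicitly, denoting by $PW_{\delta_i,\xi_i}$ the $H^1_0(\Omega)$-projection of $w_{\delta_i,\xi_i}(x)=\log\frac{8\delta_i^2}{(\delta_i^2+|x-\xi_i|^2)^2}$, which satisfies $PW_{\delta_i,\xi_i}=\log\frac{1}{(\delta_i^2+|x-\xi_i|^2)^2}+8\pi H(\cdot,\xi_i)+O(\delta_i^2)$, I would take
$$
U_{\lambda,\bm\xi}(x)=z(x,\bm\xi)-\sum_{i=1}^k PW_{\delta_i,\xi_i}(x),
$$
and seek $u_\lambda=U_{\lambda,\bm\xi}+\phi$ with $\phi$ small. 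The scaling parameters $\delta_i$ are chosen so that the vanishing of $e^{u_\lambda}$ at $\xi_i$ cancels the weight $h(x,\bm\xi)$ in \eqref{equationofz} to leading order; this fixes $\delta_i=\delta_i(\lambda,\bm\xi)$ in terms of $z(\xi_i,\bm\xi)$, $H(\xi_i,\xi_i)$ and $G(\xi_i,\xi_j)$ up to a single small parameter which is identified with $\lambda$, and it forces $\rho^-_\lambda=8\pi k+o(1)$ through the $8\pi$ bubble mass, while $\rho^+_\lambda=\rho^+$ is built into the normalization of $z$.

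Substituting into \eqref{sinh-gordon} reduces the problem to a nonlinear equation $\mathcal{L}[\phi]=\mathcal{N}[\phi]+R_\lambda$, where $\mathcal{L}$ is the linearization at $U_{\lambda,\bm\xi}$. The positive component of $\mathcal{L}$ is invertible because $z$ is non-degenerate for $\rho^+\in(0,8\pi)$, and the negative component is the standard bubble linearization, whose approximate kernel is three-dimensional per bubble (dilation plus two translations). In weighted $L^\infty$ norms tailored to the bubble profile, one obtains the classical a-priori estimate $\|\phi\|_\ast\le C|\log\lambda|\,\|R_\lambda\|_{\ast\ast}$, and a contraction-mapping argument produces $\phi=\phi(\bm\xi)$ orthogonal to the approximate kernel, together with $C^1$-estimates in $\bm\xi$.

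To make the Lagrange multipliers corresponding to the approximate kernel vanish one is left with a finite-dimensional problem: the two translation conditions per bubble amount to criticality of the reduced functional in $\xi_i$, while the dilation condition fixes $\delta_i$. Expanding the Sinh--Gordon energy at $U_{\lambda,\bm\xi}+\phi(\bm\xi)$, the bubble self-interactions contribute an additive $\lambda$-dependent constant, the bubble--boundary and bubble--bubble interactions give rise to the $H(\xi_i,\xi_i)$ and $G(\xi_i,\xi_j)$ terms in \eqref{Lambda}, and the coupling with the positive part produces $\tfrac12 I_{\bm\xi}(z(\cdot,\bm\xi))$; thus the reduced functional admits the $C^1$-expansion $c_\lambda+\lambda\,\Lambda(\bm\xi)+o(\lambda)$ on $\mathcal{F}_k\Omega$. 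The $C^1$-stability of $\mathcal{K}$ then produces a critical point $\bm\xi(\lambda)\to\mathcal{K}$, and hence a genuine solution of \eqref{sinh-gordon}.

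The main obstacle will be the sharp $C^1$-expansion of the reduced functional: the interaction between the positive part $z(\cdot,\bm\xi)$ and the negative bubbles is delicate because the vanishing of $e^{u_\lambda}$ at $\xi_i$ must cancel against $h(x,\bm\xi)$ up to $o(\lambda)$ uniformly in $\bm\xi$, and the choice of $\delta_i$ must be self-consistent with the reduced equation rather than decoupled from it. Once this expansion is in place, the rest of the argument follows standard patterns for Liouville-type problems.
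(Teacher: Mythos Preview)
Your overall strategy coincides with the paper's: the same ansatz $z(\cdot,\bm\xi)-\sum_i Pw_{\delta_i,\xi_i}$, the same choice of $\delta_i$ (the paper takes $8\delta_i^2=\lambda\, d_i(\bm\xi)$ with $d_i$ as in \eqref{delta}), linear theory followed by a contraction argument, and a variational reduction concluded via $C^1$-stability of $\mathcal{K}$. Two points of divergence deserve comment.

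First, the paper works throughout in $H^1_0(\Omega)$ and $L^p(\Omega)$ rather than weighted $L^\infty$, and it fixes $\delta_i=\delta_i(\lambda,\bm\xi)$ from the outset by \eqref{delta}, so that the reduction is carried out \emph{only} modulo the translation modes $PZ_i^j$, $j=1,2$; there is no separate dilation equation in the finite-dimensional problem. Your three-parameter-per-bubble reduction is an equally valid packaging, but it is not what the paper does.

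Second---and this is a concrete error in your proposal---the reduced energy does \emph{not} expand as $c_\lambda+\lambda\,\Lambda(\bm\xi)+o(\lambda)$. The correct expansion (Proposition~\ref{pro1} together with Proposition~\ref{pro2}) is
\[
\tilde J(\bm\xi)=J(W_{\bm\xi}+\phi_{\bm\xi})=\Lambda(\bm\xi)-8\pi k\log\lambda-(16\pi-24\pi\log 2)k+o(1)
\]
in $C^1$: the $\bm\xi$-dependent part $\Lambda(\bm\xi)$ enters at the $O(1)$ level, with remainder $o(1)$, not at order $\lambda$. This is immediate once one computes $\tfrac12\|\nabla W\|^2$: the Robin and Green contributions $H(\xi_i,\xi_i)$, $G(\xi_i,\xi_j)$ and the coupling terms $z(\xi_i,\bm\xi)$ are all $O(1)$, while the only divergent piece is $-16\pi\log\delta_i^2\sim-16\pi\log\lambda$, which is independent of $\bm\xi$. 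The $C^1$-stability argument still goes through at this scale (indeed more comfortably, since the remainder is only required to be $o(1)$), so the mistake is not fatal to the strategy, but your stated expansion and the implied error control must be corrected.
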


Some comments are in order. The assumptions $\Omega$ simply connected  and $\rho^+\in (0, 8\pi)$ guarantee the existence of a unique non-degenerate solution to \eqref{equationofz}: in general, the above result holds true whenever such solution exists. For example, one can drop the condition on $\Omega$ by assuming $\rho^+$ to be sufficiently small, see for example \cite{dpr}. 

On the other hand, if $\Omega$ is simply connected and $\rho^+\in (0, 8\pi)$ it is not difficult to show that for $k=1$ the minimum of $\Lambda$ is a $C^1$-stable set of critical points of $\Lambda$, see for example \cite{dpr}. Moreover, for non-simply connected domains the function $\Lambda$ always admits a $C^1$-stable set of critical points \cite{dpkm}. 

Therefore, the conclusion of Theorem \ref{thm} holds true if either $\Omega$ is simply connected, $\rho^+\in (0, 8\pi)$ and $k=1$, or $\Omega$ is multiply connected, $\rho^+$ sufficiently small and $k\geq1$. Finally, the location of the blow up set can be determined by using the following expression, which can be derived similarly as in \cite{dpr}:
\begin{equation} \label{der}
	\partial_{\xi_j}\Lambda({\bm\xi})=8\pi\frac{\partial z}{\partial x}(\xi_j,{\bm\xi})-32\pi^2\Big( \frac{\partial H}{\partial x}(\xi_j,\xi_j)+\sum_{i\neq j}\frac{\partial G}{\partial x}(\xi_i,\xi_j)\Big).
\end{equation}

\medskip

\subsection{Asymmetric blow up}
We next construct blow up solutions with residual mass exhibiting asymmetric blow up, i.e. where both the positive and negative part of the solution blow up. Since the local masses $(m_+,m_-)$ belong to the set $\Sigma$ defined in \eqref{Sigma}, for $k \geq 2$ we look for blowing up solution $u_n$ with $\rho_n^-\to 4\pi k(k+1)$ and $\rho_n^+= \rho^+=4\pi k(k-1)+\rho_0$, where $\rho_0\in (0, 8\pi)$ is a fixed residual mass. For simplicity of presentation we assume that $k$ is odd, the case of $k$ even being similar. We consider here $l$-symmetric domains $\Omega$ with $l\geq2$ even, i.e. if $x\in\Omega$ then $\mathcal{R}_l\cdot x\in\Omega$, where
\begin{equation}\label{symmetry}
\mathcal{R}_l:=\left(\begin{array}{cc}
\cos\frac{2\pi}{l}&\sin\frac{2\pi}{l}\vspace{0.2cm}\\
-\sin\frac{2\pi}{l}&\cos\frac{2\pi}{l}
\end{array}
\right), \quad \mbox{$l\geq 2$ even}.
\end{equation}
Consider then the following singular (at $x=0$) mean field equation:
\begin{equation}
\label{equationofz-1}
\begin{cases}
\Delta z(x)+\rho_0\dfrac{e^{z(x)-8k\pi G(x,0)}}{\int_{\Omega}e^{z(x)-8k\pi G(x,0)}dx}=0 \quad &\mbox{in} ~\Omega,\\
z(x)=0 &\mbox{ on }\partial \Omega.
\end{cases}
\end{equation}
Again by \cite{bartolucci-lin} we know that if $\Omega$ is simply connected and $\rho^+\in (0, 8\pi)$, then there exists a unique non-degenerate solution to (\ref{equationofz-1}).

The second result of this paper is the following.
\begin{theorem}\label{thm-1}
Let $\Omega$ be a simply connected $l$-symmetric domain according to \eqref{symmetry} and $\rho^+=4\pi k(k-1)+\rho_0$ with $k\in\mathbb{N}$ odd and $\rho_0\in (0, 8\pi)$. Then, there exists $\lambda_0>0$ such that for any $\lambda \in (0,\lambda_0)$, there exists $u_\lambda$ solution of \eqref{sinh-gordon} with $\rho_\lambda^{\pm}$ such that, for $\lambda \to 0$
\begin{itemize}
\item[1.]$
\rho^+_\lambda =\rho^+ , \ \rho^-_\lambda \to 4\pi k(k+1).$
\item[2.]
There exists $\delta_i(\lambda)\to 0$ such that
\begin{equation*}
u_\lambda(x)\to z(x)+\sum_{i=1}^k(-1)^i\Big(\log \frac{1}{(\delta_i^{\alpha_i}
+|x|^{\alpha_i})^2}+4\pi \alpha_iH(x, 0)  \Big)  \quad \mbox{in } H^1_0(\Omega), \quad \alpha_i=4i-2,
\end{equation*}
where $z$ solves (\ref{equationofz-1}).
\end{itemize}
\end{theorem}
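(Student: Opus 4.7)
The plan is to use a Lyapunov--Schmidt / finite-dimensional reduction, writing $u_\lambda=W_\lambda+\phi_\lambda$ with $W_\lambda$ an explicit $l$-symmetric ansatz and $\phi_\lambda$ a small correction in the $l$-symmetric subspace of $H^1_0(\Omega)$. Denoting by $PU_{i,\delta}$ the $H^1_0$-projection of the singular Liouville profile $U_{i,\delta}(x)=\log\frac{1}{(\delta^{\alpha_i}+|x|^{\alpha_i})^2}$ with $\alpha_i=4i-2$, the ansatz is
\begin{equation*}
W_\lambda(x):=z(x)+\sum_{i=1}^k(-1)^i\bigl(PU_{i,\delta_i}(x)+c_i\bigr),
\end{equation*}
with constants $c_i$ chosen so that the nonlocal denominators $\int_\Omega e^{\pm W_\lambda}$ reproduce the expected local mass $4\pi\alpha_i=8\pi(2i-1)$ at scale $\delta_i$, and scales $\delta_i=\lambda^{\beta_i}$, $0<\beta_1<\dots<\beta_k$, to be fixed later. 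Since $k$ is odd the outermost bubble is negative, so the even-indexed bubbles contribute $\sum_{i\text{ even}}8\pi(2i-1)=4\pi k(k-1)$ of positive mass and the odd-indexed ones $\sum_{i\text{ odd}}8\pi(2i-1)=4\pi k(k+1)$ of negative mass; the residual $\rho_0$ on the positive side is carried by $z$ solving \eqref{equationofz-1}.

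I would then estimate the error $\mathcal{E}_\lambda:=\Delta W_\lambda+\rho^+_\lambda\frac{e^{W_\lambda}}{\int_\Omega e^{W_\lambda}}-\rho^-_\lambda\frac{e^{-W_\lambda}}{\int_\Omega e^{-W_\lambda}}$. The combined tower reproduces the singular weight $e^{-8k\pi G(x,0)}$ present in \eqref{equationofz-1}, so the equation for $z$ absorbs the outer term; on each annulus $|x|\sim\delta_i$ the $i$-th Liouville equation absorbs its leading term, the other bubbles being frozen to their far-field expansion $-\alpha_j\log|x|+4\pi\alpha_j H(x,0)+O(\delta_j^{\alpha_j})$. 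A careful bookkeeping then yields $\|\mathcal{E}_\lambda\|_\ast=O(\lambda^\sigma)$ for some $\sigma>0$ in a weighted $L^p$-norm $\|\cdot\|_\ast$ adapted to the multi-scale tower.

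The delicate step is the linear theory. The linearization $L_\lambda\phi=\Delta\phi+V^+_\lambda(\phi-\overline{\phi}^+)+V^-_\lambda(\phi-\overline{\phi}^-)$, with $V^\pm_\lambda=\rho^\pm_\lambda e^{\pm W_\lambda}/\int_\Omega e^{\pm W_\lambda}$ and weighted means $\overline{\phi}^\pm$, has an approximate kernel generated at each bubble by one dilation mode $Z^0_i$ and two translation modes. The $l$-symmetry of the ansatz and of $\phi$ (with $l\ge 2$ even) kills every translation mode, leaving only $Z^0_1,\dots,Z^0_k$; uniform invertibility of $L_\lambda$ on the $l$-symmetric orthogonal complement of $\mathrm{span}\{Z^0_i\}$ is then proved by contradiction, rescaling a putative unit-norm kernel sequence at each scale $\delta_i$ to produce, in the limit, either a bounded solution of the linearized singular Liouville equation of exponent $\alpha_i$ on $\R^2$ (forced into its known finite-dimensional kernel, hence orthogonal to the complement) or, at the outer scale, a solution of the linearization of \eqref{equationofz-1} (trivial by the non-degeneracy result of \cite{bartolucci-lin}). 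I expect this to be the main obstacle, because the simultaneous presence of both exponentials, the wide scale separation and the coupling to the non-small profile $z$ on overlapping annuli demand a multi-scale blow-up analysis substantially finer than the standard single-bubble argument.

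A contraction mapping in the $l$-symmetric space then produces $\phi_\lambda=\phi_\lambda(\delta_1,\dots,\delta_k)$ with $L_\lambda\phi_\lambda=-\mathcal{E}_\lambda+\sum_i\mu_iZ^0_i$ and $\|\phi_\lambda\|_{H^1_0}\to 0$. The reduced system $\mu_i(\delta_1,\dots,\delta_k)=0$ is asymptotically triangular in the exponents $\beta_i$; its leading coefficients encode the Pohozaev-type balance forcing $(m_+,m_-)\in\Sigma$, and since $\alpha_i=4i-2$ is exactly the choice compatible with that balance, the system is solvable and fixes the $\beta_i$ uniquely up to higher-order corrections. Setting $u_\lambda:=W_\lambda+\phi_\lambda$ then gives a solution of \eqref{sinh-gordon} with the mass limits of item~(1), and the $H^1_0(\Omega)$-asymptotics of item~(2) follows at once from the explicit form of $W_\lambda$ together with $\|\phi_\lambda\|_{H^1_0}\to 0$.
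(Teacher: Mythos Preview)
Your overall scheme is essentially the paper's: same ansatz ($z$ solving \eqref{equationofz-1} glued to the alternating tower of singular Liouville bubbles with $\alpha_i=4i-2$), same use of $l$-symmetry to kill the translation modes, same blow-up/rescaling contradiction argument for the linear theory invoking the non-degeneracy of $z$ at the outer scale, and the same contraction step.

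The one genuine difference is the treatment of the dilation modes $Z^0_i$. You plan to project them out and then solve a $k$-dimensional reduced system $\mu_i=0$ for the scales. The paper avoids this entirely: it fixes the $\delta_i$ \emph{a priori} as $\delta_i=d_i\lambda^{(k-i+1)/(4i-2)}$ with explicit $d_i$ determined by matching conditions (making the interaction functions $\Theta_i,T_i$ vanish at leading order), reparametrizes the negative term as $\lambda e^{-u}$ (so its linearization is simply $\lambda e^{-W}\phi$, with no mean subtraction on that side), and then proves the full linear operator is invertible on $\mathcal{H}_l$ with bound $\|\phi\|\le C|\log\lambda|\|h\|$ and \emph{no} orthogonality constraints. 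The would-be kernel limits $\tilde\phi_i\to\gamma_iZ^0_i$ are eliminated by testing the equation against $PZ^0_i$ and $Pw_i$, which yields an algebraic system forcing all $\gamma_i=0$ (the device of \cite{grossi-pistoia}); the $|\log\lambda|$ loss is the price for this. This buys a direct proof with no reduced problem at all; your route could in principle work too, but costs the extra bookkeeping of the reduced system and its solvability.

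Two small corrections to your sketch: the scale ordering is inverted (the annular decomposition needs $\delta_1<\dots<\delta_k$, i.e.\ $\beta_1>\dots>\beta_k$, with the $i=1$ bubble innermost and the $i=k$ bubble outermost), and it is the matching conditions, not the reduced system, that fix the exponents $\beta_i$; a reduced system would only adjust the prefactors.
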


Observe that the assumption $\Omega$ simply connected and $\rho_0\in (0, 8\pi)$ is used only to ensure the existence of a non-degenerate solution to \eqref{equationofz-1}: in general, the above result holds true whenever such solution exists. On the other hand, the symmetry condition of the domain is imposed to rule out the degeneracy of the singular Liouville equation. 

\medskip

The argument follows the strategy introduced in \cite{dpr,dpr1} for the Toda system, that is a system of Liouville-type equations, and it is based on perturbation method starting from an approximate solution and studying the invertibility of the linearized problem. The main difficulty is due to the coupling of the local and global nature of the problem since we are prescribing both the local and global masses. In particular, blow up solutions of \eqref{sinh-gordon} with local masses $(4\pi k(k-1), 4\pi k(k+1))$ have been constructed in \cite{grossi-pistoia} by superposing $k$ different bubbles with alternating sign. Gluing the solution of \eqref{equationofz-1} to the latter blow up solutions we are able to construct blow up solutions with residual mass, that is with $\rho^+_n=\rho^+=4\pi k(k-1)+\rho_0$ and $\rho^-_n\to 4\pi k(k+1)$ for any $k\geq 2$. In this generality the latter construction is quite delicate and technically more difficult compared to the one in \cite{dpr1, grossi-pistoia}. We remark that the same strategy can be carried out for more general asymmetric Sinh-Gordon equations, for example for the Tzitz\'eica equation \cite{jevnikar-yang}.

The paper is organized as follows. Section \ref{sec2} contains some notation and preliminary results which will be used in the paper. Section \ref{sec3} is devoted to the proof of Theorem \ref{thm} while the proof of Theorem \ref{thm-1} is derived in Section \ref{sec4}.


\vspace{0.5cm}
\section{Preliminaries}\label{sec2}

In this section we collect some notation and useful information that we will use in this paper. We shall write
$$\|u\|=\Big( \int_{\Omega}|\nabla u|^2dx\Big)^{\frac{1}{2}} \quad \mbox{and} \quad \|u\|_p=\Big( \int_{\Omega}u^pdx\Big)^{\frac{1}{p}} $$
to denote the norm in $H_0^1(\Omega)$ and in $L^p(\Omega)$, respectively, for $1\leq p\leq +\infty$.
For $\alpha\geq 2$, let us define the Hilbert spaces:
\begin{equation*}
L_{\alpha}(\R^2):=L^2\Big( \R^2, \dfrac{|y|^{\alpha-2}}{(1+|y|^\alpha)^2}dy \Big),
\end{equation*}
\begin{equation*}
H_{\alpha}(\R^2):=\{u\in W_{loc}^{1,2}(\R^2)\cap L_\alpha(\R^2): \|\nabla u\|_{L^2(\R^2)}<\infty\},
\end{equation*}
with $\|u\|_{L_{\alpha}}$ and $\|u\|_{H_{\alpha}}:=(\|\nabla u\|_{L^2(\R^2)}^2+\|u\|_{L_{\alpha}}^2)^{\frac{1}{2}}$ denoting their norms, respectively. For simplicity, we will denote $L_2$ and $H_2$ by $L$ and $H$, respectively. Let us recall that the embedding $H_\alpha(\R^2)\to L_\alpha(\R^2)$ is compact \cite{dpr1}. Moreover, for $v\in L^{p}(\Omega)$ let $u$ be the solution of
\begin{equation*}
\Delta u=v \mbox{ in }\Omega, \qquad u=0\mbox{ on }\partial \Omega.
\end{equation*}
Then one has $\|u\|\leq c_p \|v\|_p$ for some constant $c_p>0$ depending only on $\Omega$ and $p>1$.

\medskip

The symbol $B_r(p)$ will stand for the open metric ball of
radius $r$ and center $p$. To simplify the notation we will write $B_r$ for balls which are centered at $0$.
Throughout the whole paper $c,C$ will stand for constants which
are allowed to vary among different formulas or even within the same line.

\vspace{0.5cm}
\section{Partial blow up}\label{sec3}

\subsection{ Approximate solutions}\label{sec3.1}

In order to prove Theorem \ref{thm} we introduce the associated equation
\begin{equation}
\label{mainproblem}
\begin{cases}
\Delta u+\rho^+\dfrac{e^u}{\int_\Omega e^u}-\lambda e^{-u}=0 \quad \mbox{ in }\Omega, \\
u=0 \quad \mbox{ on }\partial \Omega
\end{cases}
\end{equation}
where $\lambda>0 $ will be suitably chosen small. First let us introduce the approximate solutions we will use. Recall that solutions of the following regular Liouville equation:
\begin{equation*}
\Delta w+e^w=0 \quad \mbox{ in }\R^2, \qquad \int_{\R^2}e^w dx<\infty,
\end{equation*}
are given by
\begin{equation*}
w_{\delta, \xi}(x)=\log \frac{8\delta^2}{(\delta^2+|x-\xi|^2)^2}
\end{equation*}
for $\delta>0, \ \xi\in \R^2$ and we set
\begin{equation*}
w(x)=\log \frac{8}{(1+|x|^2)^2}.
\end{equation*}
Since we are considering Dirichlet boundary condition, let us introduce the projection:
\begin{equation*}
\Delta Pu=\Delta u \quad \mbox{ in }\Omega, \qquad Pu=0 \quad \mbox{ on }\partial \Omega.
\end{equation*}
It is well-known that
\begin{equation}\label{bubble-projection}
Pw_{\delta, \xi}(x)=w_{\delta, \xi}(x)-\log 8\delta^2+8\pi H(x, \xi)+O(\delta^2) \quad \mbox{in $C^1$-sense},
\end{equation}
where $H(x,y) $ is the regular part of the Green's function of the Dirichlet Laplacian in $\Omega$, $G(x,y)=\frac{1}{2\pi}\log \frac{1}{|x-y|}+H(x,y)$.

Let $k\geq 1$, fix ${\bm\xi}\in \mathcal{F}_k\Omega$ and consider $z(x, {\bm\xi})$ the unique solution to (\ref{equationofz}). The approximate solutions we will use are given by
\begin{equation}\label{approximation}
W=z(x, {\bm\xi})-\sum_{i=1}^k Pw_i(x), \quad w_i(x)=w_{\delta_i,\xi_i}(x),
\end{equation}
where the parameters $\delta_i$ are suitably chosen such that 
\begin{equation}\label{delta}
8\delta_i^2=\lambda d_i({\bm\xi}), \quad d_i({\bm\xi})=\exp\Big[8\pi (H(\xi_i,\xi_i)+\sum_{j\neq i} G(\xi_i,\xi_j))-z(\xi_i,{\bm\xi})  \Big].
\end{equation}
Our aim is to find a solution $u$  to (\ref{mainproblem}) of the form $u=W+\phi$ where $\phi $ is small in some sense. Before we go further, let us first collect some useful well-known facts. 

Any solution $\psi\in H$ of
\begin{equation*}
\Delta \psi+e^{w_{\delta, \xi}}\psi=0\quad \mathrm{ in }\quad \R^2,
\end{equation*}
can be expressed as a linear combination of
\begin{equation*}
Z_{\delta, \xi}^0(x)=\frac{\delta^2-|x-\xi|^2}{\delta^2+|x-\xi|^2}, \quad Z_{\delta,\xi}^i(x)=\frac{x_i-\xi_i}{\delta^2+|x-\xi|^2}, \ i=1,2.
\end{equation*}
Moreover, the projections of $Z_{\delta, \xi}^i$ have the following expansion:
\begin{equation}\label{pz}
PZ_{\delta,\xi}^0(x)=Z_{\delta, \xi}^0(x)+1+O(\delta^2),\quad PZ_{\delta, \xi}^i(x)=Z_{\delta, \xi}^i(x)+O(1), \ i=1,2 \quad \mbox{in $C^1$-sense}.
\end{equation}
Finally, by straightforward computations and taking into account the choice of $\lambda$ in \eqref{delta} the following estimates hold true.
\begin{lemma}
\label{le3.3}
For any $\mathcal{C}\subset \mathcal{F}_k\Omega$ compact and ${\bm\xi}\in \mathcal{C}$, one has
\begin{equation*}
\|Pw_i\|=O(|\log \lambda|^{\frac{1}{2}}),\quad \|\nabla_{\bm\xi}Pw_i\|=O(\lambda^{-\frac{1}{2}}),
\end{equation*}
\begin{equation*}
\|W\|=O(|\log \lambda|^{\frac{1}{2}}),\quad \|\nabla_{\bm\xi}W\|=O(\lambda^{-\frac{1}{2}}),
\end{equation*}
and there exists some $a>0$ such that for any $i=1,\cdots,k$ and $j=1,2$, it holds that
\begin{equation}\label{estimateofkernel}
\|PZ_i^j\|=a\lambda^{-\frac{1}{2}}(1+o(1)) ,\quad \|\nabla_{\bm\xi}PZ_i^j\|=O\left(\frac{1}{\lambda}\right),
\end{equation}
and
\begin{equation}\label{estimateofkernel1}
\langle PZ_i^j, PZ_l^k\rangle =o\left(\frac{1}{\lambda}\right)\quad \mbox{ if }\quad i\neq l \mbox{ or }j\neq k.
\end{equation}
\end{lemma}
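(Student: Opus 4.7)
My plan is to reduce each quantity to an integral of the form $\int_\Omega e^{w_i}\,(\cdot)\,dx$ via integration by parts (using $-\Delta Pw_i=e^{w_i}$ and $-\Delta PZ_i^j=e^{w_i}Z_i^j$), and then to evaluate it through the rescaling $y=(x-\xi_i)/\delta_i$, which turns the concentrating factor $e^{w_i}\,dx$ into the fixed measure $\tfrac{8}{(1+|y|^2)^2}\,dy$ on $\R^2$ up to exponentially small tails. Uniformity on the compact set $\mathcal{C}$ will follow from smoothness of $\bm\xi\mapsto z(\cdot,\bm\xi)$, of $H$ and of $G$, together with the choice \eqref{delta} which gives $\delta_i^2\asymp\lambda$ uniformly.

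For $\|Pw_i\|$ I would use $\|Pw_i\|^2=\int_\Omega e^{w_i}Pw_i\,dx$. Inserting the expansion \eqref{bubble-projection} and rescaling, the dominant contribution comes from the $w_i$ piece and equals $-8\pi\log\delta_i^2+O(1)$, while the constant and $H(\cdot,\xi_i)$ pieces, paired against $\int e^{w_i}\,dx\to 8\pi$, contribute $O(|\log\lambda|)$ and $O(1)$ respectively. This yields $\|Pw_i\|=O(|\log\lambda|^{1/2})$, and the bound on $\|W\|$ follows by the triangle inequality together with the uniform $H^1_0$-bound on $z(\cdot,\bm\xi)$ on $\mathcal{C}$.

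For the $\bm\xi$-derivatives I would exploit $\partial_{\xi_i^\alpha}w_{\delta_i,\xi_i}=4\,Z_i^\alpha$ and $\partial_{\delta}w_{\delta,\xi}=-\tfrac{2}{\delta}Z^0$, combined with $\partial_{\bm\xi}\delta_i=O(\delta_i)$ coming from implicit differentiation of \eqref{delta}. Hence $\partial_{\bm\xi}Pw_i$ is dominated by $PZ_i^\alpha$-type terms, reducing matters to the next bound. To estimate $\|PZ_i^j\|$ for $j\in\{1,2\}$, integration by parts gives $\|PZ_i^j\|^2=\int e^{w_i}Z_i^j\,PZ_i^j\,dx$, and substituting $PZ_i^j=Z_i^j+O(1)$ from \eqref{pz} and rescaling I would obtain
\[
\|PZ_i^j\|^2=\frac{1}{\delta_i^2}\int_{\R^2}\frac{8\,y_j^2}{(1+|y|^2)^4}\,dy+O(1)=\frac{2\pi}{3\,\delta_i^2}\bigl(1+o(1)\bigr),
\]
which together with $\delta_i^2\asymp\lambda$ yields the first half of \eqref{estimateofkernel}. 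One more $\bm\xi$-differentiation adds an extra factor $\delta_i^{-1}$ in the rescaled integrand, producing $\|\nabla_{\bm\xi}PZ_i^j\|=O(\lambda^{-1})$.

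Finally, for the near-orthogonality \eqref{estimateofkernel1}, integration by parts gives $\l PZ_i^j,PZ_l^k\r=\int e^{w_i}Z_i^j\,PZ_l^k\,dx$. If $i\neq l$, I would Taylor-expand $PZ_l^k$ near $\xi_i$: the constant term vanishes because $\int e^{w_i}Z_i^j\,dx=0$ by odd symmetry, while the $O(|x-\xi_i|)$ remainder yields $O(1)=o(\lambda^{-1})$. If $i=l$ and $j\neq k$ with $j,k\in\{1,2\}$, I would use $PZ_i^k=Z_i^k+O(1)$; the leading rescaled integral $\int_{\R^2}y_jy_k/(1+|y|^2)^4\,dy$ vanishes by parity, and the remainder is again $o(\lambda^{-1})$. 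The main technical obstacle will be precisely this bookkeeping: since $\delta_i$ depends on $\bm\xi$ through \eqref{delta} and the normalizations \eqref{bubble-projection}, \eqref{pz} hold only up to $O(\delta^2)$ and $O(1)$ tails, every $\bm\xi$-differentiation mixes translation and scaling derivatives, and one must check throughout that the mixed contributions remain strictly subdominant to the claimed powers of $\lambda$ and $|\log\lambda|$.
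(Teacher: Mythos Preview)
Your proposal is correct and is precisely the ``straightforward computation'' the paper has in mind; in fact the paper omits the proof entirely, stating only that the estimates follow by direct calculation together with the choice \eqref{delta}. One minor correction: in your expansion of $\|PZ_i^j\|^2$ the cross term $\int_\Omega e^{w_i}Z_i^j\cdot O(1)\,dx$ is $O(\delta_i^{-1})$ rather than $O(1)$, but since this is still $o(\delta_i^{-2})$ your asymptotic $\|PZ_i^j\|^2=\tfrac{2\pi}{3\delta_i^2}(1+o(1))$ remains valid.
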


\smallskip


\subsection{Estimate of the error}\label{sec3.2}
We next estimate the error of the approximate solution:
\begin{equation*}
R=\Delta W+\rho^+ \frac{e^W}{\int_\Omega e^W}-\lambda e^{-W}.
\end{equation*}

\begin{lemma}\label{estimateoferror}
For any $p\geq 1$ we have, for ${\bm\xi}\in \mathcal{C}\subset \mathcal{F}_k\Omega$, $\mathcal{C}$ compact, 
$$
\|R\|_p=O(\lambda^{\frac{2-p}{2p}}), \quad\|\partial_{\bm\xi}R\|_p=O(\lambda^{\frac{1-p}{p}}).
$$
\end{lemma}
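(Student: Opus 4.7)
The plan is to write $R$ as a sum of two differences, each of which can be controlled by the properties of $W$ and by the precise choice of $\delta_i$ in \eqref{delta}.

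First I would use the equation \eqref{equationofz} satisfied by $z$, namely $\Delta z = -\rho^+ h e^z / \int_\Omega h e^z\,dx$, together with the identity $\Delta Pw_i = \Delta w_i = -e^{w_i}$, to rewrite
\begin{equation*}
R \;=\; \rho^+\!\left[\frac{e^{W}}{\int_\Omega e^W\,dx}-\frac{h e^{z}}{\int_\Omega h e^z\,dx}\right] \;+\; \left[\sum_{i=1}^k e^{w_i}-\lambda e^{-W}\right].
\end{equation*}
For the first bracket I would expand $e^{-\sum Pw_i}$ using \eqref{bubble-projection} to obtain
$e^{-\sum Pw_i} = \prod_i (\delta_i^2+|x-\xi_i|^2)^2 \cdot e^{-8\pi\sum_i H(x,\xi_i)}(1+O(\delta^2))$,
and then compare with $h(x,\bm\xi) = \prod_i |x-\xi_i|^4 e^{-8\pi\sum_i H(x,\xi_i)}$. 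The difference of the products equals $\delta_i^2 \cdot (\text{bounded factors})$ plus an $O(\delta^2) h e^z$ remainder, which yields $\|e^W - h e^z\|_p = O(\lambda)$ and $\bigl|\int e^W - \int h e^z\bigr| = O(\lambda)$. Since $\int h e^z\,dx$ is uniformly bounded away from $0$ on $\mathcal C$, the first bracket is $O(\lambda)$ in $L^p$, which is better than the claimed bound.

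The heart of the proof is the estimate of the second bracket. Using \eqref{bubble-projection} once more,
\begin{equation*}
\lambda e^{-W} \;=\; \lambda e^{-z}\prod_i\frac{e^{w_i}}{8\delta_i^2}\,e^{8\pi H(x,\xi_i)}\bigl(1+O(\delta^2)\bigr)
\;=\; \frac{\lambda\, F(x)}{\prod_i(\delta_i^2+|x-\xi_i|^2)^2}\bigl(1+O(\delta^2)\bigr),
\end{equation*}
where $F(x)$ is a smooth combination of $e^{-z}$, $e^{8\pi H(x,\xi_j)}$ and $|x-\xi_j|^{-4}$ for $j\neq i$ (away from $\xi_i$). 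The definition of $d_i(\bm\xi)$ in \eqref{delta} is precisely chosen so that $\lambda d_i = 8\delta_i^2 = \lambda F(\xi_i)$, which gives the key cancellation
\begin{equation*}
\lambda e^{-W}(x) \;=\; e^{w_i}(x)\cdot\frac{F(x)}{F(\xi_i)}\bigl(1+O(\delta^2)\bigr)
\;=\; e^{w_i}(x)\bigl(1+O(|x-\xi_i|)+O(\delta^2)\bigr)\quad\text{near }\xi_i,
\end{equation*}
since $F$ is smooth. As $\sum_j e^{w_j} = e^{w_i} + O(\delta^2)$ on $B_{r_0}(\xi_i)$, on each such ball I find
$\bigl|\sum_j e^{w_j} - \lambda e^{-W}\bigr| \le C\, e^{w_i}|x-\xi_i| + C\delta^2 e^{w_i} + O(\delta^2)$,
while outside $\bigcup_i B_{r_0}(\xi_i)$ both terms are pointwise $O(\delta^2)$. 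A change of variable $y=(x-\xi_i)/\delta_i$ gives the standard concentration estimate $\|e^{w_i}|x-\xi_i|\|_p = O(\delta^{(2-p)/p}) = O(\lambda^{(2-p)/(2p)})$, which is the dominant term and yields the first claim.

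For the derivative bound I would differentiate the same decomposition in $\bm\xi$. The $\bm\xi$-derivatives act on $z(\cdot,\bm\xi)$, on the basepoints $\xi_i$ appearing in $w_i$ and $H(x,\xi_i)$, and on $\delta_i(\bm\xi)$ defined through \eqref{delta}; each differentiation of the concentrating bubble $e^{w_i}$ produces an extra factor of order $\delta_i^{-1}=O(\lambda^{-1/2})$, while all other factors remain $C^1$-bounded on $\mathcal C$. Repeating the previous arguments, the same cancellation persists up to an extra $O(\lambda^{-1/2})$, giving $\|\partial_{\bm\xi} R\|_p = O(\lambda^{-1/2}\cdot\lambda^{(2-p)/(2p)}) = O(\lambda^{(1-p)/p})$. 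The main obstacle is the sharp cancellation in the second bracket: one must extract the factor $|x-\xi_i|$ in the $(1+\epsilon_i)$ correction to $\lambda e^{-W}/e^{w_i}$ near $\xi_i$, since the naive bound via $\|e^{w_i}\|_p$ alone would lose a full power of $\delta$ and miss the stated estimate.
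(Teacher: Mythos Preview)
Your proposal is correct and follows essentially the same route as the paper: the same decomposition into the two brackets (the paper's $E_1$ and $E_2$), the $O(\lambda)$ bound on the $e^W$-bracket via \eqref{bubble-projection}, and the crucial cancellation $\lambda e^{-W}=e^{w_i}(1+O(|x-\xi_i|)+O(\lambda))$ near each $\xi_i$ coming from the choice \eqref{delta}, which yields the dominant contribution $\|e^{w_i}|x-\xi_i|\|_p=O(\lambda^{(2-p)/(2p)})$. For the derivative the paper is slightly more explicit---it splits $\partial_{\bm\xi}E_1$ into four pieces, one of which is precisely $E_1\cdot\partial_{\bm\xi}Pw_\ell$ (your ``cancellation persists'' term) while the others are controlled directly by $\|e^{w_i}\|_p=O(\lambda^{(1-p)/p})$---but your heuristic $\lambda^{-1/2}\cdot\lambda^{(2-p)/(2p)}=\lambda^{(1-p)/p}$ captures exactly the right scaling.
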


\begin{proof}
By the definition of $W$,
\begin{equation*}
\begin{split}
R&=\Delta W+\rho^+ \frac{e^W}{\int_\Omega e^W}-\lambda e^{-W}\\
&=\Delta(z(x,{\bm\xi})-\sum_i Pw_i)+\rho^+\frac{e^{z(x,{\bm\xi})-\sum_i Pw_i}}{\int_\Omega e^{z(x,{\bm\xi})-\sum_i Pw_i}}-\lambda e^{\sum_i Pw_i-z(x,{\bm\xi})}\\
&=\Big(\sum_i e^{w_i}-\lambda e^{\sum_i Pw_i-z(x,{\bm\xi})}\Big)+\Big(\Delta z(x,{\bm\xi})+\rho^+\frac{e^{z(x,{\bm\xi})-\sum_i Pw_i}}{\int_\Omega e^{z(x,{\bm\xi})-\sum_i Pw_i}}\Big)\\
&:=E_1(x)+E_2(x).
\end{split}
\end{equation*}

\smallskip

\noindent {\bf Estimate of $E_1=\Big(\sum_i e^{w_i}-\lambda e^{\sum_i Pw_i-z(x,{\bm\xi})}\Big)$.} Take $\eta>0$ such that $|\xi_i-\xi_j|\geq2\eta$ and $d(\xi_i,\partial\Omega)\geq2\eta$.  First, using (\ref{bubble-projection}), we have
\begin{equation*}
\begin{split}
W&=z(x,{\bm\xi})-\sum_i Pw_i=z(x,{\bm\xi})-\sum_i \Big[ \log \frac{1}{(\delta_i^2+|x-\xi_i|^2)^2}+8\pi H(x, \xi_i)\Big]+O(\lambda).
\end{split}
\end{equation*}
Hence, on $B_{\eta}(\xi_i)$, writing $x=\xi_i+\delta_i y$, one has
\begin{equation*}
\begin{split}
e^{-W(x)}&=e^{\sum_{i=1}^k \Big[ \log \frac{1}{(\delta_i^2+|x-\xi_i|^2)^2}+8\pi H(x, \xi_i)\Big]-z(x, {\bm\xi})}(1+O(\lambda))\\
&=e^{w(y)}\cdot \exp\Big(8\pi H(\xi_i,\xi_i)+\sum_{j\neq i}8\pi G(\xi_i,\xi_j)-4\log \delta_i-\log 8-z(\xi_i,{\bm\xi})  \Big)(1+O(\lambda)+O(\delta_i|y|))\\
&=\frac{d_i({\bm\xi})}{8\delta_i^4}e^{w(y)}
(1+O(\lambda)+O(\delta_i|y|)).
\end{split}
\end{equation*}
Thus
\begin{equation}\label{eq1}
\begin{split}
e^{w_i}-\lambda e^{-W(x)}&=\frac{8}{\delta_i^2(1+|y|^2)^2}\left[1-\frac{\lambda}{8\delta_i^2}d_i({\bm\xi})+O(\lambda)+O(\delta_i|y|)\right]\\
&=O\left(\frac{1}{(1+|y|^2)^2}\right)+O\left(\frac{|y|}{\lambda^{\frac{1}{2}}(1+|y|^2)^2}\right).
\end{split}
\end{equation}
It follows that
\begin{equation*}
\|e^{w_i}-\lambda e^{-W(x)}\|_{L^p(B(\xi_i,\eta))}=O(\lambda^{\frac{2-p}{2p}})~\mbox{ for \ any }p\geq 1.
\end{equation*}
Moreover,
\begin{equation*}
\|e^{w_j}\|_{L^\infty(B(\xi_i,\eta))}=O(\lambda) \mbox{ for } j\neq i \ \mbox{ and } \ \|e^{w_i}-\lambda e^{-W(x)}\|_{L^\infty(\Omega\setminus \cup_i B(\xi_i,\eta))}=O(\lambda).
\end{equation*}
Combining the above estimates,
\begin{equation}\label{estimateofe1}
\|E_1\|_{p}=O(\lambda^{\frac{2-p}{2p}}) \mbox{ for }p\geq 1.
\end{equation}

\smallskip

\noindent {\bf Estimate of $E_2=\Big(\Delta z(x,{\bm\xi})+\rho^+\frac{e^{z(x,{\bm\xi})-\sum_i Pw_i}}{\int_\Omega e^{z(x,{\bm\xi})-\sum_i Pw_i}}\Big)$.} First of all,
\begin{equation}\label{estimateofw}
\begin{split}
W&=z(x,{\bm\xi})-\sum_i Pw_i\\
&=z(x,{\bm\xi})+2\sum_i \log (\delta_i^2+|x-\xi_i|^2)-8\pi\sum_i H(x,\xi_i)+O(\lambda)\\
&=\log h(x, {\bm\xi})+z(x,{\bm\xi})+2\sum_i\log\frac{\delta_i^2+|x-\xi_i|^2}{|x-\xi_i|^2}+O(\lambda),
\end{split}
\end{equation}
where
\begin{equation*}
h(x, {\bm\xi})=\prod_{i=1}^k |x-\xi_i|^4\exp[-8\pi H(x, \xi_i)]
=\prod_{i=1}^k\exp\Big(-8\pi G(x,\xi_i)\Big).
\end{equation*}
So
\begin{equation}\label{eq2}
e^W=h(x,{\bm\xi})e^{z(x,{\bm\xi})}+O(\lambda).
\end{equation}
One has
\begin{equation*}
E_2=\Delta z(x,{\bm\xi})+\rho^+\frac{e^W}{\int_{\Omega} e^W}=\Delta z(x, {\bm\xi})+\rho^+\frac{h(x,{\bm\xi})e^{z(x,{\bm\xi})}}{\int_{\Omega}h(x,{\bm\xi})e^{z(x,{\bm\xi})}}+O(\lambda)
=O(\lambda),
\end{equation*}
since $z(x,{\bm\xi})$ is a solution of (\ref{equationofz}). Thus
\begin{equation}\label{estimateofe2}
\|E_2\|_{\infty}=O(\lambda).
\end{equation}

\smallskip

\noindent{\bf Derivative of $E_1$. }Next we consider the derivatives. By straightforward computations we get
\begin{equation*}
\begin{split}
\partial_{\xi_i^j}E_1=~&\sum_\ell e^{w_\ell}\partial_{\xi_i^j}w_{\ell}+\lambda e^{-W}\partial_{\xi_i^j}W\\
=~&\lambda e^{-W}\partial_{\xi_i^j} z(x, {\bm\xi})+(\sum_i e^{w_i}-\lambda e^{-W})\sum_{\ell=1}^k\partial_{\xi_i^j}Pw_\ell -\sum_{\ell}e^{w_\ell}\partial_{\xi_i^j}(Pw_\ell-w_\ell)-\sum_{\ell\neq i}e^{w^i}\partial_{\xi_i^j}Pw_\ell\\
:=~&I_1+I_2+I_3+I_4.
\end{split}
\end{equation*}
It is then not difficult to show that
\begin{equation*}
\begin{split}
\|I_1\|_p&\leq \|E_1\|_p+\sum_i\|e^{w_i}\|_p=O(\lambda^{\frac{1-p}{p}}),\\
\|I_2\|_p&\leq \|E_1\|_p\|\partial_{\bm\xi}Pw_j\|_\infty=O(\lambda^{\frac{1-p}{p}}),\\
\|I_3\|_p&\leq \|e^{w_i}\|_p \|\partial_{\bm\xi}(Pw_j-w_j)\|_\infty=O(\lambda^{\frac{1-p}{p}}),\\
\|I_4\|_p&=0.
\end{split}
\end{equation*}
Combining all the above estimates,
\begin{equation}\label{derivativeofe1}
\|\partial_{\bm\xi}E_1\|_p=O(\lambda^{\frac{1-p}{p}}).
\end{equation}

\medskip

\noindent{\bf Derivative of $E_2$. }The estimate of the derivative of $E_2$ is analogous. Using the equation satisfied by $z(x,{\bm\xi})$ in (\ref{equationofz}) and (\ref{estimateofw}),
\begin{equation*}
\begin{split}
\frac{1}{\rho^+}\partial_{\xi_i^j}E_2=~&-\frac{(\partial_{\xi_i^j}z(x, {\bm\xi})h+\partial_{\xi_i^j}h)e^{z(x,{\bm\xi})}}{\int_{\Omega}he^{z(x, {\bm\xi})}dx}+\frac{he^{z(x, {\bm\xi})}\int_{\Omega}(\partial_{\xi_i^j}z(x, {\bm\xi})h+\partial_{\xi_i^j}h)e^{z(x,{\bm\xi})}}{(\int_{\Omega}he^{z(x, {\bm\xi})}dx)^2}\\
&+\frac{e^W\partial_{\xi_i^j}W}{\int_{\Omega}e^W}-\frac{e^W\int_{\Omega}e^W\partial_{\xi_i^j}Wdx}{(\int_\Omega e^W)^2}\\
=~&O(\lambda).
\end{split}
\end{equation*}
Thus we have
\begin{equation}\label{derivativeofe2}
\|\partial_{\bm\xi}E_2\|_\infty=O(\lambda).
\end{equation}
Finally, combining the estimates for $E_1$ and $E_2$, we have
\begin{equation*}
\|R\|_p=O(\lambda^{\frac{2-p}{2p}}), \quad \|\partial_{\bm\xi}R\|_p=O(\lambda^{\frac{1-p}{p}}).
\end{equation*}
\end{proof}

\subsection{The linear operator}\label{sec3.3}
In this subsection, we consider the following problem:  given $h\in H_0^1(\Omega)$ we look for a function $\phi\in H_0^1(\Omega)$ and $c_{ij}$ such that
\begin{equation}\label{linearproblem}
\begin{cases}
\Delta \phi+\rho^+\left(\dfrac{e^W\phi}{\int_\Omega e^Wdx}-\dfrac{e^W\int_\Omega e^W\phi dx}{(\int_\Omega e^Wdx)^2}\right)+\sum_{i=1}^k e^{w_i}\phi=\Delta h+\sum_{i,j}c_{ij}e^{w_i}Z_i^j,\\
\\
\int_{\Omega}\nabla \phi \nabla PZ_i^j dx=0,~j=1,2,~i=1,\cdots,k.
\end{cases}
\end{equation}

First we have the following apriori estimate:
\begin{lemma}\label{aprioriestimate}
Let $\mathcal{C}\subset \mathcal{F}_k\Omega$ be a fixed compact set. Then, there exist $\lambda_0>0$ and $C>0$ such that for any $\lambda\in (0,\lambda_0)$, ${\bm\xi}\in \mathcal{C}$ and $h\in H_0^1(\Omega)$, any solution $\phi\in H_0^1(\Omega)$ of
\begin{equation}\label{linearproblem1}
\begin{cases}
\Delta \phi+\rho^+\left(\dfrac{e^W\phi}{\int_\Omega e^Wdx}-\dfrac{e^W\int_\Omega e^W\phi dx}{(\int_\Omega e^Wdx)^2} \right)+\sum_{i=1}^k e^{w_i}\phi=\Delta h,\\
\\
\int_{\Omega}\nabla \phi \nabla PZ_i^j dx=0,~j=1,2,~i=1,\cdots,k,
\end{cases}
\end{equation}
satisfies
\begin{equation*}
\|\phi\|\leq C|\log \lambda| \|h\|.
\end{equation*}
\end{lemma}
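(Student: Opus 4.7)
\smallskip

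\emph{Proof plan.} The plan is to argue by contradiction: assume there exist sequences $\lambda_n\to 0$, ${\bm\xi}_n\in\mathcal{C}$, $h_n\in H_0^1(\Omega)$ and solutions $\phi_n$ of \eqref{linearproblem1} with $\|\phi_n\|=1$ and $|\log\lambda_n|\|h_n\|\to 0$. Up to subsequence, ${\bm\xi}_n\to{\bm\xi}_\ast\in\overline{\mathcal{C}}\subset\mathcal{F}_k\Omega$. The goal will be to show $\phi_n\to 0$ in $H^1_0(\Omega)$, the desired contradiction.

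\smallskip

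\emph{Local analysis near each bubble.} For each $i=1,\dots,k$ I would rescale $\tilde\phi_{i,n}(y):=\phi_n(\xi_{i,n}+\delta_{i,n}y)$. The normalization $\|\phi_n\|=1$ gives $\|\nabla\tilde\phi_{i,n}\|_{L^2(\R^2)}\le 1$, and combined with elliptic regularity applied to the rescaled equation this yields boundedness in $H=H_2(\R^2)$; by the compact embedding $H\hookrightarrow L$ a subsequence satisfies $\tilde\phi_{i,n}\rightharpoonup\psi_i$ in $H$. Using $e^{w_{i,n}}(\xi_{i,n}+\delta_{i,n}y)=\delta_{i,n}^{-2}e^{w(y)}$ and noting that all remaining rescaled terms carry a factor $\delta_{i,n}^2$ and are negligible, the limit $\psi_i$ solves $\Delta\psi_i+e^w\psi_i=0$ in $\R^2$, hence $\psi_i=c_i^0 Z^0+c_i^1 Z^1+c_i^2 Z^2$. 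Passing the orthogonality $\int\nabla\phi_n\nabla PZ_{i,n}^j=0$ for $j=1,2$ to the limit via \eqref{pz} and the scaling $Z^j_{\delta,\xi}(x)=\delta^{-1}Z^j((x-\xi)/\delta)$, one finds $\int_{\R^2}\nabla\psi_i\nabla Z^j\,dy=0$ for $j=1,2$, which forces $c_i^1=c_i^2=0$, so that $\psi_i=c_i^0 Z^0$.

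\smallskip

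\emph{Global limit.} Along a subsequence $\phi_n\rightharpoonup\phi_\ast$ weakly in $H^1_0(\Omega)$. Passing to the limit in the sense of distributions in \eqref{linearproblem1}: away from $\{\xi_i^\ast\}$, \eqref{eq2} gives $e^{W_n}\to h(\cdot,{\bm\xi}_\ast)e^{z(\cdot,{\bm\xi}_\ast)}$, while the concentrating terms $\sum_i e^{w_{i,n}}\phi_n$ converge weakly to the measure $\sum_i\bigl(\int_{\R^2}e^w\psi_i\,dy\bigr)\delta_{\xi_i^\ast}$. Since $\psi_i=c_i^0 Z^0$ and a direct computation gives $\int_{\R^2}e^w Z^0\,dy=0$, this Dirac contribution vanishes. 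Therefore $\phi_\ast$ solves the linearization of \eqref{equationofz} at $z(\cdot,{\bm\xi}_\ast)$, and the non-degeneracy hypothesis on $z$ forces $\phi_\ast\equiv 0$; standard elliptic regularity then yields $\phi_n\to 0$ locally uniformly on $\Omega\setminus\{\xi_1^\ast,\ldots,\xi_k^\ast\}$.

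\smallskip

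\emph{Dilation modes — the main obstacle.} The hardest step is to show $c_i^0=0$. Since $\delta_i$ is prescribed by \eqref{delta} and not treated as a free parameter, the orthogonality conditions in \eqref{linearproblem1} do \emph{not} include $PZ_{i,n}^0$, so these coefficients cannot be killed by orthogonality and must be estimated a posteriori. The strategy is to test \eqref{linearproblem1} against $PZ_{i,n}^0$ and exploit the near-kernel identity $\Delta PZ_{i,n}^0+e^{w_{i,n}}PZ_{i,n}^0=e^{w_{i,n}}(1+O(\delta_{i,n}^2))$ together with $\int_{\R^2}e^w Z^0=0$, which makes the mean-field contribution and the cross terms $\sum_{j\neq i}e^{w_{j,n}}PZ_{i,n}^0$ of lower order. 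A careful expansion of the resulting identity produces the bound $|c_i^0|\le C|\log\lambda_n|\bigl(\|h_n\|+o(1)\bigr)$, the logarithmic factor reflecting the near-degeneracy of the linearized operator along the unconstrained dilation direction. Combined with $|\log\lambda_n|\|h_n\|\to 0$ this yields $c_i^0\to 0$, hence $\|\phi_n\|\to 0$ and the contradiction. Establishing this $|\log\lambda|$ estimate, and tracking why the logarithmic loss is unavoidable here — in contrast with the standard Liouville setting, where all three kernel modes $Z^0,Z^1,Z^2$ are orthogonalized out and one gets $\|\phi\|\le C\|h\|$ — is the main technical difficulty of the lemma.
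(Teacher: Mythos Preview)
Your overall architecture (contradiction, rescaling, identifying the blow-up limit $\psi_i=c_i^0 Z^0$, global limit $\phi_\ast=0$, then killing the dilation modes) matches the paper's. The genuine gap is in the last step.

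Testing \eqref{linearproblem1} against $PZ_{i,n}^0$ alone cannot yield a bound on $c_i^0$. The reason is the very identity you invoke: $\int_{\R^2}e^w Z^0\,dy=0$. After integrating by parts, the $PZ_i^0$-test reduces (using \eqref{pz}) to
\[
\int_{\tilde\Omega_i}e^w\tilde\phi_{i,n}\,dy \;=\; O(\|h_n\|)+O(\lambda_n^{1/p}),
\]
i.e.\ it controls the \emph{weighted average} $\int e^w\tilde\phi_{i,n}$, not the coefficient $c_i^0$. Since $\tilde\phi_{i,n}\to c_i^0 Z^0$ in $L(\R^2)$ and $\int e^w Z^0=0$, this average tends to zero regardless of $c_i^0$, so the test is blind to the dilation mode. (Separately, your stated bound $|c_i^0|\le C|\log\lambda_n|(\|h_n\|+o(1))$ would not even give $c_i^0\to 0$, since $|\log\lambda_n|\cdot o(1)$ need not vanish.)

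What the paper does is a two-test argument. First, the $PZ_i^0$-test combined with the contradiction hypothesis $|\log\lambda_n|\,\|h_n\|\to 0$ gives the \emph{refined rate}
\[
|\log\lambda_n|\int_{\tilde\Omega_i}e^w\tilde\phi_{i,n}\,dy \;\longrightarrow\; 0.
\]
Second, one tests against $Pw_i$. The expansion \eqref{bubble-projection} produces, after rescaling, a divergent coefficient $-4\log\delta_{i,n}$ multiplying $\int e^w\tilde\phi_{i,n}$ (this is where the refined rate is consumed) together with the term
\[
-2\int_{\R^2}e^w\,\tilde\phi_{i,n}\log(1+|y|^2)\,dy \;\longrightarrow\; -2c_i^0\int_{\R^2}e^w Z^0\log(1+|y|^2)\,dy\neq 0,
\]
which finally isolates $c_i^0$ and forces $c_i^0=0$. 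This second test function is the missing ingredient in your plan; the logarithmic weight $\log(1+|y|^2)$ coming from $Pw_i$ is exactly what breaks the $e^w$-orthogonality to $Z^0$.
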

\begin{proof}
We prove it by contradiction. Assume there exist $\lambda_n\to 0$, ${\bm\xi}_n\to {\bm\xi}^*\in \mathcal{F}_k \Omega$, $h_n\in H_0^1(\Omega)$ and $\phi_n \in H_0^1(\Omega)$ which solves (\ref{linearproblem1}) with
\begin{equation*}
\|\phi_n\|=1, \ |\log \lambda_n|\|h_n\|\to 0 \mbox{ as }n\to \infty.
\end{equation*}
For $i=1, \cdots, k$, define $\tilde{\phi}_i(y)$ as
\begin{equation*}
\begin{split}
\tilde{\phi}_i(y)=
\begin{cases}
\phi_i(\delta_i y+\xi_i), \quad &y\in \tilde{\Omega}_i=\frac{\Omega-\xi_i}{\delta_i},\\
0, &y\in \R^2\setminus \tilde{\Omega}_i.
\end{cases}
\end{split}
\end{equation*}

\noindent {\bf Step 1. } We claim that
\begin{equation}\label{limit1}
\tilde{\phi}_i(y)\to \gamma_i\frac{1-|y|^2}{1+|y|^2} \mbox{ weakly \ in\  $H(\R^2)$ \ and \ strongly \ in \ $L(\R^2)$},
\end{equation}
and
\begin{equation}\label{limit2}
\phi\to 0 \mbox{ weakly \ in \ $H_0^1(\Omega)$\ and \ strongly \ in \ $L^q(\Omega)$ \ for $q\geq 2$}.
\end{equation}
Let $\psi\in C_0^\infty(\Omega\setminus \{\xi^*_1, \cdots,\xi_k^*\})$, multiply equation (\ref{linearproblem1}) by $\psi$ and integrate, then
\begin{equation*}
\begin{aligned}
&-\int_{\Omega}\nabla \psi \nabla \phi+\sum_{i=1}^k\int_{\Omega}e^{w_i}\phi\psi dx
+\rho^+\left(\dfrac{\int_{\Omega}e^W\phi\psi dx}{\int_{\Omega}e^Wdx}-\dfrac{\int_{\Omega}e^W\phi dx
\int_{\Omega} e^W\psi dx}{(\int_{\Omega}e^Wdx)^2}\right)\\
&=\int_{\Omega }\Delta h \psi dx.
\end{aligned}
\end{equation*}
By the assumption on $\phi$, using the fact that in $\Omega\setminus\{\xi_1^*,\cdots, \xi_k^*\}, \ e^{w_i}=O(\lambda)$ and $e^W=h(x, {\bm\xi})e^{z(x,{\bm\xi})}+O(\lambda)$, one has
\begin{equation*}
\phi \to \phi^* \mbox{ weakly \ in \ $H_0^1(\Omega)$ \ and \ strongly \ in \ $L^q(\Omega)$ \ for \ $q\geq 2$},
\end{equation*}
which gives
\begin{equation*}
-\int_{\Omega}\nabla \phi^* \nabla \psi dx+\rho^+\left( \dfrac{\int_{\Omega}he^z\phi^*\psi dx}{\int_{\Omega} he^zdx}-\dfrac{\int_{\Omega}he^z\psi dx\int_{\Omega}he^z\phi^*dx}{(\int_{\Omega}he^zdx)^2}\right)=0.
\end{equation*}
So $\|\phi^*\|_{H_0^1(\Omega)}\leq 1$ and it solves
\begin{equation*}
\Delta \phi^*+\rho^+\left(\dfrac{he^z\phi^*}{\int_{\Omega}he^zdx}
-\dfrac{he^z\int_{\Omega}he^z\phi^*dx}{(\int_{\Omega} he^zdx)^2}\right)=0.
\end{equation*}
By the non-degeneracy of $z(x, {\bm\xi})$, we can get that $\phi^*=0$.  Thus (\ref{limit2}) is proved.

Now let us prove (\ref{limit1}). Multiplying (\ref{linearproblem1}) again by $\phi$ and integrating,
\begin{equation*}
\int_{\Omega}|\nabla \phi|^2dx-\sum_{i=1}^k\int_{\Omega}e^{w_i}\phi^2dx-\rho^
+\left(\dfrac{\int_{\Omega}e^W\phi^2dx}{\int_{\Omega}e^Wdx}
-\dfrac{(\int_{\Omega}e^W\phi dx)^2}{(\int_{\Omega}e^Wdx)^2} \right)
=\int_{\Omega}\nabla h\nabla \phi dx.
\end{equation*}
From the above equation, one can get that
\begin{equation*}
\begin{split}
\int_{\tilde{\Omega}_i}e^w \tilde{\phi}_i^2dx&=\int_{\Omega}e^{w_i}\phi^2dx\\
&\leq \int_{\Omega}|\nabla\phi|^2dx-\rho^+\left(\dfrac{\int_{\Omega}e^W\phi^2dx}{\int_{\Omega}e^Wdx}
-\dfrac{(\int_{\Omega}e^W\phi dx)^2}{(\int_{\Omega}e^Wdx)^2}\right)-\int_{\Omega}\nabla h\nabla \phi dx\\
&\leq 1+o(1)+\|h\|=O(1)
\end{split}
\end{equation*}
where we used (\ref{limit2}).  So we get that $\tilde{\phi}_i$ is bounded in $H(\R^2)$. There exists $\tilde{\phi}_0$ such that
\begin{equation*}
\tilde{\phi}_i \to \tilde{\phi}_0 \mbox{ weakly \ in \ $H(\R^2)$ \ and \ strongly \ in \ $L(\R^2)$}.
\end{equation*}
Let $\tilde{\psi}\in C_0^\infty(\R^2)$ and define $\psi_i=\tilde{\psi}(\frac{x-\xi_i}{\delta_i})$. Multiplying (\ref{linearproblem1}) by $\psi_i$ and integrating over $\Omega$,
\begin{equation}\label{psi_i}
\begin{aligned}
&\int_{\Omega} \nabla \phi\nabla \psi_idx
-\sum_{j}\int_{\Omega}e^{w_j}\phi\psi_idx
-\rho^+\left(\dfrac{\int_{\Omega}e^W\phi\psi_idx}{\int_{\Omega}e^Wdx}-\dfrac{\int_{\Omega} e^W\phi dx
\int_{\Omega} e^W\psi_idx}{(\int_{\Omega} e^Wdx)^2} \right)\\
&=\int_{\Omega} \nabla h\nabla \psi_idx.
\end{aligned}
\end{equation}
Since $\psi_i(x)=0$ if $|x-\xi_i|\geq R\delta_i$ for some $R>0$, we have
\begin{equation*}
\int_{\Omega}e^{w_j}\phi\psi_idx=O(\delta_j^2) \quad \mbox{ for }j\neq i.
\end{equation*}
Passing to the limit in (\ref{psi_i}), we have
\begin{equation*}
\int_{\R^2}\nabla \tilde{\phi}_0\nabla \tilde{\psi}dx-\int_{\R^2}e^w \tilde{\phi}_0\tilde{\psi}dx=0.
\end{equation*}
Moreover, by the orthogonality condition in (\ref{linearproblem1}), we have
\begin{equation*}
\int_{\R^2}\tilde{\phi}_0e^w \frac{y_j}{1+|y|^2}dy=0, \ j=1,2.
\end{equation*}
So we deduce that
\begin{equation*}
\tilde{\phi}_0=\gamma_i\frac{1-|y|^2}{1+|y|^2}.
\end{equation*}

\noindent{\bf Step 2. } We claim that $\gamma_i=0$ for $i=1,\cdots,k$. Multiplying equation (\ref{linearproblem1}) by $PZ_i^0$ and integrate over $\Omega$,
\begin{equation}\label{pz1}
\begin{aligned}
&\int_{\Omega}\nabla \phi\nabla PZ_i^0dx-\sum_{j}\int_{\Omega}e^{w_j}\phi PZ_i^0dx
-\rho^+\left(\dfrac{\int_{\Omega}e^W\phi PZ_i^0dx}{\int_{\Omega}e^Wdx}
-\dfrac{\int_{\Omega}e^W\phi dx\int_{\Omega}e^W PZ_i^0dx}{(\int e^Wdx)^2} \right)\\
&=\int_{\Omega}\nabla h\nabla PZ_i^0dx.
\end{aligned}
\end{equation}
Since
\begin{equation*}
\int_{\Omega}\nabla \phi\nabla PZ_i^0dx=\int_{\Omega}e^{w_i}\phi Z_i^0dx=\int_{\tilde{\Omega}_i}e^w Z^0 \tilde{\phi}_idy
\end{equation*}
where $Z^0=\frac{1-|y|^2}{1+|y|^2}$ and by \eqref{pz},
\begin{equation*}
\begin{split}
\sum_j \int_{\Omega}e^{w_j}\phi PZ_i^0dx&=\int_{\Omega}e^{w_i}\phi PZ_i^0dx
+\sum_{j\neq i}\int_{\Omega}e^{w_j}\phi PZ_i^0dx\\
&=\int_{\tilde{\Omega}_i}e^w\tilde{\phi}_i (1+Z^0(y)+O(\delta_i^2))dy
+\sum_{j\neq i}\int_{\Omega}e^{w_j}\phi PZ_i^0dx\\
&=\int_{\tilde{\Omega}_i}e^w \tilde{\phi}_i (1+Z^0(y))dy+O(\lambda^{\frac{1}{p}}),
\end{split}
\end{equation*}
for some $p>1$, by H\"{o}lder inequality. Moreover, by (\ref{limit2}), \eqref{pz} and (\ref{estimateofw}), one has
\begin{equation*}
\rho^+\left(\dfrac{\int_{\Omega}e^W\phi PZ_i^0dx}{\int_{\Omega}e^Wdx}
-\dfrac{\int_{\Omega}e^W\phi dx\int_{\Omega}e^W PZ_i^0dx}{(\int_{\Omega}e^Wdx)^2} \right)=O(\lambda).
\end{equation*}
From (\ref{pz1}) and the above estimates, one has
\begin{equation}\label{limitofphi}
\lim_{\lambda\to 0}|\log \lambda|\int_{\tilde{\Omega}_i}e^w\tilde{\phi}_i dy=0.
\end{equation}

Next we multiply equation (\ref{linearproblem1}) by $Pw_i$ and integrate over $\Omega$,
\begin{equation*}
\begin{aligned}
&\int_{\Omega}\nabla \phi \nabla Pw_idx-\sum_j \int_{\Omega}e^{w_j}\phi Pw_idx-\rho^+\left(\dfrac{\int_{\Omega}e^W\phi Pw_idx}{\int_{\Omega}e^Wdx}
-\dfrac{\int_{\Omega}e^W\phi dx\int_{\Omega}e^WPw_idx}{(\int_{\Omega}e^Wdx)^2} \right)\\
&=\int_{\Omega}\nabla h\nabla Pw_idx.
\end{aligned}
\end{equation*}
Now we estimate the above equation term by term.
\begin{equation*}
\int_{\Omega}\nabla \phi \nabla Pw_i dx=\int_{\Omega}e^{w_i}\phi dx =\int_{\tilde{\Omega}_i}e^w\tilde{\phi}_idy=o(1)
\end{equation*}
by (\ref{limit1}) and the fact that
\begin{equation*}
\int_{\R^2}e^w\frac{1-|y|^2}{1+|y|^2}dy=0.
\end{equation*}
By the expansion of $Pw_i$,
\begin{equation*}
\begin{split}
&\sum_j\int_{\Omega}e^{w_j}\phi Pw_idx=\int_{\Omega}e^{w_i}\phi Pw_idx+\sum_{j\neq i}\int_{\Omega}e^{w_j}\phi Pw_idx\\
&=\int_{\tilde{\Omega}_i}e^w\tilde{\phi}_i\Big(-4\log \delta_i-2\log (1+|y|^2)+8\pi H(\xi_i,\xi_i)+O(\delta_i|y|+\delta_i^2)\Big)dy\\
&\quad +\sum_{j\neq i}\int_{\tilde{\Omega}_j}e^w \tilde{\phi}_j(8\pi G(\xi_i,\xi_j)+O(\delta_j|y|+\delta_j^2))dy\\
&=\gamma_i\int_{\R^2}e^w\frac{1-|y|^2}{1+|y|^2}[-2\log (1+|y|^2)]dy+o(1).
\end{split}
\end{equation*}
Moreover,
\begin{equation*}
\rho^+\left(\dfrac{\int_{\Omega}e^W\phi Pw_idx}{\int_{\Omega}e^Wdx}
-\dfrac{\int_{\Omega}e^W\phi dx\int_{\Omega}e^WPw_idx}
{(\int_{\Omega}e^Wdx)^2}\right)=o(1)
\end{equation*}
and
\begin{equation*}
\int_{\Omega}\nabla h\nabla Pw_idx=O(\|h\|_p\|Pw_i\|)=O(\log \lambda)^{\frac{1}{2}}\|h\|=o(1).
\end{equation*}
Combining all the above estimates, we have
\begin{equation*}
\gamma_i\int_{\R^2}e^w\frac{1-|y|^2}{1+|y|^2}[-2\log (1+|y|^2)]dy=0,
\end{equation*}
which implies that $\gamma_i=0$ since
\begin{equation*}
\int_{\R^2}e^w\frac{1-|y|^2}{1+|y|^2}[-2\log (1+|y|^2)]dy\neq 0.
\end{equation*}

\noindent{\bf Step 3.} Finally, we derive a contradiction.

\medskip

Multiply equation (\ref{linearproblem1} ) by $\phi$ and integrate:
\begin{equation*}
\begin{split}
\int_{\Omega}|\nabla \phi|^2dx-\sum_i\int_{\Omega}e^{w_i}\phi^2dx-\rho^+\left( \dfrac{\int_{\Omega}e^W\phi^2dx}{\int_{\Omega}e^Wdx}-\dfrac{(\int_{\Omega} e^W\phi dx)^2}{(\int_{\Omega}e^Wdx)^2}\right)=\int_{\Omega}\nabla h\nabla \phi dx.
\end{split}
\end{equation*}
From the estimates in step 1-2 and the assumptions on $\phi$ and $h$, it is not difficult to show that the left hand side of the above equation tends to $1$, while the right hand side has limit $0$. This is a contradiction which concludes the proof.
\end{proof}

Now we can derive a priori estimates for problem (\ref{linearproblem}).
\begin{proposition}\label{aprioribound}
Let $\mathcal{C}\subset \mathcal{F}_k\Omega$ be a compact set. Then, there exist $\lambda_0>0$ and $C>0$ such that for any $\lambda\in (0, \lambda_0)$, ${\bm\xi}\in \mathcal{C}$ and $h\in H_0^1(\Omega)$, if $(\phi, c_{ij})$ is a solution of \eqref{linearproblem}, we have
\begin{equation*}
\|\phi\|\leq C|\log \lambda|\|h\|.
\end{equation*}
\end{proposition}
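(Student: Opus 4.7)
The plan is to reduce \eqref{linearproblem} to the setting of Lemma \ref{aprioriestimate} by absorbing the Lagrange multipliers $c_{ij}$ into the right-hand side, and then to estimate these multipliers by testing against $PZ_l^m$. Since $Z_i^j$ belongs to the kernel of the linearized Liouville equation, $-\Delta Z_i^j = e^{w_i}Z_i^j$ in $\mathbb{R}^2$, and the projection preserves the Laplacian, so $\Delta PZ_i^j = -e^{w_i}Z_i^j$. Setting $\tilde h := h - \sum_{i,j} c_{ij}\,PZ_i^j \in H_0^1(\Omega)$, the right-hand side of \eqref{linearproblem} equals $\Delta\tilde h$, and $\phi$ together with the same orthogonality conditions satisfies the problem of Lemma \ref{aprioriestimate} with $h$ replaced by $\tilde h$. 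That lemma yields
\begin{equation*}
\|\phi\| \leq C|\log\lambda|\,\|\tilde h\| \leq C|\log\lambda|\Big(\|h\| + \sum_{i,j}|c_{ij}|\,\|PZ_i^j\|\Big),
\end{equation*}
so the remaining task is to bound the $c_{ij}$.

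To do this I would test \eqref{linearproblem} against $PZ_l^m$ for $l=1,\dots,k$ and $m=1,2$. The orthogonality condition gives $\int_\Omega \Delta\phi\,PZ_l^m\,dx = -\int_\Omega \nabla\phi\cdot\nabla PZ_l^m\,dx = 0$, and a further integration by parts combined with $\Delta PZ_i^j = -e^{w_i}Z_i^j$ produces the identity $\int_\Omega e^{w_i}Z_i^j\,PZ_l^m\,dx = \langle PZ_i^j, PZ_l^m\rangle$. Hence $(c_{ij})$ solves the $2k\times 2k$ linear system whose $(l,m)$-equation reads
\begin{equation*}
\sum_{i,j} c_{ij}\,\langle PZ_i^j, PZ_l^m\rangle = -\!\int_\Omega\!\nabla h\cdot\nabla PZ_l^m\,dx + \sum_i\!\int_\Omega\! e^{w_i}\phi\,PZ_l^m\,dx + \rho^+\!\int_\Omega\!\Big(\frac{e^W\phi}{\int_\Omega e^W}-\frac{e^W\int_\Omega e^W\phi}{(\int_\Omega e^W)^2}\Big)PZ_l^m\,dx.
\end{equation*}
By \eqref{estimateofkernel} and \eqref{estimateofkernel1}, the coefficient matrix equals $(a^2/\lambda)(\mathrm{Id}+o(1))$, hence is invertible with inverse of operator norm $O(\lambda)$.

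Each term on the right-hand side of the system can be bounded as follows. The first is $O(\lambda^{-1/2})\|h\|$ by Cauchy--Schwarz and \eqref{estimateofkernel}. The $\rho^+$-term is $O(\|\phi\|)$ using that $e^W$ converges in every $L^p(\Omega)$ to the bounded function $h(\cdot,{\bm\xi})e^{z(\cdot,{\bm\xi})}$, that $PZ_l^m$ is bounded in $L^q(\Omega)$ uniformly in $\lambda$ for every $q<2$, and the Sobolev embedding $\|\phi\|_{L^p}\leq C_p\|\phi\|$. The local interaction term $\sum_i\int_\Omega e^{w_i}\phi\,PZ_l^m\,dx$ is most cleanly handled by rescaling around $\xi_l$, using $e^{w_i}=O(\lambda)$ off small balls for $i\ne l$, and yields the same $O(\|\phi\|)$ bound. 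Consequently $|c_{ij}| = O(\lambda^{1/2})\|h\| + O(\lambda)\|\phi\|$, which, together with $\|PZ_i^j\| = O(\lambda^{-1/2})$ from \eqref{estimateofkernel} and the bound from the first paragraph, gives
\begin{equation*}
\|\phi\| \leq C|\log\lambda|\,\|h\| + C\lambda^{1/2}|\log\lambda|\,\|\phi\|.
\end{equation*}
Since $\lambda^{1/2}|\log\lambda|\to 0$ as $\lambda\to 0$, the last term can be absorbed on the left for $\lambda$ small, giving the claim.

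The main technical hurdle I anticipate is the sharp Hölder estimate of $\int_\Omega e^{w_i}\phi\,PZ_l^m\,dx$: since in rescaled variables $PZ_l^m$ has the diverging profile $y_m/[\delta_l(1+|y|^2)]$ near $\xi_l$, a naive Hölder bound in the original variable picks up an extra factor $\delta_l^{-1}$; working in the scaled variable and exploiting that $\tilde\phi_l$ is uniformly bounded in $H(\mathbb{R}^2)$ whenever $\|\phi\|$ is bounded (in the spirit of the computation leading to \eqref{limitofphi} in the proof of Lemma \ref{aprioriestimate}) avoids this loss.
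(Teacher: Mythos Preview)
Your approach coincides with the paper's: absorb the multipliers into $\tilde h=h-\sum c_{ij}PZ_i^j$, apply Lemma~\ref{aprioriestimate}, then test \eqref{linearproblem} against $PZ_l^m$ and invert the almost-diagonal Gram matrix from \eqref{estimateofkernel}--\eqref{estimateofkernel1} to bound the $c_{ij}$, closing by absorption.

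One point should be corrected. For the local term $\sum_i\int_\Omega e^{w_i}\phi\,PZ_l^m\,dx$ you claim an $O(\|\phi\|)$ bound by rescaling and using that $\tilde\phi_l$ is bounded in $H(\mathbb{R}^2)$. That $L$-bound on $\tilde\phi_l$ is \emph{not} a consequence of $\|\phi\|\le C$ alone; in the proof of Lemma~\ref{aprioriestimate} it comes from testing the equation against $\phi$. You could replicate that step here (testing \eqref{linearproblem} against $\phi$, the $c_{ij}$-contributions vanish by orthogonality), but it is unnecessary. The paper simply writes $PZ_l^m=Z_l^m+O(1)$ by \eqref{pz}, uses orthogonality to kill the $Z_l^m$ part, and bounds the remainder by H\"older: $\int_\Omega e^{w_l}\phi\,O(1)\,dx=O(\|e^{w_l}\|_q\|\phi\|_{q'})=O(\lambda^{(1-q)/q})\|\phi\|$ for $q>1$ close to $1$ (and similarly for $i\neq l$, where $PZ_l^m$ is bounded near $\xi_i$). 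After multiplying by the inverse matrix of size $O(\lambda)$ this gives $|c_{ij}|=O(\lambda^{1/q})\|\phi\|+O(\lambda^{1/2})\|h\|$, and since $1/q>1/2$ for $q<2$ the factor $\lambda^{1/q-1/2}|\log\lambda|$ in front of $\|\phi\|$ still tends to zero, so the absorption goes through. This cruder route is both sufficient and simpler than the sharp rescaling estimate you outline.
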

\begin{proof}
By Lemma \ref{aprioriestimate} and (\ref{estimateofkernel}) , we know that
\begin{equation*}
\|\phi\|\leq C|\log \lambda|\left(\|h\|+\sum_{ij}|c_{ij}|\|PZ_i^j\|\right)\leq C|\log \lambda|\left(\|h\|+\sum_{ij}\frac{1}{\sqrt{\lambda}}|c_{ij}|\right).
\end{equation*}
In order to estimate $c_{ij}$,  multiply the equation (\ref{linearproblem}) by $PZ_i^j$ and integrating over $\Omega$,
\begin{equation*}
\begin{split}
&\int_{\Omega}\phi e^{w_i}(PZ_i^j-Z_i^j)dx+\sum_{\ell\neq i}\int_{\Omega}e^{w_\ell}\phi PZ_i^jdx
+O\left(\int_{\Omega}|\phi||PZ_i^j|dx+\int_{\Omega}|\phi|\int_{\Omega}|PZ_i^j|dx\right)\\
&=\int_{\Omega}\nabla h\nabla PZ_i^j+c_{ij}\int_{\Omega} e^{w_i}Z_i^jPZ_i^j dx+\sum_{k\neq i, \ell\neq j}o\left(\frac{|c_{k\ell}|}{\lambda}\right),
\end{split}
\end{equation*}
where in the last line we use (\ref{estimateofkernel1}). Since
\begin{equation*}
\begin{split}
\int_{\Omega}\phi e^{w_i}(PZ_i^j-Z_i^j)dx+\sum_{\ell\neq i}\int_{\Omega}e^{w_\ell}\phi PZ_i^jdx&=O(\|\phi\|(\|e^{w_i}\|_q+\|e^{w_\ell}PZ_i^j\|_q))=O\left(\lambda^{\frac{1-q}{q}}\|\phi\|\right),\\
O\left(\int_{\Omega}|\phi||PZ_i^j|+\int_{\Omega}|\phi|\int_{\Omega}|PZ_i^j|\right)&=O(\|PZ_i^j\|_2\|\phi\|)=O\left(|\log\lambda|^{\frac12}\|\phi\|\right),\\
\int_{\Omega}\nabla h\nabla PZ_i^j&=\|h\|\|PZ_i^j\|=O\left(\frac{1}{\sqrt{\lambda}}\|h\|\right),\\
\end{split}
\end{equation*}
we have
\begin{equation*}
|c_{ij}|+o\left(\sum_{k\neq j,\ell\neq i}|c_{k,\ell}|\right)=O\left(\lambda^{\frac{1}{q}}\|\phi\|
+\lambda|\log\lambda|^{\frac12}\|\phi\|+\lambda^{\frac{1}{2}}\|h\|\right).
\end{equation*}
Summing all $|c_{ij}|$ up and choosing suitable $q\in (1, 2)$, we can get that
\begin{equation*}
\|\phi\|\leq C|\log \lambda |\|h\|.
\end{equation*}
\end{proof}

From the above a priori estimate and the Fredholm alternative it is then standard to derive the following existence result, see for example Proposition 4.5 in \cite{dpr}.
\begin{proposition} \label{pro-ex}
Let $\mathcal{C}\subset \mathcal{F}_k\Omega$ be a compact set. Then, there exist $\lambda_0>0$ and $C>0$ such that for any $\lambda\in (0, \lambda_0)$, ${\bm\xi}\in \mathcal{C}$ and $h\in H^1_0(\Omega)$, there exists a unique solution $(\phi, c_{ij})$ of \eqref{linearproblem}, which satisfies
\begin{equation*}
\|\phi\|\leq C|\log \lambda|\|h\|.
\end{equation*}
\end{proposition}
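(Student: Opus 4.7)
The plan is to obtain existence via the Fredholm alternative, using the a priori bound from Proposition \ref{aprioribound} to guarantee injectivity.

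First, I would recast \eqref{linearproblem} as an equation in the closed subspace
\[
E := \bigl\{ \phi \in H^1_0(\Omega) : \langle \phi, PZ_i^j \rangle = 0 \text{ for every } i=1,\dots,k,\ j=1,2 \bigr\},
\]
where $\langle\cdot,\cdot\rangle$ denotes the $H^1_0$-inner product, and I would let $\Pi : H^1_0(\Omega) \to E$ be the orthogonal projection. Since $Z_i^j$ solves the linearized regular Liouville equation, $-\Delta Z_i^j = e^{w_i}Z_i^j$, and hence $-\Delta PZ_i^j = e^{w_i}Z_i^j$ by the definition of the projection $P$. Integration by parts then gives, for any $\psi \in E$,
\[
\sum_{i,j} c_{ij} \int_\Omega e^{w_i} Z_i^j \psi\, dx \,=\, \sum_{i,j} c_{ij} \langle PZ_i^j,\psi\rangle \,=\, 0,
\]
so the Lagrange multipliers $c_{ij}$ disappear from the weak form, and \eqref{linearproblem} reduces to an equation purely in $\phi \in E$.

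Next, I would write this reduced equation in operator form. Define $T_\lambda : E \to E$ by
\[
T_\lambda \phi := \Pi \circ (-\Delta)^{-1}\!\left[ \rho^+ \!\left( \frac{e^W \phi}{\int_\Omega e^W\,dx} - \frac{e^W \int_\Omega e^W \phi\,dx}{(\int_\Omega e^W\,dx)^2}\right) + \sum_{i=1}^k e^{w_i}\phi \right],
\]
so that the reduced equation reads $(I - T_\lambda)\phi = \Pi h$ in $E$. Both $e^W$ and each $e^{w_i}$ belong to $L^p(\Omega)$ for every $p \in [1,\infty)$ with norms uniform in $\bm\xi$ on the compact set $\mathcal{C}$ (cf. \eqref{eq1}--\eqref{eq2}), so the maps $\phi \mapsto e^W \phi$ and $\phi \mapsto e^{w_i}\phi$ are bounded from $H^1_0(\Omega)$ into $L^q(\Omega)$ for some $q > 1$; composing with the bounded solution operator $(-\Delta)^{-1} : L^q(\Omega) \to H^1_0(\Omega)$ and with the compact Sobolev embedding produces a compact operator $T_\lambda$ on $E$.

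Finally, by the Fredholm alternative for compact perturbations of the identity on the Hilbert space $E$, invertibility of $I - T_\lambda$ reduces to its injectivity. If $(I - T_\lambda)\phi = 0$, then unravelling the reduction, $\phi$ together with the unique $c_{ij}$ obtained by testing against $PZ_i^j$ solves \eqref{linearproblem1} with $h=0$, so Proposition \ref{aprioribound} forces $\phi = 0$. Hence $I - T_\lambda$ is invertible in $E$, yielding a unique $\phi$, and the multipliers $c_{ij}$ are recovered from a $2k\times 2k$ linear system whose coefficient matrix is diagonally dominant of order $\lambda^{-1}$ thanks to \eqref{estimateofkernel}--\eqref{estimateofkernel1}. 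The estimate $\|\phi\| \leq C|\log\lambda|\|h\|$ then follows at once from Proposition \ref{aprioribound}. The main obstacle is essentially bookkeeping: one must check that the compactness of $T_\lambda$ and the invertibility of the $c_{ij}$-system are uniform in $\lambda$ small and $\bm\xi \in \mathcal{C}$, which in turn rests on the uniform $L^p$-bounds on $e^W$ and $e^{w_i}$ together with \eqref{estimateofkernel}--\eqref{estimateofkernel1}.
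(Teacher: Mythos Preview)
Your approach is correct and matches the paper's own treatment, which simply states that existence follows from the a priori bound of Proposition~\ref{aprioribound} together with the Fredholm alternative, referring to \cite[Proposition~4.5]{dpr} for details. One small slip: when you write that $\phi$ ``solves \eqref{linearproblem1} with $h=0$'' you mean \eqref{linearproblem} (the version with the multipliers $c_{ij}$); this is harmless since you correctly invoke Proposition~\ref{aprioribound}, which is the estimate for \eqref{linearproblem}.
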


\smallskip

\subsection{Nonlinear Problem}\label{sec3.4}

The aim of this subsection is to find $(\phi, \{c_{ij}\})$ such that $u=W+\phi$ solves
\begin{equation*}
\left\{\begin{array}{l}
\Delta u+\rho^+\dfrac{e^u}{\int_{\Omega}e^udx}-\lambda e^{-u}=\sum_{ij}c_{ij}e^{w_i}Z_i^j, \vspace{0.2cm}\\
\int_{\Omega}\nabla \phi \nabla PZ_i^jdx=0, \quad j=1,2,~i=1,\cdots,k.
\end{array}
\right.
\end{equation*}
The latter can be rewritten as
\begin{equation}\label{nonlinearproblem}
\begin{cases}
&\Delta \phi+\rho^+\Big(\dfrac{e^W\phi}{\int_\Omega e^Wdx}-\dfrac{e^W\int_\Omega e^W\phi dx}{(\int_\Omega e^Wdx)^2}  \Big)+\sum_{i=1}^k e^{w_i}\phi=\Big(R+\mathcal{S}(\phi)+\mathcal{N}(\phi)\Big)+\sum_{ij}c_{ij}e^{w_i}Z_i^j, \vspace{0.2cm}\\
&\int_{\Omega}\nabla \phi \nabla PZ_i^jdx=0, \quad j=1,2,~i=1,\cdots,k.
\end{cases}
\end{equation}
where $R$ is the error term defined in Subsection \ref{sec3.2} and
\begin{equation*}
\begin{split}
\mathcal{N}(\phi)&=\lambda \Big(f(W+\phi)-f(W)-f'(W)\phi \Big)+\rho^+\Big( g(W+\phi)-g(W)-g'(W)\phi\Big),\\
\mathcal{S}(\phi)&=\left(\sum_{i=1}^k e^{w_i}+\lambda f'(W)\right)\phi,
\end{split}
\end{equation*}
and
\begin{equation} \label{f,g}
f(W)=e^{-W}, \ g(W)=\frac{e^W}{\int_{\Omega}e^W dx}.
\end{equation}
Once the linear theory is carried out, the existence of a solution to the nonlinear problem \eqref{nonlinearproblem} follows a standard strategy. Observe that \eqref{nonlinearproblem} resembles the linear problem \eqref{linearproblem}. Therefore, the idea is to use the existence result of the linear problem, see Proposition \ref{pro-ex}, to construct a contraction map, knowing that the term $R+\mathcal{S}(\phi)+\mathcal{N}(\phi)$ is small. We omit here the details referring to Proposition 4.10 in \cite{dpr} for the full argument.
\begin{proposition}\label{existence}
Let $\mathcal{C}\subset \mathcal{F}_k\Omega$ be compact set. For any $\ve>0$ sufficiently small, there exist $\lambda_0>0$ and $C>0$ such that for any $\lambda\in (0, \lambda_0)$ and ${\bm\xi}\in \mathcal{C}$, there exists a unique $(\phi, \{c_{ij}\})$ satisfying \eqref{nonlinearproblem} and
\begin{equation*}
\|\phi\|\leq C\lambda^{\frac{1}{2}-\ve}, \  \|\partial_{\xi_i^j}\phi\|\leq C\lambda^{-\ve}, \ |c_{ij}|\leq C\lambda.
\end{equation*}
\end{proposition}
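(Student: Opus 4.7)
The plan is to recast \eqref{nonlinearproblem} as a fixed point equation and apply a contraction argument based on the linear theory of Proposition \ref{pro-ex}. Given $\phi$ in some small ball, I would define $v(\phi):=R+\mathcal{S}(\phi)+\mathcal{N}(\phi)$ and let $h(\phi)\in H_0^1(\Omega)$ be the solution of $\Delta h=v$; by the Calder\'on-Zygmund estimate recalled in Section \ref{sec2} one has $\|h\|\le c_p\|v\|_p$ for every $p>1$. Feeding this $h$ into Proposition \ref{pro-ex} produces an operator $\mathcal{T}:\phi\mapsto\mathcal{T}(\phi)$ whose fixed points are exactly the solutions of \eqref{nonlinearproblem}, with
\begin{equation*}
\|\mathcal{T}(\phi)\|\le C|\log\lambda|\,\|R+\mathcal{S}(\phi)+\mathcal{N}(\phi)\|_p.
\end{equation*}

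The next step is to estimate the three pieces of $v(\phi)$ in $L^p$. The error term is controlled by Lemma \ref{estimateoferror}, yielding $\|R\|_p=O(\lambda^{(2-p)/(2p)})$. For the linear-in-$\phi$ term, I would observe that the coefficient $\sum_i e^{w_i}+\lambda f'(W)=\sum_i e^{w_i}-\lambda e^{-W}$ is exactly the quantity $E_1$ already bounded in the proof of Lemma \ref{estimateoferror}, so H\"older's inequality combined with the two-dimensional Sobolev embedding $H_0^1\hookrightarrow L^s$ for every $s<\infty$ gives $\|\mathcal{S}(\phi)\|_p\le C\lambda^{(2-q)/(2q)}\|\phi\|$ for suitable exponents $1<p<q$. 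For the genuinely nonlinear term $\mathcal{N}(\phi)$, a second-order Taylor expansion of $f,g$ from \eqref{f,g} around $W$, combined with bubble-scale pointwise control on $|\phi|$, produces $\|\mathcal{N}(\phi)\|_p\le C\lambda^{(2-p)/(2p)}\|\phi\|^2$ as long as $\|\phi\|$ is small.

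Given $\ve>0$, I would fix $p,q>1$ so close to $1$ that $\min\{(2-p)/(2p),(2-q)/(2q)\}>\tfrac12-\ve/2$ and set $\rho:=\lambda^{1/2-\ve}$. The previous estimates then combine to give
\begin{equation*}
\|\mathcal{T}(\phi)\|\le C|\log\lambda|\,\lambda^{1/2-\ve/2}\bigl(1+\rho+\rho^2\bigr)\le \rho
\end{equation*}
for $\lambda$ small, so $\mathcal{T}$ maps the closed ball $\mathcal{B}_\rho:=\{\|\phi\|\le\rho\}$ into itself. Lipschitz estimates for $\mathcal{S}$ and $\mathcal{N}$, the latter gaining an extra factor of $\rho$, then show that $\mathcal{T}$ is a contraction on $\mathcal{B}_\rho$, producing a unique fixed point $\phi$ with $\|\phi\|\le C\lambda^{1/2-\ve}$. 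The bound $|c_{ij}|=O(\lambda)$ finally follows by testing \eqref{nonlinearproblem} against $PZ_i^j$ and repeating the computation at the end of the proof of Proposition \ref{aprioribound}, now exploiting the improved size of $\phi$.

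The derivative bound $\|\partial_{\xi_i^j}\phi\|=O(\lambda^{-\ve})$ is obtained by differentiating the fixed-point identity, which yields a new linear problem for $\partial_{\xi_i^j}\phi$ driven by $\partial_{\xi_i^j}R$ and the $\xi$-derivatives of $\mathcal{S},\mathcal{N}$; their $L^p$ norms are controlled by the derivative part of Lemma \ref{estimateoferror} and straightforward variants of the estimates above, giving the claimed bound after the $|\log\lambda|$ loss from Proposition \ref{pro-ex}. I expect the main delicacy to be the tension between this $|\log\lambda|$ loss and the borderline smallness of the source terms: one must pick the H\"older exponents $p,q$ close enough to $1$ that every $\lambda^{(2-p)/(2p)}$ factor beats $|\log\lambda|$, while simultaneously preserving the Sobolev embedding used in the estimate of $\mathcal{S}(\phi)$. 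This tradeoff is precisely what forces the $\ve$-loss in the final exponents, and the rest of the argument is the standard Banach fixed-point iteration carried out in detail in Proposition 4.10 of \cite{dpr}.
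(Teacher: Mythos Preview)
Your proposal is correct and matches the paper's approach exactly: the paper does not give a detailed proof but simply points to the contraction-mapping scheme based on Proposition~\ref{pro-ex} and the smallness of $R+\mathcal{S}(\phi)+\mathcal{N}(\phi)$, referring to Proposition~4.10 of \cite{dpr} for the details, precisely as you do. Your identification of the coefficient in $\mathcal{S}(\phi)$ with $E_1$ from Lemma~\ref{estimateoferror} and the handling of the derivative bound by differentiating the fixed-point equation are the expected steps; the only minor caution is that the precise exponent in your $\mathcal{N}(\phi)$ estimate should come from H\"older with $\|\lambda e^{-W}\|_q=O(\lambda^{(1-q)/q})$ rather than the $E_1$-type bound, but this does not affect the contraction once $\|\phi\|\le\lambda^{1/2-\ve}$.
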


\subsection{The reduced problem}\label{sec3.5}
We introduce here the finite-dimensional reduction. In the previous subsection we have found a solution $u=W+\phi$ to the problem
\begin{equation*}
\begin{cases}
\Delta u+\rho^+\dfrac{e^u}{\int_{\Omega}e^udx}-\lambda e^{-u}=\sum_{ij}c_{ij}
e^{w_i}Z_i^j，\vspace{0.2cm}\\
\int_{\Omega}\nabla \phi \nabla PZ_i^jdx=0,~j=1,2,~i=1,\cdots,k.
\end{cases}
\end{equation*}
Consider now the associated energy functional:
\begin{equation}\label{energy}
J(u)=\frac{1}{2}\int_{\Omega}|\nabla u|^2dx-\rho^+\log \int_{\Omega}e^udx-\lambda \int_{\Omega}e^{-u}dx
\end{equation}
and let $\tilde{J}({\bm\xi})=J(W_{\bm\xi}+\phi_{\bm\xi})$.
\begin{lemma}\label{reduction}
Let ${\bm\xi}\in \mathcal{F}_k\Omega$ be a critical point of $\tilde{J}$, then for $\lambda$ small, $u=W_{{\bm\xi}}+\phi_{{\bm\xi}}$ is a solution of \eqref{mainproblem}.
\end{lemma}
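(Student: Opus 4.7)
The goal is to show that the Lagrange multipliers $c_{ij}$ appearing in \eqref{nonlinearproblem} all vanish at a critical point of $\tilde{J}$. Once this is done, $u=W_{\bm\xi}+\phi_{\bm\xi}$ solves \eqref{mainproblem} by construction.

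\textbf{Step 1: Chain rule for $\tilde{J}$.} By the chain rule
\[
\partial_{\xi_i^j}\tilde{J}({\bm\xi}) \;=\; J'(W_{\bm\xi}+\phi_{\bm\xi})\bigl[\partial_{\xi_i^j}(W_{\bm\xi}+\phi_{\bm\xi})\bigr].
\]
Since $u=W+\phi$ satisfies the equation in \eqref{nonlinearproblem} (rewritten as the Euler--Lagrange equation for $J$ up to the multipliers), for every test function $\psi\in H^1_0(\Omega)$,
\[
J'(W+\phi)[\psi] \;=\; -\sum_{k,l} c_{kl}\int_\Omega e^{w_k} Z_k^l\,\psi\,dx.
\]
Therefore
\[
\partial_{\xi_i^j}\tilde{J}({\bm\xi}) \;=\; -\sum_{k,l} c_{kl}\int_\Omega e^{w_k} Z_k^l\,\partial_{\xi_i^j}(W+\phi)\,dx \;=\; -\sum_{k,l} c_{kl}\, M_{(i,j),(k,l)}.
\]

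\textbf{Step 2: Analysis of the matrix $M$.} The critical point condition yields the linear system $\sum_{k,l} c_{kl}\,M_{(i,j),(k,l)}=0$, so I need $M$ to be invertible for $\lambda$ small. The decomposition $W=z-\sum_i Pw_i$ gives
\[
\partial_{\xi_i^j}W \;=\; \partial_{\xi_i^j}z(\cdot,{\bm\xi}) - \partial_{\xi_i^j}Pw_i.
\]
A direct computation shows $\partial_{\xi_i^j}w_i = 4 Z_i^j$, so $\partial_{\xi_i^j}Pw_i = 4\,PZ_i^j + O(1)$ in a suitable norm, while the remaining term $\partial_{\xi_i^j}z$ is smooth and uniformly bounded. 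Together with the bound $\|\partial_{\xi_i^j}\phi\|\le C\lambda^{-\ve}$ from Proposition \ref{existence}, one obtains
\[
M_{(i,j),(k,l)} \;=\; -4\int_\Omega e^{w_k} Z_k^l\,PZ_i^j\,dx \;+\; \text{(lower order)}.
\]
The diagonal entries $(i,j)=(k,l)$ are of order $\lambda^{-1}$ and bounded away from zero (by the same computations leading to \eqref{estimateofkernel}, since $\int e^{w_i}Z_i^j PZ_i^j \simeq \|PZ_i^j\|^2$), while the off-diagonal entries are $o(\lambda^{-1})$ by \eqref{estimateofkernel1} and the localization of $e^{w_k}Z_k^l$ around $\xi_k$. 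Consequently, after multiplying by $\lambda$ the matrix $\lambda M$ is diagonally dominant and invertible for $\lambda$ small.

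\textbf{Step 3: Conclusion.} From the invertibility of $M$ and the system $\sum_{k,l}c_{kl}M_{(i,j),(k,l)}=0$ we conclude $c_{kl}=0$ for all $k,l$, so $u=W+\phi$ genuinely solves \eqref{mainproblem}.

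The main obstacle is the careful expansion of $\partial_{\xi_i^j}(W+\phi)$ needed to identify $-4\,PZ_i^j$ as the leading term and to control the contribution of $\partial_{\xi_i^j}\phi$ (which is only bounded by $\lambda^{-\ve}$ and hence large, but produces a genuinely lower-order contribution in the pairing against $e^{w_k}Z_k^l$ thanks to the orthogonality condition $\int\nabla\phi\,\nabla PZ_i^j=0$ in \eqref{nonlinearproblem} and the estimates of Lemma \ref{le3.3}).
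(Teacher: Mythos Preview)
Your proposal is correct and follows essentially the same route as the paper: compute $\partial_{\xi_i^j}\tilde J$ via the chain rule, reduce to the linear system $\sum_{k,l}c_{kl}M_{(i,j),(k,l)}=0$, and show that $M$ is diagonally dominant of order $\lambda^{-1}$ using $\partial_{\xi_i^j}W=-4PZ_i^j+O(1)$ together with \eqref{estimateofkernel}--\eqref{estimateofkernel1}.

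Two minor remarks. First, your formula $\partial_{\xi_i^j}W=\partial_{\xi_i^j}z-\partial_{\xi_i^j}Pw_i$ omits the terms $\partial_{\xi_i^j}Pw_\ell$ for $\ell\neq i$, which do not vanish since $\delta_\ell$ depends on $\xi_i$ through $d_\ell({\bm\xi})$; however these contributions are $O(1)$ in $L^\infty$ and are absorbed in the lower-order remainder, so the leading term $-4PZ_i^j$ is unaffected. Second, controlling $\int e^{w_k}Z_k^l\,\partial_{\xi_i^j}\phi$ does not actually require the orthogonality condition: a direct H\"older estimate gives $\|e^{w_k}Z_k^l\|_q\,\|\partial_{\xi_i^j}\phi\|_{q'}=O(\lambda^{\frac{2-3q}{2q}-\ve})=o(\lambda^{-1})$ for $q$ close to $1$, which is exactly how the paper handles this term. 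Your orthogonality-based argument (differentiate $\int\nabla\phi\,\nabla PZ_k^l=0$ and use $\|\phi\|\cdot\|\partial_{\bm\xi}PZ_k^l\|$) also works and gives the same order.
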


\begin{proof}
If ${\bm\xi}$ is a critical point of $\tilde{J}(\bm\xi)$, then one has
\begin{equation*}
\langle J'(u), \partial_{\bm\xi}(W_{\bm\xi}+\phi_{\bm\xi})\rangle=0,
\end{equation*}
which is equivalent to
\begin{equation}\label{cij}
\langle \sum_{ij}c_{ij}e^{w_i}Z_i^j, \partial_{\xi_\ell^s}(W_{\bm\xi}+\phi_{\bm\xi})\rangle=0 \quad \mbox{ for }\ell=1,\cdots,k, \ s=1,2.
\end{equation}
Since
\begin{equation*}
\begin{split}
\int_{\Omega}e^{w_i}Z_i^j \partial_{\xi_\ell^s}\phi_{\bm\xi}dx&=\|e^{w_i}Z_i^j\|_q\|\partial_{\xi}\phi\|=O\left(\lambda^{\frac{2-3q}{2q}-\ve}\right)=o\left(\frac{1}{\lambda}\right),\\
\int_{\Omega}e^{w_i}Z_i^j \partial_{\xi_\ell^s}W_{\bm\xi}dx&=-\int_{\Omega}PZ_\ell^s e^{w_i}Z_i^jdx+O\left(\frac{1}{\sqrt{\lambda}}\right)=\frac{a}{\lambda}\delta_{i\ell}\delta_{js}+o\left(\frac{1}{\lambda}\right),
\end{split}
\end{equation*}
we conclude that
\begin{equation*}
c_{ij}+o(1)\sum_{\ell\neq i, s\neq j}c_{\ell s}=0,
\end{equation*}
which implies that all $c_{ij}$ are zero. So the corresponding $u$ is a solution of \eqref{mainproblem} as desired.
\end{proof}

Recall the definition of $\Lambda$ in (\ref{Lambda}). We next consider the expansion of the energy. 
\begin{proposition}\label{pro1}
It holds
\begin{equation*}
J(W)=\Lambda({\bm\xi})-8\pi k\log \lambda -(16\pi-24\pi \log 2)k+o(1),
\end{equation*}
$\mathcal{C}^1$ uniformly in ${\bm\xi}$ in compact sets of $\Omega$.
\end{proposition}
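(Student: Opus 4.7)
The plan is to substitute $W=z(x,\bm\xi)-\sum_{i=1}^{k}Pw_{i}$ directly into
$J(W)=\tfrac12\|W\|^{2}-\rho^{+}\log\int_{\Omega}e^{W}-\lambda\int_{\Omega}e^{-W}$
and expand the three terms using the standard bubble asymptotics, tracking the $\log\lambda$ contribution carefully via the choice of $\delta_{i}$ in \eqref{delta}.

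First I would treat the Dirichlet energy by polarization:
\begin{equation*}
\tfrac12\|W\|^{2}=\tfrac12\|z\|^{2}-\sum_{i}\int_{\Omega}\nabla z\cdot\nabla Pw_{i}+\tfrac12\sum_{i,j}\int_{\Omega}\nabla Pw_{i}\cdot\nabla Pw_{j}.
\end{equation*}
Since $-\Delta Pw_{i}=e^{w_{i}}$ and everything vanishes on $\partial\Omega$, integration by parts reduces all gradient products to pointwise integrals of the form $\int z\,e^{w_{i}}$ and $\int Pw_{j}\,e^{w_{i}}$. Because $e^{w_{i}}$ concentrates at $\xi_{i}$ with mass $8\pi$, I would obtain $\int z\,e^{w_{i}}=8\pi z(\xi_{i},\bm\xi)+o(1)$ and, for $i\neq j$, $\int Pw_{j}\,e^{w_{i}}=8\pi\cdot Pw_{j}(\xi_{i})+o(1)=64\pi^{2}G(\xi_{i},\xi_{j})+o(1)$ via \eqref{bubble-projection}. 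The self-interaction $\int Pw_{i}e^{w_{i}}$ is the most delicate piece: rescaling $x=\xi_{i}+\delta_{i}y$ and using \eqref{bubble-projection} together with the explicit integral $\int_{\R^{2}}w\,e^{w}\,dy=8\pi\log 8-16\pi$ yields $\int Pw_{i}e^{w_{i}}=-16\pi-32\pi\log\delta_{i}+64\pi^{2}H(\xi_{i},\xi_{i})+o(1)$.

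Next, for the exponential terms, \eqref{eq2} gives $e^{W}=h(x,\bm\xi)e^{z(x,\bm\xi)}+O(\lambda)$, so $-\rho^{+}\log\int_{\Omega}e^{W}=-\rho^{+}\log\int_{\Omega}he^{z}+o(1)$, which combines with $\tfrac12\|z\|^{2}$ to reconstruct (half of) $I_{\bm\xi}(z(\cdot,\bm\xi))$. For the residual term I would use the same rescaling near each $\xi_{i}$: by the computation in the proof of Lemma \ref{estimateoferror}, $\lambda e^{-W}\,dx=e^{w}(y)\,dy\,(1+o(1))$ on $B_{\eta}(\xi_{i})$ while being $O(\lambda)$ outside the balls, so $\lambda\int_{\Omega}e^{-W}=k\int_{\R^{2}}e^{w}\,dy+o(1)=8\pi k+o(1)$. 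At this stage, substituting $\log\delta_{i}=\tfrac12\log\lambda+\tfrac12\log d_{i}(\bm\xi)-\tfrac32\log 2$ from \eqref{delta} in the self-interaction term produces the $-8\pi k\log\lambda$ coefficient and, combined with the explicit formula $\log d_{i}(\bm\xi)=8\pi\bigl(H(\xi_{i},\xi_{i})+\sum_{j\neq i}G(\xi_{i},\xi_{j})\bigr)-z(\xi_{i},\bm\xi)$, generates exactly the $-32\pi^{2}\bigl(\sum H+\sum_{i\neq j}G\bigr)$ term appearing in $\Lambda(\bm\xi)$ plus an $8\pi\sum_{i}z(\xi_{i},\bm\xi)$ that cancels against the $-8\pi\sum_{i}z(\xi_{i},\bm\xi)$ produced by $-\sum_{i}\int\nabla z\cdot\nabla Pw_{i}$.

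Collecting constants, the $-16\pi$ from the self-interaction per bubble and the $24\pi\log 2$ from $-\tfrac32\log 2$ per bubble after multiplication by $-16\pi$ yield the $-(16\pi-24\pi\log 2)k$ remainder, and the remaining gradient and logarithmic contributions assemble into $\Lambda(\bm\xi)$. For the $C^{1}$-uniformity, I would repeat the same expansion after differentiating in $\bm\xi$, relying on the smoothness of $z(x,\bm\xi)$, $H$, $G$, and $\delta_{i}(\bm\xi)$ on compact subsets of $\mathcal{F}_{k}\Omega$, and on the fact that all the $o(1)$ and $O(\lambda^{\alpha})$ error estimates obtained via rescaling are uniform in $\bm\xi$ on compacta (this is already built into Lemma \ref{estimateoferror} and its derivatives). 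The main obstacle is the bookkeeping: the logarithmic terms $\log\delta_{i}$, $\log\lambda$, $\log 8$, $\log d_{i}$, and $z(\xi_{i},\bm\xi)$ appear in multiple places with different coefficients, and matching them precisely to recover $\Lambda(\bm\xi)$ requires using the definition of $d_{i}$ in \eqref{delta} at exactly the right moment to trigger the cancellations.
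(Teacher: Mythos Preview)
Your expansion of the $C^{0}$ part is essentially identical to the paper's: the same polarization of $\tfrac12\|W\|^{2}$, the same integration by parts against $e^{w_{i}}$, the same use of \eqref{eq2} for the $\log\int e^{W}$ term, the same rescaling giving $\lambda\int e^{-W}=8\pi k+o(1)$, and the same substitution $\log\delta_{i}^{2}=\log\lambda+\log d_{i}-\log 8$ to extract the constants and rebuild $\Lambda(\bm\xi)$. The bookkeeping you flag as the main obstacle is exactly what the paper does, and your numerical values ($\int Pw_{i}e^{w_{i}}=-16\pi-32\pi\log\delta_{i}+64\pi^{2}H(\xi_{i},\xi_{i})+o(1)$, the $48\pi\log2$, etc.) agree line by line with the paper's computation.

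The one place you diverge is the $C^{1}$ statement. You propose to differentiate the $C^{0}$ expansion term by term and argue that the error terms remain $o(1)$. The paper instead uses the variational identity
\[
\partial_{\xi_{i}^{j}}J(W)=-\int_{\Omega}R\,\partial_{\xi_{i}^{j}}W\,dx=-\int_{\Omega}(E_{1}+E_{2})\,\partial_{\xi_{i}^{j}}W\,dx,
\]
replaces $\partial_{\xi_{i}^{j}}W$ by $-4PZ_{i}^{j}+O(1)$, and evaluates the resulting integrals $\int e^{w_{\ell}}Z_{i}^{j}$ and $\lambda\int_{B_{\eta}(\xi_{\ell})}e^{-W}Z_{i}^{j}$ explicitly by rescaling, arriving directly at $-8\pi\,\partial_{x}\gamma(\xi_{i},\bm\xi)=\partial_{\xi_{i}^{j}}\Lambda(\bm\xi)+o(1)$ via \eqref{der}. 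This route is cleaner because it packages all the $\xi$-dependence into a single integral of $R$ against a known kernel, rather than requiring you to track $\partial_{\xi}$ through each of the five or six separate pieces of the energy expansion and justify that every remainder is $C^{1}$-small. Your approach would work too, but you would have to be careful: for instance, differentiating $\int e^{w_{i}}Pw_{i}$ in $\xi_{i}$ brings in $\partial_{\xi_{i}}w_{i}\sim\delta_{i}^{-1}$ terms whose $o(1)$ status after integration is not immediate from Lemma~\ref{estimateoferror} alone.
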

\begin{proof}
By the definition of $J(W)$ and $W$, one has
\begin{equation*}
\begin{split}
J(W)&=\frac{1}{2}\int_{\Omega}\left(|\nabla z|^2+\sum_{i=1}^k |\nabla Pw_i|^2-2\sum_{i=1}^k \nabla Pw_i \nabla z+2\sum_{i\neq j}\nabla Pw_i \nabla Pw_j\right)dx\\
&\quad-\rho^+\log\int_{\Omega}e^Wdx-\lambda \int_{\Omega}e^{-W}dx.
\end{split}
\end{equation*}
Using (\ref{eq2}),
\begin{equation*}
\begin{split}
\frac{1}{2}\int_{\Omega}|\nabla z|^2dx-\rho^+\log \int_{\Omega}e^Wdx
=\frac{1}{2}\int_{\Omega}|\nabla z|^2dx
-\rho^+\log \int_{\Omega} h(x,{\bm\xi})e^{z(x,{\bm\xi})}dx+O(\lambda).
\end{split}
\end{equation*}
While using (\ref{eq1}) and the estimate for $E_1$,
\begin{equation*}
\begin{split}
\lambda \int_{\Omega}e^{-W}dx
&=\sum_{i=1}^k\int_{\Omega}e^{w_i}dx+o(1)=8k\pi+o(1),\\
\int_{\Omega}\nabla Pw_i \nabla z dx&=\int_{\Omega} e^{w_i}z(x,{\bm\xi})dx
=\int_{\tilde{\Omega}_i}\frac{8}{(1+|y|^2)^2}z(\delta_i y+\xi_i, {\bm\xi})dy=8\pi z(\xi_i,{\bm\xi})+o(1),
\end{split}
\end{equation*}
where $\tilde{\Omega}_i=(\Omega-\xi_i)/\delta_i$. Moreover, using the expansion (\ref{bubble-projection})
\begin{equation*}
\begin{split}
\int_{\Omega}|\nabla Pw_i|^2dx&=\int_{\Omega}e^{w_i}Pw_idx\\
&=\int_{\Omega}e^{w_i}\Big(\log \frac{1}{(\delta_i^2+|x-\xi_i|^2)^2}+8\pi H(x,\xi_i)+O(\lambda) \Big)dx\\
&=64\pi^2 H(\xi_i,\xi_i)-2\int_{\tilde{\Omega}_i}e^{w_i}(\log \delta_i^2+\log (1+|y|^2))dy+o(1)\\
&=64\pi^2 H(\xi_i,\xi_i)-16\pi \log \delta_i^2-16\pi+o(1)\\
&=64\pi^2 H(\xi_i,\xi_i)-16\pi \log \frac{\lambda d_i({\bm\xi})}{8}-16\pi +o(1)\\
&=-64\pi^2 H(\xi_i,\xi_i)-128\pi^2\sum_{j\neq i}G(\xi_i,\xi_j)+16\pi z(\xi_i,{\bm\xi})-16\pi \log \lambda-16\pi+48\pi \log 2+o(1),
\end{split}
\end{equation*}
and for $i\neq j$,
\begin{equation*}
\begin{split}
\int_{\Omega}\nabla Pw_i \nabla Pw_jdx&=\int_{\Omega}e^{w_i}\Big(\log \frac{1}{(\delta_j^2+|x-\xi_j|^2)^2}+8\pi H(x,\xi_j)+O(\lambda) \Big)dx=64\pi^2 G(\xi_i,\xi_j)+o(1).
\end{split}
\end{equation*}
Combining all the above estimates, we have
\begin{equation*}
\begin{split}
J(W)&=\frac{1}{2}\int_{\Omega}|\nabla z|^2dx-\rho^+\log \int_{\Omega} h(x,{\bm\xi})e^{z(x,{\bm\xi})}dx-8\pi k\log \lambda\\
&\quad-32\pi^2\sum_{i=1}^k\left(H(\xi_i,\xi_i)+\sum_{j\neq i}G(\xi_i,\xi_j)\right)
-(16\pi-24\pi \log 2)k+o(1)\\
&=\Lambda({\bm\xi})-8\pi k\log \lambda -(16\pi-24\pi \log 2)k+o(1).
\end{split}
\end{equation*}
Next, we consider the derivative of $J(W)$.
\begin{equation*}
\begin{split}
\partial_{\xi_i^j} J(W)&=\int_{\Omega}\left(-\Delta W-\rho^+\dfrac{e^W}{\int_{\Omega} e^Wdx}+\lambda e^{-W}\right)\partial_{\xi_i^j}Wdx=-\int_{\Omega}(E_1(x)+E_2(x))\partial_{\xi_i^j}Wdx \\
&=4\int_{\Omega}E_1(z)Z_i^j dx+o(1)
=4\int_{\Omega}\left(\sum_\ell e^{w_\ell} -\lambda e^{-W}\right)Z_i^jdx,
\end{split}
\end{equation*}
where $E_1, E_2$ were introduced in Lemma \ref{estimateoferror} and where we used
\begin{equation*}
\partial_{\xi_i^j}W=-4PZ_i^j+O(1).
\end{equation*}
Using the definition of $w_i$ and $Z_i^j$, for $\ell\neq i$
\begin{equation*}
\begin{split}
\int_{\Omega} e^{w_\ell}Z_i^jdx&=\int_{\Omega}\frac{8\delta_\ell^2}{(\delta_\ell^2+|x-\xi_\ell|^2)^2}\frac{x_j-\xi_i^j}{\delta_i^2+|x-\xi_i|^2}dx=8\pi \frac{\xi_\ell^j-\xi_i^j}{|\xi_\ell-\xi_i|^2}+o(1).
\end{split}
\end{equation*}
Moreover, taking $\eta>0$ such that $|\xi_i-\xi_j|\geq2\eta$ and $d(\xi_i,\partial\Omega)\geq2\eta$, we have
\begin{equation*}
\begin{split}
\int_{B(\xi_\ell, \eta)}\lambda e^{-W}Z_i^jdx &=\lambda\int_{B(\xi_\ell,\eta)}\exp\Big[8\pi\sum_i H(x,\xi_i)-z(x,{\bm\xi})+O(\lambda)\Big]\frac{x_j-\xi_i^j}{\delta_i^2+|x-\xi_i|^2}
\prod_{i=1}^k\frac{1}{(\delta_i^2+|x-\xi_i|^2)^2}dx\\
&=\frac{\lambda}{\delta_\ell^2}\int_{\tilde{\Omega}_\ell}\exp\Big[8\pi H(\xi_\ell,\xi_\ell)+8\pi\sum_{j\neq \ell}G(\xi_\ell,\xi_j)-z(\xi_\ell, {\bm\xi})\Big]\frac{1}{(1+|y|^2)^2}\frac{\xi_\ell^j-\xi_i^j}{|\xi_\ell-\xi_i|^2}dx+o(1)\\
&=8\pi \frac{\xi_\ell^j-\xi_i^j}{|\xi_\ell-\xi_i|^2}+o(1).
\end{split}
\end{equation*}
Let $$\gamma(x, {\bm\xi})=8\pi H(x, \xi_i)+8\pi \sum_{j\neq i}G(x, \xi_j)-z(x,{\bm\xi}).$$
Then, 
\begin{equation*}
\begin{split}
\int_{B(\xi_i, \eta)}\lambda e^{-W}Z_i^jdx
&=\lambda\int_{B(\xi_i,\eta)}\exp\Big[8\pi\sum_i H(x,\xi_i)-z(x,{\bm\xi})+O(\lambda)\Big]\frac{x_j-\xi_i^j}{\delta_i^2+|x-\xi_i|^2}
\prod_{i=1}^k\frac{1}{(\delta_i^2+|x-\xi_i|^2)^2}dx\\
&=\frac{\lambda}{\delta_i^3}\int_{\tilde{\Omega}_i}\frac{1}{(1+|y|^2)^2}\frac{y_j}{1+|y|^2}\exp\Big[8\pi H(\xi_i+\delta_i y, \xi_i)+8\pi \sum_{j\neq i}G(\xi_i+\delta_i y,\xi_j)\\
&\quad-z(\xi_i+\delta_i y, {\bm\xi})\Big]dy+o(1)\\
&=\frac{8}{\delta_i}\int_{B(0, \frac{\eta}{\delta_i})}\frac{y_j}{(1+|y|^2)^3}exp[\gamma(\xi_i+\delta_i y, {\bm\xi})-\gamma(\xi_i,{\bm\xi})]dy+o(1)\\
&=\frac{8}{\delta_i}\int_{\R^2}\frac{y_j}{(1+|y|^2)^3}\frac{\partial \gamma}{\partial x}(\xi_i, {\bm\xi})\cdot \delta_i y \,dy +o(1)\\
&=2\pi \frac{\partial \gamma}{\partial x}(\xi_i, {\bm\xi})+o(1).
\end{split}
\end{equation*}
Finally,
\begin{equation*}
\begin{split}
\int_{\Omega\setminus\bigcup_i B(\xi_i, \eta)}\lambda e^{-W}Z_i^jdx\leq C\lambda\int_{\Omega\setminus\bigcup_i B(\xi_i, \eta)}e^{\sum_{\ell}Pw_\ell}|Z_i^j|dx
\leq C\lambda =o(1).
\end{split}
\end{equation*}
Combining the above estimates, we have
\begin{equation*}
\begin{split}
\partial_{\xi_i^j}J(W)=-8\pi\frac{\partial \gamma}{\partial x}(\xi_i,{\bm\xi})+o(1)=\partial_{\xi_i^j}\Lambda({\bm\xi})+o(1),
\end{split}
\end{equation*}
as desired, where we used \eqref{der}.
\end{proof}

Finally, we have the following expansion of the reduced energy.
\begin{proposition}\label{pro2}
It holds
\begin{equation*}
\tilde{J}({\bm\xi}):=J(W_{\bm\xi}+\phi_{\bm\xi})=J(W_{\bm\xi})+o(1),
\end{equation*}
$\mathcal{C}^1$ uniformly in ${\bm\xi}$ in compact sets of $\mathcal{F}_k\Omega$.
\end{proposition}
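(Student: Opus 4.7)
The plan is to Taylor expand $\tilde J({\bm\xi})=J(W_{\bm\xi}+\phi_{\bm\xi})$ around $\phi=0$ and show each correction term vanishes as $\lambda\to 0$, uniformly for ${\bm\xi}$ in compact subsets of $\mathcal{F}_k\Omega$. The basic ingredients are the $H^1_0$ bounds $\|\phi\|\leq C\lambda^{1/2-\varepsilon}$, $\|\partial_{\bm\xi}\phi\|\leq C\lambda^{-\varepsilon}$ and the Lagrange multiplier bound $|c_{ij}|\leq C\lambda$ from Proposition \ref{existence}, the error bound $\|R\|_2=O(1)$ from Lemma \ref{estimateoferror}, and the orthogonality $\int_\Omega\nabla\phi\nabla PZ_i^j\,dx=0$.

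For the $C^0$ bound I would use
$$
J(W+\phi)-J(W)=\langle J'(W),\phi\rangle+\int_0^1(1-t)\langle J''(W+t\phi)\phi,\phi\rangle\,dt.
$$
After integration by parts the first term equals $-\int_\Omega R\phi\,dx$, which H\"older bounds by $C\|R\|_2\|\phi\|=O(\lambda^{1/2-\varepsilon})=o(1)$. The quadratic remainder reduces to expressions of the form $\int|\nabla\phi|^2$, $\int e^{w_i}\phi^2$ and $\int e^{\pm W}\phi^2$, all of order $\|\phi\|^2=O(\lambda^{1-2\varepsilon})$, while the higher-order nonlinear contributions are controlled via the pointwise exponential bounds already used in the proof of Lemma \ref{estimateoferror}.

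For the $C^1$ bound I would split
$$
\partial_{\xi_i^j}\tilde J({\bm\xi})-\partial_{\xi_i^j}J(W_{\bm\xi})=\langle J'(W+\phi)-J'(W),\partial_{\xi_i^j}W\rangle+\langle J'(W+\phi),\partial_{\xi_i^j}\phi\rangle.
$$
Since $u=W+\phi$ solves \eqref{nonlinearproblem}, $J'(W+\phi)=-\sum_{a,b}c_{ab}e^{w_a}Z_a^b$ weakly. Rewriting the orthogonality as $\int_\Omega e^{w_a}Z_a^b\phi\,dx=0$ and differentiating in ${\bm\xi}$ gives $\int e^{w_a}Z_a^b\,\partial_{\xi_i^j}\phi\,dx=-\int\partial_{\xi_i^j}(e^{w_a}Z_a^b)\phi\,dx$, so the second piece becomes $\sum c_{ab}\int\partial_{\xi_i^j}(e^{w_a}Z_a^b)\phi\,dx$, which is $o(1)$ because $|c_{ab}|=O(\lambda)$ easily absorbs the inverse-$\lambda$ growth of the integrand. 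In the first piece I expand $J'(W+\phi)-J'(W)=J''(W)\phi+\mathcal N(\phi)$; by self-adjointness $\langle J''(W)\phi,\partial_{\xi_i^j}W\rangle=\langle\phi,J''(W)\partial_{\xi_i^j}W\rangle$, and combining $\partial_{\xi_i^j}W=-4PZ_i^j+O_{H^1_0}(1)$ (computed in Lemma \ref{reduction}) with $\|J''(W)PZ_i^j\|_{H^{-1}}=o(1)$ — which follows from $-\Delta PZ_i^j-e^{w_i}PZ_i^j=e^{w_i}(PZ_i^j-Z_i^j)$, the expansion \eqref{pz}, and the Lemma \ref{estimateoferror} bounds on the discrepancies $\lambda e^{-W}-\sum_i e^{w_i}$ and $\rho^+e^W/\!\int e^W$ — yields $o(1)$; the nonlinear part $\mathcal N(\phi)$ is handled as in the $C^0$ case.

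The main obstacle is the $C^1$ estimate: the naive norms $\|\partial_{\bm\xi}W\|=O(\lambda^{-1/2})$ and $\|\partial_{\bm\xi}\phi\|=O(\lambda^{-\varepsilon})$ both diverge as $\lambda\to 0$, so direct Cauchy--Schwarz estimates are useless. The trick is to convert these problematic derivatives into benign quantities by differentiating the orthogonality relation (using the smallness of the multipliers $c_{ij}=O(\lambda)$) and by exploiting that $\partial_{\bm\xi}W$ lies, to leading order, in the approximate kernel of the linearized operator $J''(W)$.
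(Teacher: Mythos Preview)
Your overall strategy matches the paper's and the $C^0$ argument is fine; the $C^1$ splitting into $\langle J'(W+\phi)-J'(W),\partial_{\xi_i^j}W\rangle+\langle J'(W+\phi),\partial_{\xi_i^j}\phi\rangle$ is exactly what the paper does, and your treatment of the second piece (via differentiating the orthogonality and using $|c_{ab}|=O(\lambda)$) is a legitimate variant of the paper's direct H\"older estimate.

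There is one concrete error in the first piece. Your claim $\|J''(W)PZ_i^j\|_{H^{-1}}=o(1)$ is false. Indeed $-\Delta PZ_i^j-e^{w_i}PZ_i^j=-e^{w_i}(PZ_i^j-Z_i^j)$, and by \eqref{pz} the factor $PZ_i^j-Z_i^j$ is merely $O(1)$ in $L^\infty$ (not vanishing at $\xi_i$), so this term is pointwise of size $O(e^{w_i})$; but $\|e^{w_i}\|_{H^{-1}}=\|Pw_i\|=O(|\log\lambda|^{1/2})$ by Lemma~\ref{le3.3}, not $o(1)$. Your conclusion still survives, since $\|\phi\|=O(\lambda^{1/2-\varepsilon})$ absorbs this logarithm and gives $\langle\phi,J''(W)PZ_i^j\rangle=o(1)$ anyway. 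But the cleaner route---and the one the paper takes---avoids the $H^{-1}$ detour: expand $\langle J''(W)\phi,\partial_{\xi_i^j}W\rangle$ term by term. The gradient term $\int\nabla\phi\cdot\nabla\partial_{\xi_i^j}W=-4\int\nabla\phi\cdot\nabla PZ_i^j+O(\|\phi\|)$ is $o(1)$ directly by the orthogonality; the dangerous term $\lambda\int e^{-W}\phi\,\partial_{\xi_i^j}W$, after replacing $\lambda e^{-W}$ by $\sum_\ell e^{w_\ell}$ (via the $E_1$ bound) and $\partial_{\xi_i^j}W$ by $-4PZ_i^j+O(1)$, reduces to $-4\int e^{w_i}Z_i^j\phi+o(1)=o(1)$, again by orthogonality written as $\int e^{w_i}Z_i^j\phi=0$. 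The cancellation comes from the orthogonality constraint on $\phi$, not from any smallness of $J''(W)PZ_i^j$ as an element of $H^{-1}$.
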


\begin{proof}
To simplify the notation, we shall drop the sub-index ${\bm\xi}$ in the proof. It is not difficult to show that
\begin{equation*}
\begin{split}
J(W+\phi)-J(W)&=\frac{1}{2}\int_{\Omega}|\nabla \phi|^2dx+\int_{\Omega}\nabla W\nabla \phi dx+\lambda \int_{\Omega}e^{-W}(1-e^{-\phi})dx\\
&\quad+\rho^+\Big(\log \int_{\Omega}e^Wdx-\log \int_{\Omega}e^{W+\phi}\Big)dx\\
&=-\int_{\Omega}\Delta z(x,{\bm\xi})\phi dx-\rho^+\int_{\Omega} \dfrac{h(x,{\bm\xi})e^{z(x,{\bm\xi})}\phi}{\int_{\Omega}h(x, {\bm\xi})e^{z(x,{\bm\xi})}dx}dx+\int_{\Omega}\sum_ie^{w_i}\phi dx-\lambda\int_{\Omega}e^{-W}\phi dx\\
&\quad+\rho^+\Big( \log \int_{\Omega}e^Wdx-\log \int_{\Omega}e^{W+\phi}dx
+ \int_{\Omega}\dfrac{h(x,{\bm\xi})e^{z(x,{\bm\xi})}\phi}{\int_{\Omega}h(x, {\bm\xi})e^{z(x,{\bm\xi})}dx}dx\Big)\\
&\quad+\lambda \int_{\Omega}e^{-W}(1-e^{-\phi}+\phi)dx+\|\phi\|^2=o(1).
\end{split}
\end{equation*}
Next we consider the derivatives.
\begin{equation*}
\begin{split}
\partial_{\xi_i^j}[J(W+\phi)-J(W)]
&=-\int_{\Omega}\left(\Delta(W+\phi)+\rho^+\dfrac{e^{W+\phi}}
{\int_{\Omega}e^{W+\phi}dx}
-\lambda e^{-(W+\phi)}\right)\partial_{\xi_i^j}\phi dx\\
&\quad-\int_{\Omega}\Big[ \Delta \phi +\rho^+\left(\frac{e^{W+\phi}}{\int_{\Omega} e^{W+\phi}dx}
-\frac{e^W}{\int_{\Omega}e^Wdx}\right)-\lambda (e^{-(W+\phi)}-e^{-W}) \Big]\partial_{\xi_i^j}Wdx\\
&=~\sum_{i,j}\int_{\Omega}c_{ij}e^{w_i}Z_i^j \partial_{\xi_i^j}\phi dx
-\int_{\Omega}\Delta\phi \partial_{\xi_i^j}W dx
-\int_{\Omega}\lambda e^{-W}\phi \partial_{\xi_i^j}W dx\\
&\quad+\int_{\Omega}\lambda(e^{-(W+\phi)}-e^{-W}+e^{-W}\phi)\partial_{\xi_i^j}W dx\\
&\quad+\rho^+\int_{\Omega}\left(\frac{e^{W+\phi}}{\int e^{W+\phi}}-\frac{e^W}{\int e^W}\right)\partial_{\xi_i^j}W dx.
\end{split}
\end{equation*}
Using the estimate for $c_{ij}$ in Proposition \ref{existence}, we have
\begin{equation*}
\begin{split}
\sum_{i,j}\int_{\Omega}c_{ij}e^{w_i}Z_i^j \partial_{\xi_i^j}\phi dx&=\sum_{i,j} |c_{ij}|\|\partial_{\xi_i^j}\phi\| \cdot \|e^{w_i}Z_i^j\|_q=O(\lambda^{\frac{2-3q}{2q}+1-\ve})=o(1),
\end{split}
\end{equation*}
provided $q$ is sufficiently close to $1$. Recalling the definitions of $f,g$ in \eqref{f,g} we exploit now the estimates in \cite[Lemma 4.7]{dpr}. For some $\theta\in(0,1)$ and $p$ sufficiently close to $1$ we have
\begin{equation*}
\begin{split}
\int_{\Omega}\lambda(e^{-(W+\phi)}-e^{-W}+e^{-W}\phi)\partial_{\xi_i^j}Wdx&=\int_{\Omega}\lambda f''(W+\theta \phi)\phi^2 \partial_{\xi_i^j}Wdx=\|\lambda f''(W+\theta \phi)\phi^2\|_p\|\partial_{\xi_i^j}W\|_q \\ &=O(\lambda^{\frac{1-pq}{pq}-\frac{1}{2}+1-2\ve})=o(1).
\end{split}
\end{equation*}
Moreover, for some $\tilde\theta\in(0,1)$ and suitable $p,q$
\begin{equation*}
\begin{split}
\rho^+\int_{\Omega}\left(\frac{e^{W+\phi}}{\int e^{W+\phi}}-\frac{e^W}{\int e^W}\right)\partial_{\xi_i^j}W\,dx
&=\rho^+\int_{\Omega}g'(W+\tilde\theta \phi)\phi \partial_{\xi_i^j}W\,dx
=\|g'(W+\tilde\theta \phi)\phi \|_p \|\partial_{\xi_i^j}W\|_q \\ &=O(\lambda^{\frac{1}{2}-\ve})=o(1).
\end{split}
\end{equation*}
Recall that
\begin{equation*}
\lambda e^{-W}=\sum_{i=1}^k e^{w_i}+O(\lambda)\quad  \mbox{ and }\quad \partial_{\xi_i^j}W=-4PZ_i^j+O(1),
\end{equation*}
for ${\bm\xi}$ in compact sets of $\mathcal{F}_k\Omega$. Then
\begin{equation*}\begin{split}
\lambda\int_{\Omega}e^{-W}\phi \partial_{\xi_i^j}Wdx&=-4\sum_{\ell=1}^k \int_{\Omega}e^{w_\ell}\phi PZ_{i}^j dx+o(1)\\
&=-4\int_{\Omega}e^{w_i}Z_i^j\phi dx-4\sum_{\ell \neq i}\int_{\Omega}e^{w_\ell}\phi Z_i^jdx+o(1)\\
&=-4\int_{\Omega}\nabla \phi \nabla PZ_i^j dx+o(1)=o(1)
\end{split}
\end{equation*}
by the orthogonality condition satisfied by $\phi$. Moreover, again by the orthogonality condition we have
\begin{equation*}
\begin{split}
\int_{\Omega}\Delta \phi \partial_{\xi_i^j}Wdx&=-\int_{\Omega}\nabla \phi\nabla \partial_{\xi_i^j}Wdx
=-4\int_{\Omega}\nabla \phi (\nabla PZ_i^j+O(1))dx \\ &=O(1)\int_{\Omega}|\nabla \phi |dx=o(1).
\end{split}
\end{equation*}
Combining the above estimates, we have
\begin{equation*}
\partial_{\xi_i^j}\tilde{J}({\bm\xi})=\partial_{\xi_i^j}J(W)+o(1),
\end{equation*}
as desired.
\end{proof}

\medskip
\noindent{\bf Proof of Theorem \ref{thm}. } Let $\mathcal{K}\subset \mathcal{F}_k\Omega$ be a $C^1$-stable set of critical points of $\Lambda$. Then, by Propositions \ref{pro1}-\ref{pro2}, for $\lambda>0$ small, there exists ${\bm\xi}_\lambda$ critical point of $\tilde{J}$ and $d({\bm\xi}_\lambda, \mathcal{K})\to 0$ as $\lambda\to 0$. By Lemma \ref{reduction}, $u_\lambda=W+\phi$ is a solution of (\ref{mainproblem}). It follows that $u_\lambda$ solves the original problem (\ref{sinh-gordon}) with $\rho^+_\lambda=\rho^+$ and
\begin{equation*}
\rho_\lambda^-=\lambda\int_{\Omega}e^{-u}dx=\lambda \int_{\Omega}e^{-W}dx+o(1)=8k\pi+o(1).
\end{equation*}
\qed

\

\section{Asymmetric blow up}\label{sec4}
\subsection{Approximate solutions}\label{sec4.1}
In this section we will derive the proof of Theorem \ref{thm-1}. To this end we will always assume that $\Omega$ is $l-$symmetric for $l\geq 2$ even according to \eqref{symmetry1}. Therefore, we will consider symmetric functions such that
\begin{equation}
\label{symmetry1}
u(x)=u (\mathcal{R}_l\cdot x),
\end{equation}
see \eqref{symmetry1}, and define
\begin{equation*}
\mathcal{H}_l:=\left\{u\in H_0^1(\Omega), \ u \mbox{ satisfies }(\ref{symmetry1})\right\}.
\end{equation*}

Consider problem (\ref{mainproblem}) and let $k\geq 2$ be an odd integer. In order to construct blow up solutions with local masses $(4\pi k(k-1), 4\pi k(k+1))$, we need to consider the following singular Liouville equation. Let $\alpha\geq 2$. It is known that
\begin{equation*}
w_\delta^\alpha(x)=\log \frac{2\alpha^2\delta^\alpha}{(\delta^\alpha+|x|^\alpha)^2}, \quad \delta>0,
\end{equation*}
solves the problem
\begin{equation*}
\Delta w+|x|^{\alpha-2}e^w=0~\mathrm{in}~\R^2, \quad \int_{\R^2}|x|^{\alpha-2}e^wdx<\infty.
\end{equation*}
Similarly to the previous section, let $Pu$ be the projection of the function $u$ into $H_0^1(\Omega)$. We look here for a sign changing solution of the form
\begin{equation*}
u=W+\phi(x), \quad W(x)=z(x)+\sum_{i=1}^k(-1)^i Pw_i(x),
\end{equation*}
where $\phi$ is a small error term, $z(x)$ is a solution of (\ref{equationofz-1}) and $Pw_i=Pw_{\delta_i}^{\alpha_i}$ with
\begin{equation}\label{deltai}
\alpha_i=4i-2, \quad \delta_i=d_i\lambda^{\frac{k-i+1}{4i-2}}, \, d_i>0, \quad i=1,\cdots,k.
\end{equation}
The latter parameters are chosen such that the interaction of different bubbles is small. More precisely, the following functions will play an important role in the interaction estimate:
\begin{equation*}
\begin{split}
\Theta_i(y)=~&Pw_i(\delta_i y)-w_i(\delta_i y)-(\alpha_i-2)\log|\delta_iy|+\sum_{j\neq i}(-1)^{j-i}Pw_j-z(\delta_i y)+\log \lambda, \quad \mbox{i odd},
\end{split}
\end{equation*}
\begin{equation*}
\begin{split}
T_i(y)=~&Pw_i(\delta_i y)-w_i(\delta_i y)-(\alpha_i-2)\log|\delta_iy|+\sum_{j\neq i}(-1)^{j-i}Pw_j+z(\delta_i y)-\log Q, \quad \mbox{i even},
\end{split}
\end{equation*}
where
\begin{equation}\label{qi}
Q=\rho_0^{-1}\int_{\Omega}e^{z-8k\pi G(x,0)}dx.
\end{equation}
As we will see in the sequel, in order to make these two functions small, we will need to choose $\delta_i$ and $\alpha_i$ such that
\begin{equation}\label{parameter1}
(\alpha_i-2)+\sum_{j<i}(-1)^{j-i}2\alpha_j=0, \quad i=1,\cdots,k,
\end{equation}
and
\begin{equation}\label{parameter2}
\begin{split}
&-\alpha_i\log\delta_i-\log(2\alpha_i^2)
-2\sum_{j>i}(-1)^{j-i}\alpha_j\log\delta_j-z(0)+\sum_{j=1}^k(-1)^{j-i}h_j(0)+\log\lambda=0, \quad \mbox{i odd},
\end{split}
\end{equation}
\begin{equation}\label{parameter3}
\begin{split}
&-\alpha_i\log\delta_i-\log(2\alpha_i^2)
-2\sum_{j>i}(-1)^{j-i}\alpha_j\log\delta_j+z(0)+\sum_{j=1}^k(-1)^{j-i}h_j(0)-\log Q=0, \quad \mbox{i even},
\end{split}
\end{equation}
$h_i(x)=4\pi\alpha_iH(x,0).$
From (\ref{parameter1}) we deduce that $\alpha_1=2$ and $\alpha_i=\alpha_{i-1}+4$ for $i\geq 2$ which implies the choice of $\alpha_i$ in (\ref{deltai}). On the other hand, from (\ref{parameter2}) and (\ref{parameter3}) one easily deduces that
\begin{equation*}
\delta_k^{\alpha_k}=\lambda e^{\sum_j(-1)^{j-k}h_j(0)-z(0)-\log(2\alpha_k^2)}
=\lambda e^{8k\pi H(0,0)-z(0)-\log(2\alpha_k^2)}.
\end{equation*}
Moreover,
\begin{equation*}
\delta_{i-1}^{\alpha_{i-1}}=\dfrac{\delta_i^{\alpha_i}}{4\alpha_i^2\alpha_{i-1}^2Q}\lambda.
\end{equation*}
From the above identities, one can get that
\begin{equation*}
\delta_i=d_i\lambda^{\frac{k-i+1}{4i-2}},
\end{equation*}
for some $d_i>0$, which implies (\ref{deltai}).

\medskip

We estimate now $\Theta_i$ and $T_i$. First, using the maximum principle it is not difficult to see that 
\begin{equation*}
\begin{split}
Pw_i(x)&=w_i(x)-\log(2\alpha_i^2\delta_i^{\alpha_i})+h_i(x)+O(\delta_i^{\alpha_i})\\
&=-2\log(\delta_i^{\alpha_i}+|x|^{\alpha_i})+h_i(x)+O(\delta_i^{\alpha_i})
\end{split}
\end{equation*}
and for $i,j=1,\cdots,k$,
\begin{equation}\label{expansion-proj}
Pw_i(\delta_j y)=
\begin{cases}
-2\alpha_i\log(\delta_j|y|)+h_i(0)
+O\Big(\frac{1}{|y|^{\alpha_i}}\Big(\frac{\delta_i}{\delta_j}\Big)^{\alpha_i}\Big)\\
+O(\delta_j|y|)+O(\delta_i^{\alpha_i})\quad \mbox{ if }~i<j\\
\\
-2\alpha_i\log\delta_i-2\log(1+|y|^{\alpha_i})+h_i(0)\\
+O(\delta_i(y))+O(\delta_i^{\alpha_i})\quad \mbox{ if }~i=j\\
\\
-2\alpha_i\log\delta_i+h_i(0)
+O\Big(|y|^{\alpha_i}\Big(\frac{\delta_j}{\delta_i}\Big)^{\alpha_i}\Big)\\
+O(\delta_j|y|)+O(\delta_i^{\alpha_i})
\quad \mbox{ if }~i>j.
\end{cases}
\end{equation}
where $h_i(x)=4\pi \alpha_iH(x,0)$.
\begin{remark}\label{rem1}
From the above expansion, one can get that for $|x|\geq \delta_0$ for $\delta_0>0$ small, the following expansion holds:
\begin{equation*}
\begin{aligned}
\sum_{i=1}^k(-1)^iPw_i(x)=4\pi\sum_i(-1)^i\alpha_iH(x,0)-2\sum_i(-1)^i\alpha_i\log|x|+O(\delta_k^{\alpha_k})
\end{aligned}
\end{equation*}
From the definition of $\alpha_i$ we have $\sum_{i=1}^k(-1)^i\alpha_i=(-1)^k2k$ and hence, for $k$ odd it holds
\begin{equation*}
\sum_{i=1}^k(-1)^i Pw_i(x)=-8kG(x,0)+O(\lambda).
\end{equation*}
\end{remark}

We next introduce the following shrinking annulus 
\begin{equation} \label{annulus}
A_j=\left\{x\in \Omega, \sqrt{\delta_{j-1}\delta_j}\leq |x|\leq \sqrt{\delta_j\delta_{j+1}}\right\}, \quad j=1,\cdots,k,
\end{equation}
where $\delta_0:=0$ and $\delta_{k+1}:=+\infty$.
\begin{lemma}\label{choiceofparameter}
For any $y\in \frac{A_i}{\delta_i}$, the following estimates hold:
\begin{align}
&\Theta_i(y)=O(\delta_i|y|+\lambda ), \quad \mbox{i odd}, \label{estimate1}\\ 
&T_i(y)=O(\delta_i|y|+\lambda ), \quad \mbox{i even}. \label{estimate2}
\end{align}
In particular,
\begin{equation}\label{estimate3}
\sup_{y\in\frac{A_i}{\delta_i}}|\Theta_i(y)|+\sup_{y\in\frac{A_i}{\delta_i}}|T_i(y)|=O(1).
\end{equation}
\end{lemma}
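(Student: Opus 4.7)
The plan is a direct substitution. I would insert the expansion (\ref{expansion-proj}) of $Pw_j(\delta_i y)$, applied with the roles of the indices interchanged to fit the current notation, together with the Taylor expansions $h_j(\delta_i y)=h_j(0)+O(\delta_i|y|)$ and $z(\delta_i y)=z(0)+O(\delta_i|y|)$, into the definitions of $\Theta_i$ and $T_i$. The algebraic constraints (\ref{parameter1})--(\ref{parameter3}) on $\{\alpha_i,\delta_i\}$ were engineered precisely so that the principal (logarithmic and constant) terms cancel, leaving only an error of the claimed size $O(\delta_i|y|+\lambda)$.

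More concretely, for $i$ odd I would expand $\Theta_i(y)$ term by term. The self-contribution gives
\begin{equation*}
Pw_i(\delta_i y)-w_i(\delta_i y)=-\alpha_i\log\delta_i-\log(2\alpha_i^2)+h_i(0)+O(\delta_i|y|+\delta_i^{\alpha_i}),
\end{equation*}
the indices $j<i$ contribute $(-1)^{j-i}\bigl[-2\alpha_j\log(\delta_i|y|)+h_j(0)\bigr]$ plus remainders, and the indices $j>i$ contribute $(-1)^{j-i}\bigl[-2\alpha_j\log\delta_j+h_j(0)\bigr]$ plus remainders. After collecting coefficients, the $\log|y|$ factor has coefficient $-(\alpha_i-2)-2\sum_{j<i}(-1)^{j-i}\alpha_j$, which vanishes exactly by (\ref{parameter1}); the $\log\delta_i$ factor, after using the same identity, reduces to $-\alpha_i$; and the remaining $y$-independent terms assemble into the left-hand side of (\ref{parameter2}), hence vanish by the choice of $\delta_i$. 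For $i$ even the same bookkeeping applies, with the sign of $z(0)$ reversed and $\log\lambda$ replaced by $-\log Q$, and the cancellation is now provided by (\ref{parameter3}); this handles $T_i$.

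The remaining task is to control the residual errors uniformly on $y\in A_i/\delta_i$. Besides the manifestly admissible pieces $O(\delta_i|y|)$ and $O(\delta_j^{\alpha_j})=O(\lambda)$, the cross-bubble terms in (\ref{expansion-proj}) produce remainders of the form $O\bigl((\delta_j/\delta_i)^{\alpha_j}|y|^{-\alpha_j}\bigr)$ for $j<i$ and $O\bigl((\delta_i/\delta_j)^{\alpha_j}|y|^{\alpha_j}\bigr)$ for $j>i$. Since on $A_i/\delta_i$ one has $\sqrt{\delta_{i-1}/\delta_i}\le|y|\le\sqrt{\delta_{i+1}/\delta_i}$, the first is bounded by $(\delta_{i-1}/\delta_i)^{\alpha_j/2}$ and the second by $(\delta_i/\delta_{i+1})^{\alpha_j/2}$; using the explicit power law $\delta_i=d_i\lambda^{(k-i+1)/(4i-2)}$, both reduce to positive powers of $\lambda$, hence are absorbed into $O(\lambda)$. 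This yields (\ref{estimate1})--(\ref{estimate2}), and the uniform bound (\ref{estimate3}) follows at once from $\delta_i|y|\le\sqrt{\delta_i\delta_{i+1}}\to 0$. I expect the main nuisance to be the bookkeeping of these cross-bubble remainders, which rests critically on the specific scaling of the $\delta_i$ rather than on any general interpolation estimate.
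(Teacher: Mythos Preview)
Your proposal is correct and follows essentially the same approach as the paper: substitute the expansion \eqref{expansion-proj} into the definitions of $\Theta_i$, $T_i$, observe that the logarithmic and constant pieces cancel by \eqref{parameter1}--\eqref{parameter3}, and bound the cross-bubble remainders using the annulus cut-offs and the explicit scaling \eqref{deltai}. One small caveat: for $i=k$ one has $\delta_{k+1}=+\infty$, so the upper bound on $\delta_i|y|$ is $\mathrm{diam}(\Omega)=O(1)$ rather than $\sqrt{\delta_k\delta_{k+1}}\to 0$; this still yields \eqref{estimate3}, and the paper makes the identical observation.
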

\begin{proof}
Consider $y\in \frac{A_i}{\delta_i}$. From (\ref{expansion-proj}), and using (\ref{parameter1}) and (\ref{parameter2}), for $i$ odd,
\begin{equation*}
\begin{split}
\Theta_i(y)=~&-\alpha_i\log\delta_i-\log(2\alpha_i^2)+h_i(0)-(\alpha_i-2)\log|\delta_iy|+O(\delta_i|y|+\delta_i^{\alpha_i})\\
&+\sum_{j<i}(-1)^{i-j}\Big[-2\alpha_j\log(\delta_i|y|)+h_j(0)+O\Big(\frac{1}{|y|^{\alpha_j}}\Big(\frac{\delta_j}{\delta_i}
\Big)^{\alpha_j}\Big)+O(\delta_i|y|+\delta_j^{\alpha_j})\Big]\\
&+\sum_{j>i}(-1)^{j-i}\Big[-2\alpha_j\log\delta_j+h_j(0)+O\Big(|y|^{\alpha_j}\Big(\frac{\delta_i}{\delta_j}
\Big)^{\alpha_j}\Big)+O(\delta_i|y|+\delta_j^{\alpha_j})\Big]\\
&-z(0)+\log\lambda+O(\delta_i|y|)\\
=~&\Big[\sum_{j=1}^k(-1)^{j-i}h_j(0)-\alpha_i\log\delta_i-\log(2\alpha_i^2)-2\sum_{j>i}(-1)^{j-i}\alpha_j\log\delta_j-z(0)+\log\lambda\Big]\\
&\ \ \ (=0 \mbox{ because \ of }(\ref{parameter2}))\\
&-\log|\delta_i|y||\Big[(\alpha_i-2)+\sum_{j<i}(-1)^{i-j}2\alpha_j\Big]\\
&\ \ \ (=0 \mbox{ because \ of }(\ref{parameter1}))\\
&+O(\delta_i|y|)+\sum_j\delta_j^{\alpha_j}+\sum_{j>i}O\Big(|y|^{\alpha_j}\Big(\frac{\delta_i}{\delta_j}
\Big)^{\alpha_j}\Big)+\sum_{j<i}O\Big(\frac{1}{|y|^{\alpha_j}}\Big(\frac{\delta_j}{\delta_i}
\Big)^{\alpha_j}\Big)\\
=~&O(\delta_i|y|)+\sum_j\delta_j^{\alpha_j}+\sum_{j>i}O\Big(|y|^{\alpha_j}\Big(\frac{\delta_i}{\delta_j}
\Big)^{\alpha_j}\Big)+\sum_{j<i}O\Big(\frac{1}{|y|^{\alpha_j}}\Big(\frac{\delta_j}{\delta_i}
\Big)^{\alpha_j}\Big)\\
=~&O(\delta_i|y|+\lambda).
\end{split}
\end{equation*}
Similarly, for $i$ even,
\begin{equation*}
\begin{split}
T_i(y)=~&\Big[\sum_{j=1}^k(-1)^{j-i}h_j(0)-\alpha_i\log\delta_i-\log(2\alpha_i^2)-2\sum_{j>i}(-1)^{j-i}\alpha_j\log\delta_j+z(0)-\log Q\Big]\\
&\ \ \ (=0 \mbox{ because \ of }(\ref{parameter3}))\\
&-\log|\delta_i|y||\Big[(\alpha_i-2)+\sum_{j<i}(-1)^{i-j}2\alpha_j\Big]\\
&\ \ \ (=0 \mbox{ because \ of }(\ref{parameter1}))\\
&+O(\delta_i|y|)+\sum_j\delta_j^{\alpha_j}+\sum_{j>i}O\Big(|y|^{\alpha_j}\Big(\frac{\delta_i}{\delta_j}
\Big)^{\alpha_j}\Big)+\sum_{j<i}O\Big(\frac{1}{|y|^{\alpha_j}}\Big(\frac{\delta_j}{\delta_i}
\Big)^{\alpha_j}\Big)\\
=~&O(\delta_i|y|)+\sum_j\delta_j^{\alpha_j}+\sum_{j>i}O\Big(|y|^{\alpha_j}\Big(\frac{\delta_i}{\delta_j}
\Big)^{\alpha_j}\Big)+\sum_{j<i}O\Big(\frac{1}{|y|^{\alpha_j}}\Big(\frac{\delta_j}{\delta_i}
\Big)^{\alpha_j}\Big)\\
=~&O(\delta_i|y|+\lambda).
\end{split}
\end{equation*}
Finally, (\ref{estimate3}) follows from the above two estimates since $\delta_i|y|=O(1)$ when $y\in\frac{A_i}{\delta_i}$.
\end{proof}

Finally, we will need the following non-degeneracy result for entire singular Liouville equations which was derived in \cite[Theorem 6.1]{grossi-pistoia} for $l=2$ and which can be extended to any $l\geq2$ even.
\begin{proposition}\label{nondegeneracyofmeanfield}
Assume $\phi : \R^2\to \R$ satisfying (\ref{symmetry1}) is a solutions of
\begin{equation*}
\Delta \phi+2\alpha^2\frac{|y|^{\alpha-2}}{(1+|y|^\alpha)^2}\phi=0 \quad\mbox{ in }\quad\R^2, ~\quad \int_{\R^2}|\nabla \phi|^2dy<\infty,
\end{equation*}
with $\alpha\geq 2$ and $\frac{\alpha}{2}$ odd. Then,
\begin{equation*}
\phi(y)=\gamma\frac{1-|y|^\alpha}{1+|y|^\alpha}, \quad \mbox{for some $\gamma\in \R$}.
\end{equation*}
\end{proposition}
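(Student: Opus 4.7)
The plan is to reduce the PDE to a family of second-order ODEs by Fourier-decomposing in the angular variable, then exploit the symmetry hypothesis to kill all but the radial mode. Writing $y = r(\cos\theta,\sin\theta)$ and expanding
\begin{equation*}
\phi(r,\theta) = a_0(r) + \sum_{n=1}^{\infty}\bigl[a_n(r)\cos(n\theta) + b_n(r)\sin(n\theta)\bigr],
\end{equation*}
the radial symmetry of the coefficient $|y|^{\alpha-2}/(1+|y|^{\alpha})^2$ decouples the equation into the family
\begin{equation*}
\phi_n''(r) + \tfrac{1}{r}\phi_n'(r) + \Bigl(\tfrac{2\alpha^{2} r^{\alpha-2}}{(1+r^{\alpha})^2} - \tfrac{n^{2}}{r^{2}}\Bigr)\phi_n(r) = 0, \qquad r>0,
\end{equation*}
one such ODE for each $a_n$ and each $b_n$. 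The $l$-symmetry $\phi(\mathcal R_l y)=\phi(y)$ with $l\geq 2$ even forces $a_n=b_n=0$ unless $n$ is a multiple of $l$; in particular, since $m:=\alpha/2$ is odd and $l$ is even, $n=m$ is excluded by the symmetry.

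Next I would classify, mode by mode, those $\phi_n$ for which the full function $\phi$ lies in $\dot H^{1}(\mathbb{R}^{2})$. A Frobenius analysis at $r=0$ gives the two local solutions $\phi_n\sim r^{n}$ and $\phi_n\sim r^{-n}$ (and for $n=0$, a constant and $\log r$); at infinity the potential is integrable at the leading order, so WKB/Frobenius yields $\phi_n\sim r^{n}$ and $\phi_n\sim r^{-n}$. Requiring $\int_{\mathbb{R}^2}|\nabla\phi|^{2}<\infty$ (and continuity at the origin) selects the $r^{n}$ branch at $0$ and the $r^{-n}$ branch at infinity. For a second-order linear ODE, these two matching conditions on a two-dimensional solution space leave only the trivial solution except at special values of $n$ where a globally defined bounded solution exists.

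The exceptional values are identified by exhibiting explicit kernel elements. Scaling invariance of the singular Liouville bubble $w_{\delta}^{\alpha}$ yields the radial mode $\tfrac{1-|y|^{\alpha}}{1+|y|^{\alpha}}$ at $n=0$; reduction of order shows the independent radial solution contains a logarithm and fails the energy bound, so the radial kernel is exactly $\mathbb{R}\cdot\tfrac{1-|y|^{\alpha}}{1+|y|^{\alpha}}$. A direct verification shows that $\tfrac{\mathrm{Re}(y^{m})}{1+|y|^{\alpha}}$ and $\tfrac{\mathrm{Im}(y^{m})}{1+|y|^{\alpha}}$ span the kernel at $n=m$; for every other $n\geq 1$ no finite-energy solution exists. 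Combined with the symmetry constraint, which forbids precisely $n=m$, the only surviving mode is the radial one, giving $\phi(y)=\gamma\,\tfrac{1-|y|^{\alpha}}{1+|y|^{\alpha}}$.

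The main obstacle is the rigorous non-existence claim for $n\in\{l,2l,\dots\}\setminus\{m\}$. The cleanest route is the substitution $t=r^{\alpha}/(1+r^{\alpha})\in(0,1)$, which transforms the mode ODE into a hypergeometric equation whose bounded solutions at both endpoints are classified and only survive when $n/\alpha$ takes specific half-integer values, isolating $n=m$ as the unique nontrivial case. Equivalently, one may reproduce the Wronskian/variation-of-parameters argument of \cite[Theorem~6.1]{grossi-pistoia}, which was written for $l=2$ but which is mode-independent and hence extends verbatim to any even $l$; this is the strategy I would adopt since it requires no additional computation beyond observing that the symmetry cutoff of admissible $n$'s merely becomes coarser when $l>2$.
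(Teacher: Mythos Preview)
Your proposal is correct and aligns with the paper's treatment: the paper does not supply its own proof of this proposition but simply remarks that the result was obtained in \cite[Theorem~6.1]{grossi-pistoia} for $l=2$ and ``can be extended to any $l\geq 2$ even.'' Your outline via angular Fourier decomposition, the observation that the $l$-symmetry admits only modes $n\in l\mathbb{Z}$ while the nonradial kernel sits at $n=m=\alpha/2$ (odd, hence never a multiple of the even $l$), and the appeal to the mode-by-mode ODE analysis of Grossi--Pistoia is precisely the intended extension.
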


\smallskip

\subsection{Estimate of the error term}\label{sec4.2}
In this subsection we estimate the error of the approximate solution. To this end, set
\begin{equation*}
\begin{aligned}
E_1=~&\rho^+\dfrac{e^W}{\int_{\Omega}e^W dx}-\sum_{i\ even}|x|^{\alpha_i-2}e^{w_i}-\rho_0\dfrac{e^{z-8k\pi G(x,0)}}{\int_{\Omega }e^{z-8k\pi G(x,0)} dx},\\ 
E_2=~&\lambda e^{-W}-\sum_{i \ odd}|x|^{\alpha_i-2}e^{w_i}.
\end{aligned}
\end{equation*}
\begin{lemma}\label{errorestimate}
For any $q\geq 1$ sufficiently close to $1$, the following holds:
\begin{equation*}
\|E_1\|_{q}=O\Big(\lambda^{\frac{2-q}{2q(2k-1)}}\Big), \quad \|E_2\|_q=O\Big(\lambda^{\frac{2-q}{2q(2k-1)}}\Big).
\end{equation*}
\end{lemma}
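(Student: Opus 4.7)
The plan is to decompose $\Omega$ into the concentration annuli $A_1,\ldots,A_k$ from \eqref{annulus} together with an outer region $\Omega\setminus B_\eta$ for a fixed small $\eta>0$, and to exploit the fact that the auxiliary quantities $\Theta_i$ and $T_i$ are tailor-made to measure the ratio between $\lambda e^{-W}$ (respectively $e^W/Q$) and the $i$-th bubble. Indeed, unwrapping the definitions of $W$ and $w_i$ and keeping track of the signs $(-1)^i$ yields the exact identities
\begin{equation*}
\lambda e^{-W(x)} = |x|^{\alpha_i-2}e^{w_i(x)}\,e^{\Theta_i(x/\delta_i)}\ \ (i\text{ odd}),\qquad
\frac{e^{W(x)}}{Q} = |x|^{\alpha_i-2}e^{w_i(x)}\,e^{T_i(x/\delta_i)}\ \ (i\text{ even}),
\end{equation*}
with $Q$ as in \eqref{qi}.

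Combined with Lemma \ref{choiceofparameter}, these give on each $A_i$ the pointwise bounds (one per parity case)
\begin{equation*}
\bigl|\lambda e^{-W}-|x|^{\alpha_i-2}e^{w_i}\bigr| + \Bigl|\tfrac{e^W}{Q}-|x|^{\alpha_i-2}e^{w_i}\Bigr| \lesssim |x|^{\alpha_i-2}e^{w_i}(\delta_i|y|+\lambda).
\end{equation*}
Rescaling by $x=\delta_i y$ and using $\int_{\R^2}|x|^{\alpha-2}e^{w_\delta^\alpha}\,dx=4\pi\alpha$, a direct calculation yields
\begin{equation*}
\big\||x|^{\alpha_i-2}e^{w_i}(\delta_i|y|+\lambda)\big\|_{L^q(A_i)} = O\bigl(\delta_i^{(2-q)/q}\bigr)
\end{equation*}
for $q$ close to $1$. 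Same-parity bubbles $j\neq i$ are of lower order on $A_i$ because $|x|/\delta_j$ is either very large or very small by the very definition of $A_i$ and the ordering $\delta_1<\cdots<\delta_k$. The maximum is achieved at $i=k$ with $\delta_k\sim \lambda^{1/(2(2k-1))}$, producing the rate $\lambda^{(2-q)/(2q(2k-1))}$.

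To complete the $E_1$ estimate one must compare the mean-field term $\rho^+ e^W/\int e^W$ with $e^W/Q$ on even annuli and with $\rho_0\, e^{z-8k\pi G}/\int e^{z-8k\pi G}$ on the outer region. Summing the bubble masses $4\pi\alpha_i$ over even $i$ and using Remark \ref{rem1} (which gives $W=z-8k\pi G(x,0)+O(\lambda)$ on $\Omega\setminus B_\eta$) yields
\begin{equation*}
\int_\Omega e^W\,dx = Q\Bigl(4\pi\!\!\!\sum_{i\,\text{even}}\!\alpha_i\, + \rho_0\Bigr)+ O(\lambda) = Q\rho^+ + O(\lambda),
\end{equation*}
since $4\pi\sum_{i\,\text{even}}\alpha_i=4\pi k(k-1)$ when $k$ is odd. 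Hence $\rho^+ e^W/\int e^W = e^W/Q + O(\lambda)$: on even annuli $E_1$ reduces to the quantity already controlled through $T_i$; on the outer region the first and third summands of $E_1$ cancel up to $O(\lambda)$ by Remark \ref{rem1}; on odd annuli both $e^W$ and the even bubble sum are exponentially small, so $E_1=O(\lambda)$ there. An entirely analogous analysis applies to $E_2$ after exchanging the roles of even and odd indices.

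The main obstacle is the global mean-field denominator: the algebraic identity $4\pi\sum_{i\,\text{even}}\alpha_i=4\pi k(k-1)$, which crucially relies on $k$ being odd, is what makes $\int_\Omega e^W\approx Q\rho^+$ match the prescribed constant $\rho^+=4\pi k(k-1)+\rho_0$ and thus allows the cancellation in $E_1$. Once this matching is secured, the rest is careful bookkeeping of the remainders $\delta_i|y|+\lambda$ on each annulus and the $O(\lambda)$ residual on the outer region, which assembled together give the claimed $L^q$ bound.
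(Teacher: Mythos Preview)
Your approach is essentially the same as the paper's: decompose into the annuli $A_i$, use the exact identities expressing $\lambda e^{-W}$ and $e^W/Q$ through $\Theta_i$ and $T_i$, estimate the resulting remainders $O(\delta_i|y|+\lambda)$ in $L^q$ after rescaling, compute $\int_\Omega e^W$, and show cross-bubble terms are lower order. A few points in your sketch are inaccurate and would need to be fixed in a full write-up:

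\begin{itemize}
\item The annuli $A_1,\dots,A_k$ already cover $\Omega$ (by definition $\delta_0=0$, $\delta_{k+1}=+\infty$), so there is no separate outer region $\Omega\setminus B_\eta$. The role you assign to that region is played by $A_k$ itself; the paper treats $A_k$ by combining Remark~\ref{rem1} near the boundary with a direct estimate for small $|x|$.
\item Your error $O(\lambda)$ for $\int_\Omega e^W$ is too optimistic. On each even annulus the correction $T_i=O(\delta_i|y|+\lambda)$ produces an error of order $\delta_i$ (not $\lambda$) after integration against the bubble, and the paper records the total error as $O(\lambda^{1/(2(2k-1))})$. Consequently $\rho^+ e^W/\int e^W = e^W/Q + O(\lambda)$ is not correct; the discrepancy is $e^W\cdot O(\lambda^{1/(2(2k-1))})$, and this larger error is what actually sets the final rate.
\item The claim that on odd annuli ``$e^W$ and the even bubble sum are exponentially small, so $E_1=O(\lambda)$'' is not true pointwise: near the edges of $A_l$ the quantity $e^W$ grows like $\lambda\delta_l^2(1+|y|^{\alpha_l})^2/|y|^{\alpha_l-2}$, which is not $O(\lambda)$. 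One has to integrate and exploit the decay of the weights, as the paper does via the estimate of type \eqref{I21}.
\item On even annuli you also need to dispose of the term $\rho_0\,e^{z-8k\pi G(x,0)}/\int e^{z-8k\pi G(x,0)}$, which is harmless there since $e^{-8k\pi G(x,0)}\sim|x|^{4k}$ is tiny, but it should be mentioned.
\end{itemize}

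None of these are fatal: once the correct error $O(\lambda^{1/(2(2k-1))})$ is tracked through, your computation coincides with the paper's and yields the stated bound.
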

\begin{proof}
First we consider $E_2$. Recall the definition of the annulus $A_i$ in \eqref{annulus}.
\begin{equation*}
\begin{split}
\int_{\Omega}E_2^qdx&=\sum_{i=1}^k \int_{A_i}E_2^qdx
=\sum_{i \ odd}\int_{A_i}E_2^qdx+\sum_{i \ even}\int_{A_i}E_2^qdx=I_1+I_2.
\end{split}
\end{equation*}
One has
\begin{equation*}
\begin{split}
I_1&=\sum_{i \ odd}\int_{A_i}E_2^qdx=\sum_{i \ odd}\int_{A_i}|\lambda e^{\sum_{l\ odd}Pw_l-\sum_{l \ even}Pw_l-z}-\sum_{j \ odd}|x|^{\alpha_j-2}e^{w_j}|^qdx\\
&\leq C\sum_{i \ odd}\int_{A_i}||x|^{\alpha_i-2}e^{w_i}-\lambda e^{\sum_{l \ odd}Pw_l-\sum_{l \ even}Pw_l-z}|^qdx+C\sum_{i, j\ odd,\ i\neq j}\int_{A_i}||x|^{\alpha_j-2}e^{w_j}|^qdx\\
&=I_{11}+I_{12}.
\end{split}
\end{equation*}

Let us estimate $I_{11}$. For fixed $i$ odd,
\begin{equation*}
\begin{split}
&\int_{A_i}||x|^{\alpha_i-2}e^{w_i}-\lambda e^{\sum_{l \ odd}Pw_l-\sum_{l \ even}Pw_l-z}|^qdx\\
&=\int_{A_i}|x|^{q(\alpha_i-2)}e^{qw_i}|1-e^{Pw_i-w_i-(\alpha_i-2)\log |x|+\sum_{j\neq i \ odd}Pw_j-\sum_{l \ even}Pw_l-z+\log\lambda}|^qdx\\
&=C\delta_i^{2-2q}\int_{\frac{A_i}{\delta_i}}\frac{|y|^{q(\alpha_i-2)
}}{(1+|y|^{\alpha_i})^{2q}}|1-e^{\Theta_i(y)}|^qdy=C\delta_i^{2-2q}\int_{\frac{A_i}{\delta_i}}\frac{|y|^{q(\alpha_i-2)
}}{(1+|y|^{\alpha_i})^{2q}}|\Theta_i(y)|^qdy \\
&~\quad(\mbox{ using }(\ref{estimate1}))\\
&=O\Big(\delta_i^{2-2q}\int_{\frac{A_i}{\delta_i}}\frac{|y|^{q(\alpha_i-2)
}}{(1+|y|^{\alpha_i})^{2q}}|\delta_i |y|+\lambda|^qdy \Big)=O(\delta_i^{2-2q}\lambda^q+\delta_i^{2-q})=O(\delta_1^{2-2q}\lambda^q+\delta_k^{2-q})\\
&=O(\lambda^{q+k(1-q)}+\lambda^{\frac{2-q}{2(2k-1)}})
=O(\lambda^{\frac{2-q}{2(2k-1)}}),
\end{split}
\end{equation*}
provided that $q$ is close to $1$. Therefore, we get $I_{11}=O(\lambda^{\frac{2-q}{2(2k-1)}}).$ 

For $I_{12}$, fix $j\neq i$ odd,
\begin{equation}\label{I12}
\begin{split}
&\int_{A_i}||x|^{\alpha_j-2}e^{w_j}|^qdx=C\int_{A_i}\left(\frac{|x|^{\alpha_j-2}\delta_j^{\alpha_j}}{(\delta_j^{\alpha_j}
+|x|^{\alpha_j})^2}\right)^qdx=C\delta_j^{2-2q}\int_{\frac{\sqrt{\delta_{i-1}\delta_i}}{\delta_j}\leq |y|\leq \frac{\sqrt{\delta_i\delta_{i+1}}}{\delta_j}}\frac{|y|^{q(\alpha_j-2)
}}{(1+|y|^{\alpha_j})^{2q}}dy \\
&=\begin{cases}
O\left(\delta_j^{2-2q}\Big(\frac{\sqrt{\delta_i\delta_{i+1}}}{\delta_j}\Big)
^{(\alpha_j-2)q+2}\right)\quad &\mbox{ for }~j>i\\
\\
O\left(\delta_j^{2-2q}\Big(\frac{\sqrt{\delta_i\delta_{i-1}}}{\delta_j}\Big)
^{-(\alpha_j+2)q+2}\right)\quad &\mbox{ for }~j<i
\end{cases}\\
&=\begin{cases}
O\Big(\delta_3^{2-2q}(\frac{\delta_{k-1}}{\delta_k})^{(\alpha_k-2)q+2}\Big)
=O\left(\lambda^{\frac{(k-2)(1-q)}{5}
+\frac{(2k+1)(2(k-1)q+1)}{4(k-1)^2-1}}\right)\\
\\
O\Big(\delta_1^{2-2q}(\frac{\delta_{k-2}}{\delta_{k-1}})^{q(2+\alpha_{k-2})-2}\Big)
=O\left(\lambda^{k(1-q)+\frac{(2k+1)(2(k-2)q-1)}{4(k-2)^2-1}}\right)
\end{cases}\\
&=O\Big(\lambda^{\frac{2-q}{2(2k-1)}}\Big).
\end{split}
\end{equation}
provided that $q$ is close to $1$. Therefore, $\|I_1\|_q=O\Big(\lambda^{\frac{2-q}{2q(2k-1)}}\Big)$.

\smallskip

Next, let us estimate $I_2$. For $l$ even fixed,
\begin{equation*}
\begin{split}
&\int_{A_l}E_2^qdx\leq C\int_{A_l}|\lambda e^{-W}|^qdx+C\sum_{i \ odd}\int_{A_l}||x|^{\alpha_i-2}e^{w_i}|^qdx=I_{21}+I_{22}.
\end{split}
\end{equation*}
We have,
\begin{equation}\label{I21}
\begin{split}
I_{21}&=C\int_{A_l}|\lambda e^{-Pw_l-\sum_{j\neq l \ even}Pw_j-z+\sum_{i \ odd}Pw_i}|^qdx=C\lambda^q \delta_l^2\int_{\frac{A_l}{\delta_l}}|e^{-w_l(\delta_l y)-(\alpha_l-2)\log|\delta_l y|-T_l(y)-\log Q}|^qdy\\
&~\quad(\mbox{ using }(\ref{estimate2}))\\
&=O\Big(\delta_l^{2+2q}\lambda^q
\int_{{\sqrt{\frac{\delta_{l-1}}{\delta_l}}\leq |y|\leq \sqrt{\frac{\delta_{l+1}}{\delta_l}}}
}
\frac{(1+|y|^{\alpha_l})^{2q}}{|y|^{(\alpha_l-2)q}}(1+\delta_l|y|+\lambda)^qdy \Big)\\
&=O\Big(\delta_l^{2+2q}\lambda^q \Big[\Big(\frac{\delta_{l+1}}{\delta_l}\Big)^{\frac{(\alpha_l+2)q}{2}+1}
+\Big(\frac{\delta_{l}}{\delta_{l-1}}\Big)^{\frac{(\alpha_l-2)q}{2}-1}
\Big]\Big)=O\Big(\delta_{2}^{2+2q}\lambda^q\Big[\Big(\frac{\delta_{3}}{\delta_2}\Big)^{\frac{(\alpha_2+2)q}{2}+1}
+\Big(\frac{\delta_{2}}{\delta_{1}}\Big)^{\frac{(\alpha_2-2)q}{2}-1}
\Big] \Big)\\
&=O\Big(\lambda^{q+\frac{(k-1)(1+q)}{3}-\frac{(2k+1)(2q-1)}{6}}\Big)=O\Big(\lambda^{\frac{2-q}{2(2k-1)}}\Big),
\end{split}
\end{equation}
if $q$ is close to $1$. Moreover, similarly to the estimate of $I_{12}$, one can also get that $I_{22}=O\Big(\lambda^{\frac{2-q}{2(2k-1)}}\Big).$

Combining all the above estimates, one has
\begin{equation}
\int_{\Omega}E_2^qdx=O(\lambda^{\frac{2-q}{2(2k-1)}}).
\end{equation}

Next we consider $E_1$. First we need to estimate $\int_{\Omega}e^{W}dx$. For $i$ even fixed,
\begin{equation*}
\begin{split}
\int_{A_i}e^Wdx&=\int_{A_i}e^{Pw_i-w_i+z+\sum_{j\neq i}(-1)^{j-i}Pw_j-(\alpha_i-2)\log|x|}|x|^{\alpha_i-2}e^{w_i}dx=\int_{\frac{A_i}{\delta_i}}e^{T_i(y)+\log Q}|\delta_i y|^{\alpha_i-2}e^{w_i(\delta_i y)}\delta_i^2dy\\
&=\int_{\frac{A_i}{\delta_i}}e^{\log Q+O(\delta_i |y|+\lambda)}|\delta_i y|^{\alpha_i-2}e^{w_i(\delta_i y)}\delta_i^2dy=4\pi\alpha_iQ+O(\lambda^{\frac{1}{2(2k-1)}}),
\end{split}
\end{equation*}
where we have used Lemma \ref{choiceofparameter} for the estimate of $T_i(y)$ and the fact that
\begin{equation*}
\int_{\R^2}\frac{2\alpha_i^2|y|^{\alpha_i-2}}{(1+|y|^{\alpha_i})^2}
dy=4\pi\alpha_i.
\end{equation*}
For $i<k$ odd and fixed, reasoning as in (\ref{I21}) with $q=1$, one has 
\begin{equation*}
\begin{split}
\int_{A_i}e^Wdx=\int_{A_i}e^{-Pw_i-\sum_{j\neq i}(-1)^{j-i}Pw_j+z}dx
=O(\lambda^{\frac{2k-5}{6}}).
\end{split}
\end{equation*}
Finally for $i=k$ which is odd, using Remark \ref{rem1},
\begin{equation*}
\begin{split}
\int_{A_k}e^Wdx&=\int_{A_k}e^z e^{-Pw_k-\sum_{j\neq k}(-1)^{j-k}Pw_j}dx=\int_{|x|>\sqrt{\delta_{k-1}\delta_k}}e^{z-8k\pi G(x,0)}dx
+O(\delta_k^{\alpha_k})+O(\lambda^{\frac{1}{2(2k-1)}})\\
&=\int_{\Omega}e^{z-8k\pi G(x,0)}dx+O(\lambda^{\frac{1}{2(2k-1)}}).
\end{split}
\end{equation*}
In conclusion, one has
\begin{equation}\label{integralofew}
\begin{split}
\int_{\Omega}e^Wdx&=\int_{\Omega}e^{z-8k\pi G(x,0)}dx+\sum_{i \ even}4\pi\alpha_iQ+O(\lambda^{\frac{1}{2(2k-1)}})=\frac{\rho^+}{\rho_0}\int_{\Omega}e^{z-8k\pi G(x,0)}dx+O(\lambda^{\frac{1}{2(2k-1)}}),
\end{split}
\end{equation}
where we used the definition of $Q$ in (\ref{qi}) and the fact that
\begin{equation*}
\sum_{i \ even}4\pi\alpha_iQ=\frac{\rho^+-\rho_0}{\rho_0}\int_{\Omega}e^{z-8k\pi G(x,0)}dx,
\end{equation*}
since $\sum_{i\ even}4\pi\alpha_i=4\pi k(k-1)=\rho^+-\rho_0$.
\medskip

With the estimate for $\int_{\Omega}e^Wdx$ in hand, we now consider $E_1$.
\begin{equation*}
\begin{split}
\int_{\Omega}E_1^qdx&=\sum_{i\ even}\int_{A_i}E_1^qdx+\sum_{l\ odd}\int_{A_l}E_1^qdx=J_1+J_2.
\end{split}
\end{equation*}
First for $i$ even fixed,
\begin{equation*}
\begin{split}
\int_{A_i}E_1^qdx&=\int_{A_i}|\rho^+\frac{e^W}{\int_{\Omega}e^Wex}-\rho_0\frac{e^{z-8k\pi G(x,0)}}{\int_{\Omega}e^{z-8k\pi G(x,0)}dx}-
|x|^{\alpha_i-2}e^{w_i}-\sum_{j\neq i \ even}|x|^{\alpha_j-2}e^{w_j}|^qdx\\
&\leq C\int_{A_i}|\rho^+\frac{e^W}{\int_{\Omega}e^Wex}-|x|^{\alpha_i-2}e^{w_i}|^qdx+C\int_{A_i}|\rho_0\frac{e^{z-8k\pi G(x,0)}}{\int_{\Omega}e^{z-8k\pi G(x,0)}dx}|^qdx
+C\sum_{j\neq i \ even}\int_{A_i}||x|^{\alpha_j-2}e^{w_j}|^qdx\\
&=C\int_{A_i}|\rho^+\frac{e^W}{\int_{\Omega}e^Wdx}-|x|^{\alpha_i-2}e^{w_i}|^qdx
+O(\lambda^{\frac{2-q}{2(2k-1)}})+O(\delta_{i+1}^{4kq+2})\\
&=C\delta_i^{2-2q}\int_{\frac{A_i}{\delta_i}}\frac{|y|^{(\alpha_i-2)q}}{(1+|y|^{\alpha_i})^{2q}}\left|1-e^{Pw_i(\delta_i y)-w_i(\delta_i y)-(\alpha_i-2)\log|\delta_i y|+\sum_{j\neq i}(-1)^{j-i}Pw_j+z+\log \frac{\rho^+}{\int_{\Omega}e^Wdx}}\right|^qdx\\
&~\quad(\mbox{ by }(\ref{estimate2}))\\
&=C\delta_i^{2-2q}\int_{\frac{A_i}{\delta_i}}\frac{|y|^{(\alpha_i-2)q}}{(1+|y|^{\alpha_i})^{2q}}
\left|1-e^{T_i(y)+\log Q +\log\frac{\rho_0}{\int_{\Omega}e^{z-8k\pi G(x,0)}dx}+O(\lambda^{\frac{1}{2(2k-1)}})}\right|^qdx\\
&=C\delta_i^{2-2q}\int_{\frac{A_i}{\delta_i}}\frac{|y|^{(\alpha_i-2)q}}{(1+|y|^{\alpha_i})^{2q}}
|\delta_i|y|+O(\lambda^{\frac{1}{2(2k-1)}})|^qdy=O(\lambda^{\frac{2-q}{2(2k-1)}}).
\end{split}
\end{equation*}
So we have
\begin{equation}
J_1=O\Big(\lambda^{\frac{2-q}{2(2k-1)}}\Big).
\end{equation}

Next, consider $J_2$. For $l<k$ odd and fixed, similarly to the estimates in (\ref{I21}), (\ref{I12}) and using (\ref{integralofew})
\begin{equation*}
\begin{split}
\int_{A_l}|E_1|^qdx&=O(1)\Big(\int_{A_l}|e^{-Pw_l-\sum_{j\neq l}(-1)^{j-l}Pw_j+z}|^qdx+\int_{A_l}\left|\frac{e^{z-8k\pi G(x,0)}}{\int_{\Omega}e^{z-8k\pi G(x,0)}dx}\right|^qdx
\\
&\quad+\sum_{j\ even}\int_{A_l}||x|^{\alpha_j-2}e^{w_j}|^qdx\Big)
=O(\lambda^{\frac{2-q}{2(2k-1)}}).
\end{split}
\end{equation*}
Finally, we consider the case $l=k$ which is odd: using (\ref{integralofew}) and (\ref{I12})
\begin{equation*}\begin{split}
\int_{A_k}E_1^qdx&\leq C\int_{A_k}\left|\rho^+\dfrac{e^{z+\sum_{i}(-1)^iPw_i}}{\int_{\Omega} e^Wdx}-\rho_0\dfrac{e^{z-8k\pi G(x,0)}}{\int_{\Omega}e^{z-8k\pi G(x,0)}dx}\right|^qdx+C\sum_{i\ even}\int_{A_k}|x|^{(\alpha_i-2)q}e^{qw_i}dx\\
&=C\int_{A_k}\left|\rho_0\frac{e^{z+\sum_{i}(-1)^iPw_i}}{\int_{\Omega}e^{z-8k\pi G(x,0)}dx}-\rho_0\frac{e^{z-8k\pi G(x,0)}}{\int_{\Omega}e^{z-8k\pi G(x,0)}dx}\right|^qdx
+O(\lambda^{\frac{2-q}{2(2k-1)}})\\
&=O(\delta_k^{2q})+O(\lambda^{\frac{2-q}{2(2k-1)}})
=O(\lambda^{\frac{2-q}{2(2k-1)}}).
\end{split}\end{equation*}

In conclusion, one has
\begin{equation*}
\|E_1\|_q=O\Big(\lambda^{\frac{2-q}{2q(2k-1)}}\Big).
\end{equation*}
\end{proof}

\subsection{The linear theory}\label{sec4.3}

In this subsection, we consider the linear problem: given $h\in \mathcal{H}_l$, we look for $\phi\in\mathcal{H}_l$ such that
\begin{equation}\label{linearproblem-1}
\Delta \phi+\rho^+\left(\dfrac{e^W\phi}{\int_\Omega e^Wdx}-\dfrac{e^W\int_\Omega e^W\phi dx}{(\int_\Omega e^Wdx)^2}\right)+\lambda e^{-W}\phi=\Delta h\quad \mbox{in}~\Omega.
\end{equation}
First we have the following apriori estimate:
\begin{lemma}\label{aprioriestimate-1}
There exist $\lambda_0>0$ and $C>0$ such that for any $\lambda\in (0,\lambda_0)$, $h\in\mathcal{H}_l$ and $\phi\in \mathcal{H}_l$ solution of \eqref{linearproblem-1} we have
\begin{equation*}
\|\phi\|\leq C|\log \lambda| \|h\|.
\end{equation*}
\end{lemma}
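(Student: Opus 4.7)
The plan is to adapt the contradiction argument of Lemma \ref{aprioriestimate} to the multi-scale situation of Section \ref{sec4}, where all bubbles are centered at the origin but at nested scales $\delta_1\gg\delta_2\gg\cdots\gg\delta_k$ with different weights $\alpha_i$. Assume by contradiction that there exist sequences $\lambda_n\to 0$, $h_n\in\mathcal{H}_l$ with $|\log\lambda_n|\,\|h_n\|\to 0$, and $\phi_n\in\mathcal{H}_l$ solving \eqref{linearproblem-1} with $\|\phi_n\|=1$. For each $i=1,\dots,k$ define the rescaled function
\[
\tilde\phi_{n,i}(y)=\phi_n(\delta_i y),\qquad y\in\tilde\Omega_i=\Omega/\delta_i.
\]

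First I would show that $\tilde\phi_{n,i}$ is bounded in $H_{\alpha_i}(\R^2)$, the natural Hilbert space associated with the singular Liouville equation with weight $|y|^{\alpha_i-2}$. Testing \eqref{linearproblem-1} against $\phi_n$ and using the crude bounds on $e^W$ and $\lambda e^{-W}$ coming from Lemma \ref{errorestimate} and Lemma \ref{choiceofparameter} yields $\|\tilde\phi_{n,i}\|_{H_{\alpha_i}}=O(1)$. By the compact embedding $H_{\alpha_i}(\R^2)\hookrightarrow L_{\alpha_i}(\R^2)$ we may extract weak limits $\tilde\phi_i^{*}$. Passing to the limit in the equation tested against compactly supported functions $\tilde\psi\big(\tfrac{\cdot}{\delta_i}\big)$, and using the key fact (from Lemmas \ref{choiceofparameter} and \ref{errorestimate}) that at the scale $\delta_i$ only the $i$-th bubble contributes to leading order while all cross-scale contributions are $o(1)$, one obtains
\[
\Delta\tilde\phi_i^{*}+2\alpha_i^{2}\frac{|y|^{\alpha_i-2}}{(1+|y|^{\alpha_i})^{2}}\tilde\phi_i^{*}=0\qquad\text{in }\R^{2}.
\]
Since $\phi_n\in\mathcal{H}_l$, the limit $\tilde\phi_i^{*}$ is $l$-symmetric; here is where Proposition \ref{nondegeneracyofmeanfield} applies (the parameter $\tfrac{\alpha_i}{2}=2i-1$ is odd) and forces
\[
\tilde\phi_i^{*}(y)=\gamma_i\,\frac{1-|y|^{\alpha_i}}{1+|y|^{\alpha_i}}
\]
for some $\gamma_i\in\R$.

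Next I would identify the weak limit of $\phi_n$ in $H^1_0(\Omega)$. Testing against any $\psi\in C_c^{\infty}(\Omega\setminus\{0\})$ and using that $e^W\to e^{z-8k\pi G(\cdot,0)}$ in $L^1_{\mathrm{loc}}(\Omega\setminus\{0\})$ (Remark \ref{rem1} and the computation of $\int_\Omega e^W$) while $\lambda e^{-W}\to 0$ on compact sets away from $0$, one finds that any weak limit $\phi^{*}$ solves the linearization of \eqref{equationofz-1}; the non-degeneracy of $z$ assumed in Section \ref{sec4.1} gives $\phi^{*}\equiv0$, hence $\phi_n\to0$ strongly in $L^q(\Omega)$ for every $q\geq 2$.

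To kill the $\gamma_i$'s I would test \eqref{linearproblem-1} successively against the projections of the radial kernels $PZ_i^{0}$, where $Z_i^{0}(x)=\big(1-(|x|/\delta_i)^{\alpha_i}\big)/\big(1+(|x|/\delta_i)^{\alpha_i}\big)$, for $i=1,\ldots,k$. Each such test produces, after the rescaling $x=\delta_i y$ and passage to the limit, an integral identity of the form
\[
\lim_{n\to\infty}|\log\lambda_n|\int_{\tilde\Omega_i}\frac{|y|^{\alpha_i-2}}{(1+|y|^{\alpha_i})^{2}}\tilde\phi_{n,i}\,dy=0,
\]
together with a linear system in the $\gamma_i$'s coming from the nontrivial bubble-against-bubble pairings (controlled by the nested-scale estimates in Lemma \ref{choiceofparameter}); testing also against each $Pw_i$, analogously to Step 2 of Lemma \ref{aprioriestimate}, produces the remaining equations and forces $\gamma_1=\cdots=\gamma_k=0$. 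Finally, multiplying the equation by $\phi_n$ itself and letting $n\to\infty$ gives $\int_\Omega|\nabla\phi_n|^{2}\to 0$, contradicting $\|\phi_n\|=1$.

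The hard part will be Step 4: unlike Lemma \ref{aprioriestimate}, where bubbles sit at distinct points and the local mass computations decouple automatically, here all bubbles are centered at $0$ and separated only by scale. Consequently the pairings $\langle e^{w_j}\tilde\phi_{n,j},PZ_i^{0}\rangle$ for $j\neq i$ must be estimated using the sharp cross-scale bounds of Lemma \ref{choiceofparameter} and the precise choices \eqref{parameter1}--\eqref{parameter3}; these are the genuinely new technical inputs that replace the spatial decoupling used in Section \ref{sec3}.
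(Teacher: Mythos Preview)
Your overall architecture matches the paper's proof: contradiction, rescaling at each scale $\delta_i$, identifying the rescaled limits via Proposition~\ref{nondegeneracyofmeanfield}, killing the global weak limit via the non-degeneracy of $z$, then testing against $PZ_i^0$ and $Pw_i$ to force $\gamma_i=0$. However, there are two places where you underestimate the role of the nonlocal term $\dfrac{e^W\int_\Omega e^W\phi}{(\int_\Omega e^W)^2}$ and the parity of $i$.

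First, your claim that the limit equation at scale $\delta_i$ is simply $\Delta\tilde\phi_i^{*}+2\alpha_i^2\dfrac{|y|^{\alpha_i-2}}{(1+|y|^{\alpha_i})^2}\tilde\phi_i^{*}=0$ is only correct for $i$ odd. For $i$ even, $e^W$ (not $\lambda e^{-W}$) carries the $i$-th bubble, and the nonlocal part of the operator survives in the limit: what one actually obtains is that $\tilde\phi_i^{*}$ minus the constant $\tfrac{1}{\rho^+}\int_{\R^2}\tfrac{2\alpha_i^2|y|^{\alpha_i-2}}{(1+|y|^{\alpha_i})^2}\tilde\phi_i^{*}\,dy$ solves the linearized singular Liouville equation. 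Proposition~\ref{nondegeneracyofmeanfield} then gives $\tilde\phi_i^{*}=\gamma_i\tfrac{1-|y|^{\alpha_i}}{1+|y|^{\alpha_i}}+c_i$, and to conclude $c_i=0$ one needs the non-resonance condition $\rho^+\neq 4\pi\alpha_i$ (cf.\ the paper's Step~2). This is a genuine extra ingredient you omit.

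Second, testing against $PZ_i^0$ does \emph{not} yield $\displaystyle\lim_{n}|\log\lambda_n|\int\tfrac{|y|^{\alpha_i-2}}{(1+|y|^{\alpha_i})^2}\tilde\phi_{n,i}\,dy=0$ for all $i$. Writing $\sigma_i(\lambda)$ for this $|\log\lambda|$-weighted integral, one only gets $\sigma_i=o(1)$ for $i$ odd; for $i$ even the $PZ_i^0$-tests produce a coupled system relating all the even $\sigma_j$'s to each other and to $|\log\lambda|\int_\Omega e^{z-8k\pi G}\phi$ (which is not a priori $o(1)$). These relations must be carried into the $Pw_i$-tests: the equations one obtains there contain the $\gamma_j$'s \emph{and} all the $\sigma_j$'s together with the same $|\log\lambda|$-weighted integral, and it is only by plugging in the Step~3 relations that the $\sigma$-terms collapse to $o(1)$, leaving $4\pi\alpha_i\bigl(\gamma_i+2\sum_{j>i}\gamma_j\bigr)=o(1)$ and hence $\gamma_i=0$. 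Your proposal treats the $PZ_i^0$ and $Pw_i$ tests as producing two independent linear systems, but in fact the first must be fed into the second; this coupling, driven by the nonlocal term, is the main new feature compared to Lemma~\ref{aprioriestimate}.
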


\smallskip

We start by listing some straightforward integrals which will be useful in the proof of Lemma \ref{aprioriestimate-1}.
\begin{lemma}
The following hold:
\begin{equation}\label{integral1}
\int_{\R^2}\frac{|y|^{\alpha_i-2}}{(1+|y|^{\alpha_i})^2}\frac{1-|y|^{\alpha_i}}{1+|y|^{\alpha_i}}dy=0,
\end{equation}
\begin{equation}\label{integral2}
\int_{\R^2}2\alpha_i^2\frac{|y|^{\alpha_i-2}}{(1+|y|^{\alpha_i})^2}\frac{1-|y|^{\alpha_i}}{1+|y|^{\alpha_i}}\log (1+|y|^{\alpha_i})^2dy=-4\pi \alpha_i,
\end{equation}
\begin{equation}\label{integral3}
\int_{\R^2}2\alpha_i^2\frac{|y|^{\alpha_i-2}}{(1+|y|^{\alpha_i})^2}\frac{1-|y|^{\alpha_i}}{1+|y|^{\alpha_i}}\log |y|dy=-4\pi.
\end{equation}
\end{lemma}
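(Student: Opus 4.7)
The plan is to reduce each of the three two-dimensional integrals to an elementary one-dimensional integral by passing to polar coordinates and then substituting $t=|y|^{\alpha_i}$. Writing $r=|y|$, the identity $r^{\alpha_i-2}\cdot r\,dr = \frac{1}{\alpha_i}\,dt$ together with angular integration produces a factor $\frac{2\pi}{\alpha_i}$, and each integrand collapses to the common shape $\frac{1-t}{(1+t)^3}\,\omega(t)\,dt$, where $\omega(t)=1$ for \eqref{integral1}, $\omega(t)=2\log(1+t)$ for \eqref{integral2} (reading $\log(1+|y|^{\alpha_i})^2$ as $2\log(1+|y|^{\alpha_i})$), and $\omega(t)=\frac{1}{\alpha_i}\log t$ for \eqref{integral3}.

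The key observation underlying all three calculations is that $\frac{1-t}{(1+t)^3}$ admits the clean primitive $v(t)=\frac{t}{(1+t)^2}$, as one checks by direct differentiation (equivalently, $\frac{1-t}{(1+t)^3}=\frac{2}{(1+t)^3}-\frac{1}{(1+t)^2}$). This immediately gives \eqref{integral1} as $\frac{2\pi}{\alpha_i}(1-1)=0$. For the other two I would integrate by parts with $u=\log(1+t)$ or $u=\log t$ and $dv=\frac{1-t}{(1+t)^3}\,dt$; the boundary term $u(t)v(t)\big|_{0}^{\infty}$ vanishes in both cases, the only check being that $\frac{t\log(1+t)}{(1+t)^2}$ and $\frac{t\log t}{(1+t)^2}$ tend to $0$ at both endpoints, which is immediate from $t\log t \to 0$ as $t\to 0^+$ and $\log t/t\to 0$ as $t\to +\infty$. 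The residual integrals are standard: $\int_0^\infty \frac{t}{(1+t)^3}\,dt = \frac{1}{2}$ (via $s=1+t$) and $\int_0^\infty \frac{du}{(1+t)^2}\,dt = 1$.

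Putting together the constants, \eqref{integral2} evaluates to $2\alpha_i^2\cdot\frac{2\pi}{\alpha_i}\cdot 2\cdot\bigl(-\tfrac{1}{2}\bigr)=-4\pi\alpha_i$, while \eqref{integral3} gives $2\alpha_i^2\cdot\frac{2\pi}{\alpha_i}\cdot\frac{1}{\alpha_i}\cdot(-1)=-4\pi$. Since every step reduces to one-variable calculus, there is no genuine obstacle; the only item requiring some care is the bookkeeping of the numerical prefactors from the polar change of variables and the extra $\frac{1}{\alpha_i}$ coming from $\log|y|=\frac{1}{\alpha_i}\log t$.
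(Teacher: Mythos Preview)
Your argument is correct. The paper does not actually prove this lemma; it merely introduces the three identities as ``some straightforward integrals'' and states them without proof, so there is no competing approach to compare with. Your reduction via polar coordinates and the substitution $t=|y|^{\alpha_i}$, together with the primitive $v(t)=t/(1+t)^2$ of $(1-t)/(1+t)^3$ and the two integrations by parts, is precisely the kind of elementary computation the authors evidently have in mind, and all the constants check out. The only cosmetic slip is the typo $\int_0^\infty \frac{du}{(1+t)^2}\,dt$, which should of course read $\int_0^\infty \frac{dt}{(1+t)^2}=1$.
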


\medskip
\noindent{\bf Proof of Lemma \ref{aprioriestimate-1}.}
We prove it by contradiction. Assume there exist $\lambda_n\to 0$, $h_n\in \mathcal{H}_l$ and $\phi_n \in \mathcal{H}_l$ which solves (\ref{linearproblem-1}) such that
\begin{equation*}
\|\phi_n\|=1, \quad \ |\log \lambda_n|\|h_n\|\to 0 \quad \mbox{ as }n\to \infty.
\end{equation*}
In the following, we omit the index $n$ for simplicity.
For $i=1, \cdots, k$, define $\tilde{\phi}_i(y)$ as
\begin{equation*}
\begin{split}
\tilde{\phi}_i(y)=\begin{cases}
\phi_i(\delta_i y), \quad &y\in \tilde{\Omega}_i=\frac{\Omega}{\delta_i},\\
0,  &y\in \R^2\setminus \tilde{\Omega}_i.
\end{cases}
\end{split}
\end{equation*}

\noindent{\bf Step 1. } We claim that
\begin{equation}\label{limit2-1}
\phi\to 0~\mbox{ weakly \ in \ $H_0^1(\Omega)$\ and \ strongly \ in \ $L^q(\Omega)$ \ for $q\geq 2$}.
\end{equation}
and
$$\tilde{\phi}_i \mbox{ is \ bounded\ in }H_{\alpha_i}(\R^2)$$

\medskip

Letting $\psi\in C_0^\infty(\Omega\setminus \{0\})$ and multiplying equation (\ref{linearproblem-1}) by $\psi$ and integrating, one has
\begin{equation}\label{boundnessofphi}
\begin{split}
-\int_{\Omega}\nabla \psi \nabla \phi dx+\int_{\Omega}\lambda e^{-W}\phi\psi dx+\rho^+\left(\dfrac{\int_{\Omega}e^W\phi\psi dx}{\int_{\Omega}e^Wdx}-\dfrac{\int_{\Omega}e^W\phi dx\int_{\Omega} e^W\psi dx}{(\int_{\Omega}e^Wdx)^2} \right)=\int_{\Omega }\Delta h \psi dx.
\end{split}
\end{equation}
By the assumption on $\phi$, using the fact that in compact sets of $\Omega\setminus\{0\}$,
$$e^W=e^{z(x)-8k\pi G(x,0)}+O(\lambda)\quad \mbox{and} \quad
\lambda e^{-W}=O(\lambda),$$
one has
\begin{equation*}
\phi \to \phi^* \mbox{ weakly \ in \ $H_0^1(\Omega)$ \ and \ strongly \ in \ $L^q(\Omega)$ \ for \ $q\geq 2$}
\end{equation*}
where
\begin{equation*}
\begin{split}
-\int_{\Omega}\nabla \phi^* \nabla \psi dx+\rho^+\dfrac{\int_{\Omega} e^{z-8k\pi G(x,0)}\phi^*\psi dx}{\int_{\Omega}e^{z-8k\pi G(x,0)}}-\rho^+\dfrac{\int_{\Omega}e^{z-8k\pi G(x,0)}\psi dx\int_{\Omega}e^{z-8k\pi G(x,0)}\phi^*dx}{(\int_{\Omega}e^{z-8k\pi G(x,0)}dx)^2}=0.
\end{split}
\end{equation*}
So $\|\phi^*\|_{H_0^1(\Omega)}\leq 1$ and it solves
\begin{equation*}
\Delta \phi^*+\rho^+\left(\dfrac{e^{z-8k\pi G(x,0)}\phi^*}{\int_{\Omega}e^{z-8k\pi G(x,0)}dx}-\dfrac{e^{z-8k\pi G(x,0)}\int_{\Omega}e^{z-8k\pi G(x,0)}\phi^*dx}{(\int_{\Omega}e^{z-8k\pi G(x,0)}dx)^2}\right)=0.
\end{equation*}
By the non-degeneracy of $z(x)$ we get $\phi^*=0$.  Thus (\ref{limit2-1}) is proved.

\medskip

Now we prove that $\tilde{\phi}_i$ is bounded in $H_{\alpha_i}(\R^2)$. First it is easy to check that
\begin{equation}\label{hbound}
\int_{\R^2}|\nabla \tilde{\phi}_i|^2dy=\int_{\Omega}|\nabla \phi_i|^2dx\leq 1 \quad \mbox{ for }i=1,\cdots,k.
\end{equation}
We multiply (\ref{linearproblem-1}) again by $\phi$ and integrate,
\begin{equation}\label{eq3}
\int_{\Omega}|\nabla \phi|^2dx-\int_{\Omega}\lambda e^{-W}\phi^2dx-\rho^+\Big(\frac{\int_{\Omega}e^W\phi^2dx}{\int_{\Omega}e^Wdx}
-\frac{(\int_{\Omega}e^W\phi dx)^2}{(\int_{\Omega}e^Wdx)^2} \Big)=\int_{\Omega}\nabla h\nabla \phi dx.
\end{equation}
From the above equation, one can get that,
\begin{equation*}
\begin{split}
\int_{\Omega}\lambda e^{-W} \phi_i^2dx&\leq \int_{\Omega}|\nabla \phi|^2dx-\rho^+\Big(\frac{\int_{\Omega}e^W\phi^2dx}{\int_{\Omega}e^Wdx}
-\frac{(\int_{\Omega}e^W\phi dx)^2}{(\int_{\Omega}e^Wdx)^2} \Big)-\int_{\Omega}\nabla h\nabla \phi dx\\
&\leq 1+o(1)+\|h\|=O(1)
\end{split}
\end{equation*}
where we used (\ref{limit2-1}). Let $i$ be odd. Lemma \ref{errorestimate} gives 
\begin{equation*}
\int_{\Omega}|x|^{\alpha_i-2}e^{w_i}\phi^2dx\leq C,
\end{equation*}
or equivalently
\begin{equation*}
\int_{\R^2}\frac{|y|^{\alpha_i-2}}{(1+|y|^{\alpha_i})^2}\tilde{\phi}_i^2dy\leq C.
\end{equation*}
Combined with (\ref{hbound}), we deduce that $\tilde{\phi}_i$ is bounded in $H_{\alpha_i}(\R^2)$ when $i$ is odd.

We consider now the case for $i$ even. From (\ref{integralofew}), $e^W=e^{z-8k\pi G(x,0)}+O(\lambda)$ uniformly on compact sets of $\Omega\setminus\{0\}$ and recalling (\ref{limit2-1}), we get that
\begin{equation}\label{boundness1}
\int_{\Omega}e^W\phi dx=O(1).
\end{equation}
Moreover, by (\ref{eq3}) one can get that
\begin{equation}\label{boundness2}
\rho^+\left(\dfrac{\int_{\Omega}e^W\phi^2dx}{\int_{\Omega}e^Wdx}
-\dfrac{(\int_{\Omega}e^W\phi dx)^2}{(\int_{\Omega}e^Wdx)^2} \right)=O(1).
\end{equation}
Combining (\ref{boundness1}) and (\ref{boundness2}), we have
\begin{equation}\label{eq8}
\int_{\Omega}e^W\phi^2dx=O(1).
\end{equation}
By Lemma \ref{errorestimate}, (\ref{limit2-1}) and (\ref{eq8}), $\int_{\Omega}|x|^{\alpha_i-2}e^{w_i}\phi^2dx=O(1)$ for $i$ even, which implies that
\begin{equation*}
\int_{\R^2}\frac{|y|^{\alpha_i-2}}{(1+|y|^{\alpha_i})^2}\tilde{\phi}_i^2dy=O(1).
\end{equation*}
So we get that also for $i$ even, $\tilde{\phi}_i$ is bounded in $H_{\alpha_i}(\R^2)$.

\medskip

\noindent {\bf Step 2.}
 We claim that
  \begin{equation}\label{limit1-1}
\tilde{\phi}_i(y)\to \gamma_i\frac{1-|y|^2}{1+|y|^2} \mbox{ weakly \ in\  $H_{\alpha_i}(\R^2)$ \ and \ strongly \ in \ $L_{\alpha_i}(\R^2)$}, \gamma_i\in\mathbb{R}.
\end{equation}

\medskip

From Step 1, we know that $\tilde{\phi}_i\to \tilde{\phi}_i^*$ weakly  in  $H_{\alpha_i}(\R^2)$  and strongly  in  $L_{\alpha_i}(\R^2)$. Consider $\tilde{\psi}\in C_0^\infty(\R^2\setminus\{0\})$ and let $\mathcal{K}$ be its support. For $n$ large, one has
\begin{equation*}
\mathcal{K}\subset\frac{A_i}{\delta_i}=\left\{y\in \tilde{\Omega}_i, \sqrt{\frac{\delta_{i-1}}{\delta_i}}\leq |y|\leq \sqrt{\frac{\delta_{i+1}}{\delta_i}}\right\}.
\end{equation*}
Define $\psi_i=\tilde{\psi}(\frac{x}{\delta_i})$. Multiplying (\ref{linearproblem-1}) by $\psi_i$ and integrating over $\Omega$,
\begin{equation}
\label{psi_i-1}
\begin{split}
\int_{\Omega}\nabla \phi\nabla \psi_idx
-\rho^+\left( \dfrac{\int_{\Omega}e^W\phi\psi_idx}{\int_{\Omega}e^Wdx}
-\dfrac{\int_{\Omega}e^W\phi dx\int_{\Omega}e^W\psi_idx}{(\int_{\Omega}e^Wdx)^2} \right)
-\int_{\Omega}\lambda e^{-W}\phi \psi_idx
=\int_{\Omega}\nabla h\nabla \psi_idx.
\end{split}
\end{equation}
Consider first $i$ even. According to Lemma \ref{errorestimate}, one has
\begin{equation*}
\begin{split}
\rho^+\dfrac{\int_{\Omega}e^W\phi dx}{\int_{\Omega}e^Wdx}&=\sum_{j\ even}\int_{\Omega} |x|^{\alpha_j-2}e^{w_j}\phi dx+\rho_0\dfrac{\int_{\Omega}e^{z-8k\pi G(x,0)}\phi dx}{\int_{\Omega}e^{z-8k\pi G(x,0)}dx}+o(1)\\
&=\sum_{j\ even}\int_{\R^2}\frac{2\alpha_j^2|y|^{\alpha_j-2}\tilde{\phi}_j}{(1+|y|^{\alpha_i})^2}dy+\rho_0\dfrac{\int_{\Omega}e^{z-8k\pi G(x,0)}\phi dx}{\int_{\Omega}e^{z-8k\pi G(x,0)}dx}+o(1)\\
&=\sum_{j\ even}\int_{\R^2}\frac{2\alpha_j^2|y|^{\alpha_j-2}\tilde{\phi}_j^*}{(1+|y|^{\alpha_i})^2}dy+o(1)
\end{split}
\end{equation*}
where in the last line we used (\ref{limit2-1}). Similarly, one has
\begin{equation*}
\begin{split}
\rho^+\dfrac{\int_{\Omega}e^W\phi\psi_idx}{\int_{\Omega}e^Wdx}&=\int_{\R^2} \dfrac{2\alpha_i^2|y|^{\alpha_i-2}}{(1+|y|^{\alpha_i})^2}\tilde{\psi}\tilde{\phi}_i^*dy+o(1),\\
&\\
\rho^+\dfrac{\int_{\Omega}e^W\psi_idx}{\int_{\Omega}e^Wdx}&=\int_{\R^2} \frac{2\alpha_i^2|y|^{\alpha_i-2}}{(1+|y|^{\alpha_i})^2}\tilde{\psi}dy+o(1),\\
&\\
\lambda \int_{\Omega}e^{-W}\phi\psi_idx&=\sum_{j\ odd}\int_{\Omega} |x|^{\alpha_j-2}e^{w_j}\phi\psi_idx+o(1)=o(1).
\end{split}
\end{equation*}
Thus, $\tilde{\phi}_i^*$ satisfies
\begin{equation*}
\begin{aligned}
\int_{\R^2}\nabla \tilde{\phi}_i^*\nabla \tilde{\psi}_idy-\int_{\R^2}\dfrac{2\alpha_i^2|y|^{\alpha_i-2}}{(1+|y|^{\alpha_i})^2}\tilde{\phi}_i^*\tilde{\psi}dy=-\dfrac{1}{\rho^+}\Big(\int_{\R^2}\dfrac{2\alpha_i^2|y|^{\alpha_i-2}}{(1+|y|^{\alpha_i})^2}\tilde{\psi}dy \Big)\Big(\int_{\R^2} \dfrac{2\alpha_i^2|y|^{\alpha_i-2}}{(1+|y|^{\alpha_i})^2}\tilde{\phi}_i^*dy\Big).
\end{aligned}
\end{equation*}
From this we deduce that the function
\begin{equation*}
\tilde{\phi}_i^*-\frac{1}{\rho^+}\int_{\R^2}\frac{2\alpha_i^2|y|^{\alpha_i-2}}{(1+|y|^{\alpha_i})^2}\tilde{\phi}_i^*dy\in H_{\alpha_i}(\R^2)
\end{equation*}
is a solution of
\begin{equation}
\label{4.regular1}
\Delta \phi+\frac{2\alpha_i^2|y|^{\alpha_i-2}}{(1+|y|^{\alpha_i})^2}\phi=0\quad \mbox{ in }\quad \R^2\setminus\{0\}.
\end{equation}
Since $\int |\nabla \tilde{\phi}_i^*|^2dy\leq 1$, $\tilde{\phi}_i^*$ is a solution in the whole space $\R^2$. By Proposition \ref{nondegeneracyofmeanfield}, we get that $\tilde{\phi}_i^*-\frac{1}{\rho^+}\int_{\R^2}\frac{2\alpha_i^2|y|^{\alpha_i-2}}{(1+|y|^{\alpha_i})^2}\tilde{\phi}_i^*dy
=\gamma_i \frac{1-|y|^{\alpha_i}}{1+|y|^{\alpha_i}}$ for some $\gamma_i$. By(\ref{integral1}) one has
\begin{equation*}
\int_{\R^2}\dfrac{2\alpha_i^2|y|^{\alpha_i-2}}{(1+|y|^{\alpha_i})^2}\tilde{\phi}_i^*dy
=\dfrac{1}{\rho^+}\int_{\R^2}\frac{2\alpha_i^2|y|^{\alpha_i-2}}{(1+|y|^{\alpha_i})^2}dy\int_{\R^2}\dfrac{2\alpha_i^2|y|^{\alpha_i-2}}{(1+|y|^{\alpha_i})^2}\tilde{\phi}_i^*dy
\end{equation*}
which implies that
\begin{equation*}
\left(\frac{4\pi \alpha_i}{\rho^+}-1\right)
\int_{\R^2}\dfrac{2\alpha_i^2|y|^{\alpha_i-2}}
{(1+|y|^{\alpha_i})^2}\tilde{\phi}_i^*dy=0.
\end{equation*}
Since $\rho^+\neq 4\pi \alpha_i$ we deduce that
\begin{equation*}
\tilde{\phi}_i^*=\gamma_i\frac{1-|y|^{\alpha_i}}{1+|y|^{\alpha_i}}.
\end{equation*}
Hence, (\ref{limit1-1}) is proved for $i$ even.

We next turn to $i$ odd. In this case, we consider (\ref{psi_i-1}) with $i$ odd and estimate each term separately,
\begin{equation*}
\begin{split}
\int_{\Omega}e^W\psi_idx=o(1), \quad \int_{\Omega}e^W\phi\psi_idx=o(1),
\end{split}
\end{equation*}
and
\begin{equation*}
\begin{split}
\lambda\int_{\Omega} e^{-W}\phi\psi_idx=\int_{\Omega}|x|^{\alpha_i-2}e^{w_i}\phi\psi_idx+o(1)=\int_{\R^2}\frac{2\alpha_i^2|y|^{\alpha_i-2
}}{(1+|y|^{\alpha_i})^2}\tilde{\phi}_i\tilde{\psi}dy
+o(1).
\end{split}
\end{equation*}
Hence, $\tilde{\phi}_i^*$ satisfies
\begin{equation*}
\int_{\R^2}\nabla \tilde{\phi}_i^*\nabla \tilde{\psi}dy-\int_{\R^2}\frac{2\alpha_i^2|y|^{\alpha_i-2}}{(1+|y|^{\alpha_i})^2}\tilde{\phi}^*_i\tilde{\psi}dy
=0,
\end{equation*}
namely $\tilde{\phi}_i^*$ is a solution of
\begin{equation*}
\Delta\phi+\dfrac{2\alpha_i^2|y|^{\alpha_i-2}}{(1+|y|^{\alpha_i})^2}\phi=0\quad \mbox{ in }\quad \R^2\setminus\{0\},
\end{equation*}
and again we conclude by using Proposition \ref{nondegeneracyofmeanfield}.

\medskip

\noindent{\bf Step 3.} In this step, we will prove some estimates on the speed of convergence.
We set
\begin{equation}\label{sigma}
\sigma_i(\lambda):=|\log\lambda|\int_{\R^2}2\alpha_i^2
\frac{|y|^{\alpha_i-2}}{(1+|y|^{\alpha_i})^2}\tilde{\phi}_idy.
\end{equation}
We will show that
\begin{equation*}
\begin{cases}
\sigma_i(\lambda)=o(1)&\mbox{ for \ i\ odd}\\
\\
\sigma_i(\lambda)-\dfrac{4\pi\alpha_i}{\rho^+}\Big(\sum_{j\ even}\sigma_j(\lambda)+|\log\lambda|\rho_0\dfrac{\int_{\Omega}e^{z-8k\pi G(x,0)}\phi dx}{\int_{\Omega}e^{z-8k\pi G(x,0)}dx}\Big)=o(1)\quad &\mbox{ for \ i \ even}.
\end{cases}
\end{equation*}
Set $Z_i^0=\frac{\delta_i^{\alpha_i}-|x|^{\alpha_i}}{\delta_i^{\alpha_i}+|x|^{\alpha_i}}$, we know that $Z_i^0$ is a solution of
\begin{equation*}
\Delta Z+|x|^{\alpha_i-2}e^{w_i}Z=0\quad\mbox{ in }\quad \R^2.
\end{equation*}
Let $PZ_i^0$ be its the projection onto $H_0^1(\Omega)$, that is
\begin{equation*}
\Delta PZ_i^0+|x|^{\alpha_i-2}e^{w_i}Z_i^0=0 \ \mbox{ in } \ \Omega, \quad PZ_i^0=0 \ \mbox{ on } \ \partial\Omega .
\end{equation*}
By maximum principle one can show
\begin{equation}\label{projection}
PZ_i^0=Z_i+1+O(\delta_i^{\alpha_i})=\frac{2\delta_i^{\alpha_i}}{\delta_i^{\alpha_i}+|x|^{\alpha_i}}
+O(\delta_i^{\alpha_i}),
\end{equation}
which implies
\begin{equation}\label{projection1}
PZ_i^0(\delta_jy)=
\begin{cases}
O\left(\frac{1}{|y|^{\alpha_i}}(\frac{\delta_i}{\delta_j})^{\alpha_i}\right) +O(\delta_i^{\alpha_i})&\mbox{ for } i<j,\\
\\
\frac{2}{1+|y|^{\alpha_i}}+O(\delta_i^{\alpha_i}),\quad &\mbox{ for } i=j,\\
\\
2+O\left(|y|^{\alpha_i}(\frac{\delta_j}{\delta_i})^{\alpha_i}\right) +O(\delta_i^{\alpha_i})&\mbox{ for } i>j,
\end{cases}
\end{equation}
and
\begin{equation}\label{projection2}
\|PZ_i^0\|_q^q=O(\delta_i^2), \ q>1.
\end{equation}
First we consider $i$ even. Multiply (\ref{linearproblem-1}) by $PZ_i^0$ and integrate over $\Omega$,
\begin{equation}
\label{eq4}
\begin{split}
\int_{\Omega}\nabla \phi\nabla PZ_i^0dx
-\rho^+\left(\dfrac{\int_{\Omega}e^W\phi PZ_i^0dx}{\int_{\Omega}e^Wdx}
-\dfrac{\int_{\Omega}e^W\phi dx\int_{\Omega} e^W PZ_i^0dx}
{(\int_{\Omega}e^Wdx)^2} \right)
-\int_{\Omega}\lambda e^{-W}\phi PZ_i^0dx=-\int_{\Omega}\nabla h\nabla PZ_i^0dx.
\end{split}
\end{equation}
For the first term,
\begin{equation}\label{term1}
\begin{split}
\int_{\Omega}\nabla \phi \nabla PZ_i^0dx=-\int_{\Omega}\phi \Delta PZ_i^0dx
=\int_{\Omega}|x|^{\alpha_i-2}e^{w_i}\phi Z_i^0 dx.
\end{split}
\end{equation}
By Lemma \ref{errorestimate}, (\ref{limit2-1}), (\ref{projection}) and (\ref{projection2}),
\begin{equation}\label{term2}
\begin{split}
&\dfrac{\int_{\Omega}e^W\phi PZ_i^0dx}{\int_{\Omega}e^Wdx}=\sum_{j\ even}\int_{\Omega}|x|^{\alpha_j-2}e^{w_j}\phi PZ_i^0dx
+\rho_0\dfrac{\int_{\Omega}e^{z-8k\pi G(x,0)}PZ_i^0\phi dx}{\int _{\Omega}e^{z-8k\pi G(x,0)}dx}+o\Bigr(\frac{1}{|\log \lambda|}\Bigr)\\
&=\int_{\Omega}|x|^{\alpha_i-2}e^{w_i}\phi dx+\int_{\Omega}|x|^{\alpha_i-2}e^{w_i}\phi Z_i^0 dx+\sum_{j\neq i \ even}\int_{\Omega}|x|^{\alpha_j-2}e^{w_j}\phi PZ_i^0 dx +o\Bigr(\frac{1}{|\log \lambda|}\Bigr)
\end{split}
\end{equation}

For $j\neq i$,
\begin{equation}\label{eq5}
\begin{split}
&\int_{\Omega}|x|^{\alpha_j-2}e^{w_j}\phi PZ_i^0dx=
\int_{\tilde{\Omega}_j}
\frac{2\alpha_j^2|y|^{\alpha_j-2}}{(1+|y|^{\alpha_j})^2}\tilde{\phi}_j PZ_i^0(\delta_j y)dy\\
&=\begin{cases}
\int_{\R^2}\frac{4\alpha_j^2|y|^{\alpha_j-2}}{(1+|y|^{\alpha_j})^2}\tilde{\phi}_jdy
+O\Big(
\int_{\tilde{\Omega}_j}\Big(|y|^{\alpha_i}
(\frac{\delta_j}{\delta_i})^{\alpha_i} +\delta_i^{\alpha_i}\Big)
\frac{|y|^{\alpha_j-2}}{(1+|y|^{\alpha_j})^2}
\tilde{\phi}_j
\Big)dy,~ &\mbox{ for }i>j,\\
&\\
O\Big(
\int_{\tilde{\Omega}_j}\Big(
\frac{1}{|y|^{\alpha_i}}(\frac{\delta_i}{\delta_j})^{\alpha_i} +\delta_i^{\alpha_i}\Big)
\frac{2\alpha_j^2|y|^{\alpha_j-2}}{(1+|y|^{\alpha_j})^2}
\tilde{\phi}_j\Big)dy,&\mbox{ for }i<j,
\end{cases} \\
& \\
&=\begin{cases}
\frac{2\sigma_j(\lambda)}{|\log \lambda|}+o\bigr(\frac{1}{|\log\lambda|}\bigr),\quad &\mbox{ for }i>j,\\
&\\
o\bigr(\frac{1}{|\log\lambda|}\bigr), &\mbox{ for }i<j,
\end{cases}
\end{split}
\end{equation}
where we used (\ref{projection1}). Next, by Lemma \ref{errorestimate} and (\ref{projection2}),
\begin{equation}\label{term31}
\begin{split}
\rho^+\dfrac{\int_{\Omega}e^WPZ_i^0dx}{\int_{\Omega}e^Wdx}
&=\sum_{j\ even}\int_{\Omega} |x|^{\alpha_j-2}e^{w_j}PZ_i^0dx+
\rho_0\dfrac{\int_{\Omega}e^{z-8k\pi G(x,0)}PZ_i^0dx}{\int_{\Omega}e^{z-8k\pi G(x,0)}dx}+o\Bigr(\frac{1}{|\log\lambda|}\Bigr)\\
&=\int_{\Omega}|x|^{\alpha_i-2}e^{w_i}PZ_i^0 dx+\sum_{j\neq i \ even }\int_{\Omega} |x|^{\alpha_j-2}e^{w_j}PZ_i^0dx+o\Bigr(\frac{1}{|\log\lambda|}\Bigr)\\
&=4\pi \alpha_i+\sum_{j<i\ even}8\pi \alpha_j+o\Bigr(\frac{1}{|\log \lambda|}\Bigr),
\end{split}
\end{equation}
where we replace $\phi$ by $1$ in the estimate of (\ref{eq5}) and (\ref{term2}). Moreover,
\begin{equation}\label{term32}
\begin{split}
\rho^+\dfrac{\int_{\Omega}e^W\phi dx}{\int_{\Omega}e^Wdx}&=\sum_{i \ even}\int_{\Omega} |x|^{\alpha_i-2}e^{w_i}\phi dx
+\rho_0\dfrac{\int_{\Omega}e^{z-8k\pi G(x,0)}\phi dx}{\int_{\Omega}e^{z-8k\pi G(x,0)}dx}+o\Bigr(\frac{1}{|\log\lambda|}\Bigr)
\end{split}
\end{equation}
and again by Lemma \ref{errorestimate} and (\ref{eq5})
\begin{equation}\label{term4}
\begin{split}
\lambda\int_{\Omega}e^{-W}\phi PZ_i^0dx&=\sum_{j\ odd}\int_{\Omega} |x|^{\alpha_j-2}e^{w_j}\phi PZ_i^0dx+o\Bigr(\frac{1}{|\log\lambda|}\Bigr)=\sum_{j<i\ odd}\frac{2\sigma_j(\lambda)}{|\log\lambda|}+o\Bigr(\frac{1}{|\log \lambda|}\Bigr).
\end{split}
\end{equation}
Finally, for the last term,
\begin{equation}\label{term5}
\int_{\Omega}\nabla h\nabla PZ_i^0dx=O(\|h\|\|PZ_i^0\|)=o\Bigr(\frac{1}{|\log\lambda|}\Bigr).
\end{equation}
Combining (\ref{eq4}), (\ref{eq5}), (\ref{term1}), (\ref{term2}), (\ref{term31}), (\ref{term32}), (\ref{term4}) and (\ref{term5}), we deduce that for $i$ even,
\begin{equation}\label{even}
\begin{split}
\frac{4\pi(\alpha_i+\sum_{j<i\ even}2\alpha_j)}{\rho^+}\Big(\sum_{j\ even}\frac{\sigma_j(\lambda)}{|\log\lambda|}+\rho_0\dfrac{\int_{\Omega}e^{z-8k\pi G(x,0)}\phi dx}{\int_{\Omega}e^{z-8k\pi G(x,0)}dx}\Big)
-\frac{1}{|\log\lambda|}(\sigma_i(\lambda)+\sum_{j<i}2\sigma_j(\lambda))
=o\Bigr(\frac{1}{|\log\lambda|}\Bigr).
\end{split}
\end{equation}

Next we consider (\ref{eq4}) for $i$ odd. In this case, again we estimate (\ref{eq4}) term by term. Similarly to the estimate for $i$ even, first by Lemma \ref{errorestimate}, (\ref{projection2}) and (\ref{eq5}), one has
\begin{equation*}
\begin{split}
\rho^+\dfrac{\int_{\Omega}e^W\phi PZ_i^0dx}{\int_{\Omega}e^Wdx}&=\sum_{j\ even}\int_{\Omega}|x|^{\alpha_j-2}e^{w_j}\phi PZ_i^0dx +\rho_0\dfrac{\int_{\Omega}e^{z-8k\pi G(x,0)}\phi PZ_i^0dx}{\int_{\Omega}e^{z-8k\pi G(x,0)}dx}+o\Bigr(\frac{1}{|\log\lambda|}\Bigr)\\
&=\sum_{j<i\ even }\frac{2\sigma_j(\lambda)}{|\log\lambda|}+o\Bigr(\frac{1}{|\log\lambda|}\Bigr),
\end{split}
\end{equation*}
\begin{equation*}
\begin{split}
\rho^+\dfrac{\int_{\Omega}e^W PZ_i^0dx}{\int_{\Omega}e^Wdx}&=\sum_{j\ even}\int_{\Omega} |x|^{\alpha_j-2}e^{w_j} PZ_i^0dx
+\rho_0\dfrac{\int_{\Omega}e^{z-8k\pi G(x,0)} PZ_i^0dx}{\int_{\Omega}e^{z-8k\pi G(x,0)}dx}+o\Bigr(\frac{1}{|\log\lambda|}\Bigr)\\
&=\sum_{j<i\ even }8\pi\alpha_j+o\Bigr(\frac{1}{|\log\lambda|}\Bigr),
\end{split}
\end{equation*}
\begin{equation*}
\begin{split}
\rho^+\dfrac{\int_{\Omega}e^W \phi dx}{\int e^W}&=\sum_{j\ even}\int |x|^{\alpha_j-2}e^{w_j} \phi dx
+\rho_0\dfrac{\int_{\Omega}e^{z-8k\pi G(x,0)}\phi dx}{\int_{\Omega}e^{z-8k\pi G(x,0)}dx}+o\Bigr(\frac{1}{|\log\lambda|}\Bigr)\\
&=\sum_{j\ even}\frac{\sigma_j(\lambda)}{|\log\lambda|}+\rho_0\dfrac{\int_{\Omega}e^{z-8k\pi G(x,0)}\phi dx}{\int_{\Omega}e^{z-8k\pi G(x,0)}dx}+o\Bigr(\frac{1}{|\log\lambda|}\Bigr),
\end{split}
\end{equation*}
and
\begin{equation*}
\begin{split}
\lambda\int_{\Omega}e^{-W}\phi PZ_i^0dx&=\sum_{j\ odd}\int_{\Omega}|x|^{\alpha_j-2}e^{w_j}\phi PZ_i^0dx+o\Bigr(\frac{1}{|\log\lambda|}\Bigr)\\
&=\int_{\Omega}|x|^{\alpha_i-2}e^{w_i}\phi Z_i^0dx+\frac{\sigma_i(\lambda)}{|\log\lambda|}+\sum_{j<i\ odd}\frac{2\sigma_j(\lambda)}{|\log\lambda|}+o\Bigr(\frac{1}{|\log \lambda|}\Bigr).
\end{split}
\end{equation*}
Combining all these terms, one can get that for $i$ odd,
\begin{equation}\label{odd}
\begin{split}
\frac{8\pi\sum_{j<i\ even}2\alpha_j}{\rho^+}\Big(\sum_{j\ even}\frac{\sigma_j(\lambda)}{|\log\lambda|}+\rho_0\dfrac{\int_{\Omega}e^{z-8k\pi G(x,0)}\phi dx}{\int_{\Omega}e^{z-8k\pi G(x,0)}dx}\Big)
-\frac{1}{|\log\lambda|}(\sigma_i(\lambda)+\sum_{j<i}2\sigma_j(\lambda))
=o\Bigr(\frac{1}{|\log\lambda|}\Bigr).
\end{split}
\end{equation}
By considering the difference of (\ref{even}) and (\ref{odd}), one has the following:
\begin{equation}\label{eq7}
\begin{cases}
\frac{4\pi \alpha_{i+1}}{\rho^+}\Big(\sum\limits_{j\ even}\sigma_j(\lambda)+|\log\lambda|\rho_0\dfrac{\int_{\Omega}e^{z-8k\pi G(x,0)}\phi dx}{\int_{\Omega}e^{z-8k\pi G(x,0)}dx}\Big)
-\sigma_{i+1}-\sigma_i=o(1)\quad ~\mbox{ for \ i\ odd},\\
\\
\frac{4\pi \alpha_{i}}{\rho^+}\Big(\sum\limits_{j\ even}\sigma_j(\lambda)+|\log\lambda|\rho_0\dfrac{\int_{\Omega}e^{z-8k\pi G(x,0)}\phi dx}{\int_{\Omega}e^{z-8k\pi G(x,0)}dx}\Big)
-\sigma_{i+1}-\sigma_i=o(1)\quad ~\mbox{ for \ i\ even}.
\end{cases}
\end{equation}
From (\ref{odd}), we first have $\sigma_1(\lambda)=o(1)$. From (\ref{eq7}), we have
\begin{equation*}
\begin{cases}
\sigma_i(\lambda)=o(1)&\mbox{ for \ i\ odd}\\
\\
\sigma_i(\lambda)-\frac{4\pi\alpha_i}{\rho^+}\Big(\sum_{j\ even}\sigma_j(\lambda)+|\log\lambda|\rho_0\dfrac{\int_{\Omega}e^{z-8k\pi G(x,0)}\phi dx}{\int_{\Omega}e^{z-8k\pi G(x,0)}dx}\Big)=o(1)~&\mbox{ for \ i \ even}.
\end{cases}
\end{equation*}

\medskip

\noindent{\bf Step 4. } We claim that $\gamma_i=0$ for $i=1,\cdots,k$. 

\medskip

When $i$ is even, multiplying equation (\ref{linearproblem-1}) by $Pw_i$ and integrating over $\Omega$,
\begin{equation*}
\begin{split}
\int_{\Omega}\nabla \phi \nabla Pw_idx-\rho^+\Big(\dfrac{\int_{\Omega}e^W\phi Pw_idx}{\int_{\Omega}e^Wdx}-\dfrac{\int_{\Omega}e^W\phi dx\int_{\Omega}e^WPw_idx}{(\int_{\Omega}e^Wdx)^2} \Big)
-\lambda\int_{\Omega}e^{-W}\phi Pw_idx=\int_{\Omega}\nabla h\nabla Pw_idx.
\end{split}
\end{equation*}
Now we estimate the above equation term by term. First we have
\begin{equation*}
\int_{\Omega} \nabla \phi \nabla Pw_idx=\int_{\Omega} |x|^{\alpha_i-2}e^{w_i}\phi dx=\int_{\R^2}|y|^{\alpha_i-2}e^{w_i(\delta_i y)}\tilde{\phi}_idy=o(1)
\end{equation*}
by (\ref{limit1-1}) and (\ref{integral1}). To estimate the other terms, by \eqref{expansion-proj} and (\ref{deltai}), we have
\begin{equation}
\label{eq6-1}
\begin{aligned}
&\int_{\Omega}|x|^{\alpha_j-2}e^{w_j}\phi Pw_idx
=\int_{\tilde\Omega_j}\frac{2\alpha_j^2|y|^{\alpha_j-2}}{(1+|y|^{\alpha_j})^2}\tilde{\phi}_jPw_i(\delta_j y)dy\\
&\\
&=\begin{cases}
\int_{\tilde\Omega_j}\frac{2\alpha_j^2|y|^{\alpha_j-2}}{(1+|y|^{\alpha_j})^2}\tilde{\phi}_j(-2\alpha_i\log\delta_i+h_i(0))dy\\
+O\Big(\int_{\tilde\Omega_j}\frac{2\alpha_j^2|y|^{\alpha_j-2}}{(1+|y|^{\alpha_j})^2}\tilde{\phi}_j
(|y|^{\alpha_j}(\frac{\delta_j}{\delta_i})^{\alpha_i}+\delta_j|y|+\delta_i^{\alpha_i} )dy\Big)\quad &\mbox{ for }j<i\\
\\
\int_{\tilde\Omega_i}\frac{2\alpha_i^2|y|^{\alpha_i-2}}{(1+|y|^{\alpha_i})^2}\tilde{\phi}_i
(-2\alpha_i\log\delta_i-2\log(1+|y|^{\alpha_i})+h_i(0))dy\\
+O\Big(\int_{\tilde\Omega_i}\frac{2\alpha_i^2|y|^{\alpha_i-2}}{(1+|y|^{\alpha_i})^2}\tilde{\phi}_i
 (\delta_i|y|+\delta_i^{\alpha_i})dy \Big) &\mbox{ for }j=i\\
 \\
\int_{\tilde\Omega_j}\frac{2\alpha_j^2|y|^{\alpha_j-2}}{(1+|y|^{\alpha_j})^2}\tilde{\phi}_j
(-2\alpha_i\log(\delta_j|y|)+h_i(0))dy\\
+O\Big(\int_{\tilde\Omega_j}\frac{2\alpha_j^2|y|^{\alpha_j-2}}{(1+|y|^{\alpha_j})^2}\tilde{\phi}_j
\Big(\frac{1}{|y|^{\alpha_i}}(\frac{\delta_i}{\delta_j})^{\alpha_i}+\delta_j|y|+\delta_i^{\alpha_i} \Big)dy  \Big) & \mbox{ for }j>i
\end{cases}\\
&\\
&=\begin{cases}
\int_{\tilde\Omega_j}\frac{2\alpha_j^2|y|^{\alpha_j-2}}{(1+|y|^{\alpha_j})^2}\tilde{\phi}_j[-2\alpha_i\log d_i-2(k-i+1)\log\lambda+h_i(0)]dy+o(1) &\mbox{ for } j<i\\
\\
\int_{\tilde\Omega_i}\frac{2\alpha_i^2|y|^{\alpha_i-2}}{(1+|y|^{\alpha_i})^2}\tilde{\phi}_i
[-2\alpha_i\log d_i-2(k-i+1)\log\lambda-2\log(1+|y|^{\alpha_i})+h_i(0)]dy+o(1) &\mbox{ for } j=i\\
\\
\int_{\tilde\Omega_j}\frac{2\alpha_j^2|y|^{\alpha_j-2}}{(1+|y|^{\alpha_j})^2}\tilde{\phi}_j
[-2\alpha_i\log d_j-2(k-j+1)\frac{2i-1}{2j-1}\log\lambda-2\alpha_i\log|y|+h_i(0)]dy+o(1)~&\mbox{ for } j>i.
\end{cases}
\end{aligned}
\end{equation}
Based on \eqref{eq6-1}, by the definition of $\sigma_j(\lambda)$, \eqref{sigma}, \eqref{integral2} and \eqref{integral3}, we get
\begin{equation}
\label{eq6}
\begin{aligned}
&\int_{\Omega}|x|^{\alpha_j-2}e^{w_j}\phi Pw_idx\\
&\\
&=\begin{cases}
-2(k-i+1)\sigma_j(\lambda)+o(1) &\mbox{ for } j<i\\
\\
-2(k-i+1)\sigma_i(\lambda)+\int_{\tilde\Omega_i}\frac{2\alpha_i^2|y|^{\alpha_i-2}}{(1+|y|^{\alpha_i})^2}\tilde{\phi}_i
[-2\log(1+|y|^{\alpha_i})]dy+o(1) &\mbox{ for } j=i\\
\\
-2(k-j+1)\frac{2i-1}{2j-1}\sigma_j(\lambda)+\int_{\tilde\Omega_j}\frac{2\alpha_j^2|y|^{\alpha_j-2}}{(1+|y|^{\alpha_j})^2}\tilde{\phi}_j
[-2\alpha_i\log|y|]dy+o(1)\quad &\mbox{ for } j>i
\end{cases}\\
&\\
&=\begin{cases}
-2(k-i+1)\sigma_j(\lambda)+o(1)  &\mbox{ for } j<i\\
\\
-2(k-i+1)\sigma_i(\lambda)+4\pi\alpha_i\gamma_i+o(1)  &\mbox{ for } j=i\\
\\
-2(k-j+1)\frac{2i-1}{2j-1}\sigma_j(\lambda)+8\pi\alpha_i\gamma_j+o(1) &\mbox{ for } j>i,
\end{cases}
\end{aligned}
\end{equation}
where we used \cite[(4.18)-(4.20) ]{grossi-pistoia}. 

Then by Lemma \ref{errorestimate}, (\ref{eq6}) and (\ref{limit2-1})
\begin{equation*}
\begin{split}
\rho^+\frac{\int_{\Omega}e^W \phi Pw_i dx}{\int_{\Omega}e^Wdx}&=\sum_{j\ even}\int_{\Omega}|x|^{\alpha_j-2}e^{w_j} \phi Pw_idx
+\rho_0\frac{\int_{\Omega}e^{z-8k\pi G(x,0)}\phi Pw_i}{\int_{\Omega}e^{z-8k\pi G(x,0)}dx}+o\Big(\frac{1}{|\log\lambda|}\Big)\\
&=\sum_{j\ even}\int_{\Omega}|x|^{\alpha_j-2}e^{w_j} \phi Pw_idx+o(1)\\
&=4\pi\alpha_i\Bigr(\gamma_i+\sum_{j>i\ even}2\gamma_j\Bigr)-2(k-i+1)\Bigr(\sigma_i(\lambda)+\sum_{j<i\ even}\sigma_j(\lambda)\Bigr)\\
&\quad-\sum_{j>i\ even}2(k-j+1)\frac{2i-1}{2j-1}\sigma_j(\lambda)+o(1).
\end{split}
\end{equation*}
Similarly, by replacing $\phi$ by $1$ in (\ref{eq6}), one can deduce that
\begin{equation*}
\rho^+\dfrac{\int_{\Omega}e^WPw_idx}{\int_{\Omega}e^Wdx}=-8\pi|\log\lambda|\Big(\sum_{j\leq i\ even}(k-i+1)\alpha_j+\sum_{j>i\ even}(k-j+1)\frac{2i-1}{2j-1}\alpha_j\Big)+O(1),
\end{equation*}
and
\begin{equation*}
\rho^+\dfrac{\int_{\Omega}e^W\phi dx}{\int_{\Omega}e^Wdx}=\sum_{i\ even}\frac{\sigma_i(\lambda)}{|\log\lambda|}+\rho_0\dfrac{\int_{\Omega}e^{z-8k\pi G(x,0)}\phi}{\int_{\Omega}e^{z-8k\pi G(x,0)}dx}+o\Big(\frac{1}{|\log\lambda|}\Big).
\end{equation*}
Moreover,
\begin{equation*}
\begin{split}
&\lambda\int_{\Omega}e^{-W}\phi Pw_idx=\sum_{j\ odd}\int_{\Omega} |x|^{\alpha_j-2}e^{w_j}\phi Pw_idx+o(1)\\
&=8\pi \alpha_i\sum_{j>i\ odd}\gamma_j-\sum_{j<i\ odd}2(k-i+1)\sigma_j(\lambda)-\sum_{j>i\ odd}2(k-j+1)\frac{2i-1}{2j-1}\sigma_j(\lambda)+o(1),
\end{split}
\end{equation*}
and
\begin{equation*}
\int_{\Omega}\nabla \phi \nabla Pw_idx
=\int_{\Omega} |x|^{\alpha_i-2}e^{w_i}\phi dx
=\int_{\R^2}|y|^{\alpha_i-2}e^w\tilde{\phi}_idy=o(1)
\end{equation*}
by (\ref{limit1-1}) and (\ref{integral1}). Combining all the above estimates, we get that for $i$ even,
\begin{equation}\label{even1}
\begin{split}
&4\pi\alpha_i(\gamma_i+\sum_{j>i}2\gamma_j)-\sum_{j\leq i}2(k-i+1)\sigma_j-\sum_{j>i}2(k-j+1)\frac{2i-1}{2j-1}\sigma_j\\
&+\frac{8\pi}{\rho^+}\sum_{j\leq i\ even}(k-i+1)\alpha_j\Big(\sum_{l \ even}\sigma_l(\lambda)+|\log\lambda|\rho_0\frac{\int_{\Omega}e^{z-8k\pi G(x,0)}\phi dx}{\int_{\Omega}e^{z-8k\pi G(x,0)}dx} \Big)\\
&+\frac{8\pi}{\rho^+}\sum_{j>i \ even}(k-j+1)\frac{2i-1}{2j-1}\alpha_j
\Big(\sum_{l \ even}\sigma_l(\lambda)
+|\log\lambda|\rho_0\frac{\int_{\Omega}e^{z-8k\pi G(x,0)}\phi dx}{\int_{\Omega}e^{z-8k\pi G(x,0)}dx} \Big)=o(1).
\end{split}
\end{equation}

Next we consider $i$ odd. Similarly to the previous estimates, one has
\begin{equation*}\begin{split}
\rho^+\dfrac{\int_{\Omega}e^W \phi Pw_i dx}{\int_{\Omega}e^Wdx}&=\sum_{j\ even}\int_{\Omega}|x|^{\alpha_j-2}e^{w_j} \phi Pw_idx
+\rho_0\frac{\int_{\Omega}e^{z-8k\pi G(x,0)}\phi Pw_i}{\int_{\Omega}e^{z-8k\pi G(x,0)}dx}+o(\frac{1}{|\log\lambda|})\\
&=8\pi\alpha_i\sum_{j>i\ even}\gamma_j-\sum_{j<i\ even}2(k-i+1)\sigma_j(\lambda)-\sum_{j>i\ even}2(k-j+1)\frac{2i-1}{2j-1}\sigma_j(\lambda)+o(1)
\end{split}
\end{equation*}
\begin{equation*}
\rho^+\dfrac{\int_{\Omega}e^WPw_idx}{\int_{\Omega} e^Wdx}
=-8\pi|\log\lambda|\Big(\sum_{j<i\ even}(k-i+1)\alpha_j
+\sum_{j>i\ even}(k-j+1)\frac{2i-1}{2j-1}\alpha_j\Big)+O(1),
\end{equation*}
and
\begin{equation*}
\begin{split}
&\lambda\int_{\Omega} e^{-W}\phi Pw_idx=\sum_{j\ odd}\int_{\Omega}  |x|^{\alpha_j-2}e^{w_j}\phi Pw_idx+o(1)\\
&=4\pi\alpha_i\gamma_i+8\pi \alpha_i\sum_{j>i\ odd}\gamma_j
-\sum_{j\leq i\ odd}2(k-i+1)\sigma_j(\lambda)-\sum_{j>i\ odd}2(k-j+1)\frac{2i-1}{2j-1}\sigma_j(\lambda)+o(1).
\end{split}
\end{equation*}
So we have for $i $ odd,
\begin{equation}\label{odd1}
\begin{split}
&4\pi\alpha_i(\gamma_i+\sum_{j>i}2\gamma_j)-\sum_{j\leq i}2(k-i+1)\sigma_j-\sum_{j>i}2(k-j+1)\frac{2i-1}{2j-1}\sigma_j\\
&+\frac{8\pi}{\rho^+}\sum_{j< i\ even}(k-i+1)\alpha_j\Big(\sum_{l \ even}\sigma_l(\lambda)+|\log\lambda|\rho_0\frac{\int_{\Omega}e^{z-8k\pi G(x,0)}\phi dx}{\int_{\Omega}e^{z-8k\pi G(x,0)}dx} \Big)\\
&+\frac{8\pi}{\rho^+}\sum_{j>i \ even}(k-j+1)\frac{2i-1}{2j-1}\alpha_j
\Big(\sum_{l \ even}\sigma_l(\lambda)
+|\log\lambda|\rho_0\frac{\int_{\Omega}e^{z-8k\pi G(x,0)}\phi dx}{\int_{\Omega}e^{z-8k\pi G(x,0)}dx} \Big)=o(1).
\end{split}
\end{equation}
By step 3 we know that the terms in (\ref{even1}) and (\ref{odd1}) containig $\sigma_i$ are of order $o(1)$, and thus
\begin{equation*}
4\pi\alpha_i\Bigr(\gamma_i+\sum_{j>i}2\gamma_j\Bigr)=o(1),
\end{equation*}
from which we deduce that $\gamma_i=0$ for $i=1,\cdots,k$.

\medskip

\noindent{\bf Step 5.} Finally, we derive a contradiction.

\medskip

Multiplying equation (\ref{linearproblem-1} ) by $\phi$ and integrating, we get
\begin{equation*}
\begin{split}
\int_{\Omega}|\nabla \phi|^2dx-\lambda\int_{\Omega}e^{-W}\phi^2dx-\rho^+\Big( \dfrac{\int_{\Omega}e^W\phi^2dx}{\int_{\Omega}e^Wdx}-\dfrac{(\int_{\Omega} e^W\phi dx)^2}{(\int_{\Omega} e^Wdx)^2}\Big)
=\int_{\Omega} \nabla h\nabla \phi dx.
\end{split}
\end{equation*}
From Step 1-Step 4 and the assumptions on $\phi$ and $h$, we have that the left hand side of the above equation tends to $1$ while the right hand side is of order $o(1)$. This yields a contradiction.
\qed

\medskip

Once the a priori estimates are carried out, the existence of a solution to the linear problem \eqref{linearproblem-1} follows easily by using the Fredholm alternative, see for example Proposition 5.1 in \cite{dpr1}. 

\medskip

\subsection{Conclusion}
By exploiting the linear theory developed in the previous subsection it is then standard to derive an existence result for the nonlinear problem \eqref{ex-1} based on the contraction mapping, similarly to Proposition \ref{existence}. We skip here the full argument referring to Proposition 5.4 in \cite{dpr1} for full details.
\begin{proposition}\label{existence-1}
For any $\ve>0$ sufficiently small, there exist $\lambda_0>0$ and $C>0$ such that for any $\lambda\in (0, \lambda_0)$, there exists a unique $\phi\in \mathcal{H}_l$ solving
\begin{equation} \label{ex-1}
\Delta(W+\phi)+\rho^+\frac{e^{W+\phi}}{\int_{\Omega}e^{W+\phi}dx}-\lambda e^{-W-\phi}=0\quad\mbox{ in }\quad \Omega
\end{equation}
and
\begin{equation}\label{eq9}
\|\phi\|\leq C\lambda^{\frac{1}{2(2k-1)}-\ve}.
\end{equation}
\end{proposition}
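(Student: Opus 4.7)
The plan is to reformulate \eqref{ex-1} as a fixed-point problem in the symmetric space $\mathcal{H}_l$ and solve it by contraction mapping, in complete analogy with Subsection \ref{sec3.4} but using the linear theory of Subsection \ref{sec4.3}. First I would expand \eqref{ex-1} around the approximate solution $W$: writing $f(W)=e^{-W}$ and $g(W)=e^W/\int_\Omega e^W dx$ as in \eqref{f,g}, the equation for $\phi$ becomes
\begin{equation*}
\Delta\phi+\rho^+\!\left(\frac{e^W\phi}{\int_\Omega e^W dx}-\frac{e^W\int_\Omega e^W\phi\,dx}{(\int_\Omega e^W dx)^2}\right)+\lambda e^{-W}\phi=\mathcal{R}(\phi),
\end{equation*}
where $\mathcal{R}(\phi)=-(E_1+E_2)+\mathcal{N}(\phi)$ collects the error $R=-E_1-E_2$ of Lemma \ref{errorestimate} together with the nonlinear remainder
\begin{equation*}
\mathcal{N}(\phi)=\lambda\bigl(f(W+\phi)-f(W)-f'(W)\phi\bigr)+\rho^+\bigl(g(W+\phi)-g(W)-g'(W)\phi\bigr).
\end{equation*}
Because $W$ and the linear operator are $l$-symmetric, the problem is well posed on $\mathcal{H}_l$.

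Next I would invoke the linear theory. Lemma \ref{aprioriestimate-1} combined with the Fredholm alternative (as mentioned right after its proof) yields a bounded inverse $T:\mathcal{H}_l\to\mathcal{H}_l$ of the linear operator with norm $O(|\log\lambda|)$ when acting on data of the form $\Delta h$. Rewriting $\mathcal{R}(\phi)=\Delta h(\phi)$ via solving a Dirichlet problem (using $\|u\|\le c_q\|v\|_q$ from Section \ref{sec2}), the equation becomes the fixed-point identity $\phi=\mathcal{T}(\phi):=T(\Delta^{-1}\mathcal{R}(\phi))$. By Lemma \ref{errorestimate} one has $\|E_1+E_2\|_q=O(\lambda^{\frac{2-q}{2q(2k-1)}})$ for $q$ close to $1$, so choosing $q$ appropriately gives $\|\mathcal{T}(0)\|\le C|\log\lambda|\,\lambda^{\frac{1}{2(2k-1)}-\ve/2}\ll \lambda^{\frac{1}{2(2k-1)}-\ve}$.

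Then I would show $\mathcal{T}$ is a contraction on the ball $B:=\{\phi\in\mathcal{H}_l:\|\phi\|\le C\lambda^{\frac{1}{2(2k-1)}-\ve}\}$. This reduces to proving that for $\phi_1,\phi_2\in B$ one has $\|\mathcal{N}(\phi_1)-\mathcal{N}(\phi_2)\|_q\le o(|\log\lambda|^{-1})\|\phi_1-\phi_2\|$. Using integral forms of Taylor's theorem for $f$ and $g$, this follows from estimates of the type $\|\lambda e^{-W}(e^{-\theta\phi_i}-1)\|_q+\|g'(W+\theta\phi_i)-g'(W)\|_q=o(|\log\lambda|^{-1})$, which are controlled via Hölder with $p,q$ close to $1$ and the smallness $\|\phi_i\|=O(\lambda^{\frac{1}{2(2k-1)}-\ve})$; this is the same mechanism used in Proposition 4.10 of \cite{dpr} and Proposition 5.4 of \cite{dpr1}.

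The main obstacle will be the quadratic control of $\mathcal{N}(\phi)$: the singular weights $|x|^{\alpha_i-2}e^{w_i}$ appearing in $\lambda e^{-W}$ have very different scales $\delta_i=d_i\lambda^{(k-i+1)/(4i-2)}$ for $i=1,\dots,k$, so each local estimate must be performed on the annulus $A_i$ of \eqref{annulus}, exactly as in the proof of Lemma \ref{errorestimate}, and the worst exponent must still beat the $|\log\lambda|$ loss from the inverse of the linear operator. Once this is verified, Banach's theorem produces the unique $\phi\in B$ satisfying \eqref{ex-1} and \eqref{eq9}; symmetry is preserved because $\mathcal{T}$ maps $\mathcal{H}_l$ into itself by construction. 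I would then omit the lengthy computations and refer to Proposition 5.4 of \cite{dpr1} for the detailed contraction estimate, as the authors do.
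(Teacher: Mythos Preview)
Your proposal is correct and follows essentially the same approach as the paper: the authors simply state that the result follows by a standard contraction mapping argument based on the linear theory of Subsection~\ref{sec4.3}, in analogy with Proposition~\ref{existence}, and refer to Proposition~5.4 of \cite{dpr1} for the detailed estimates---which is precisely the strategy you outline. Your write-up in fact supplies more detail than the paper itself gives (the paper omits the computation entirely), and aside from a harmless sign slip in identifying the error as $-(E_1+E_2)$ rather than $E_1-E_2$, everything is in order.
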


\medskip
\noindent{\bf Proof of Theorem \ref{thm-1}. } By Proposition \ref{existence-1}, $u_\lambda=W_{\lambda}+\phi_\lambda$ is a solution to the original problem (\ref{sinh-gordon}) with $\rho^+_\lambda=\rho^+=4\pi k(k-1)+\rho_0$ and
$\rho^-_{\lambda}=\lambda \int_{\Omega}e^{-u}dx$.
Then by Lemma \ref{errorestimate} and (\ref{eq9})
\begin{equation*}
\begin{split}
\rho^-_\lambda&=\lambda\int_{\Omega}e^{-u}dx=\lambda \int_{\Omega}e^{-W}dx+o(1)
=\sum_{i\ odd}\int_{\Omega}|x|^{\alpha_i-2}e^{w_i}dx+o(1)\\
&=\sum_{i\ odd}4\pi\alpha_i+o(1)=4\pi k(k+1)+o(1).
\end{split}
\end{equation*}
\qed






\begin{thebibliography}{10}

\bibitem{ajy}
W. Ao, A.Jevnikar, and W.Yang.
\newblock On the boundary behavior for the blow up solutions of the sinh-Gordon equation and rank $N$ Toda systems in bounded domains.
\newblock \emph{Int. Math. Res. Not. (IMRN)}, doi:10.1093/imrn/rny263, (2018).

\bibitem{bartolucci-lin}
D. Bartolucci, and C.S. Lin.
\newblock Uniqueness results for mean field equations with singular data.
\newblock {\em Comm. in PDEs},
  34(7):676--702, 2009.

\bibitem{bjmr}	
L. Battaglia, A. Jevnikar, A. Malchiodi, and D. Ruiz. 
\newblock A general existence result for the Toda system on compact surfaces. 
\newblock \emph{Adv. Math.},
 285: 937--979, 2015. 	

\bibitem{bm}
H. Brezis, and F. Merle.
\newblock Uniform estimates and blow-up behavior for solutions of $-\Delta u = V (x)e^u$ in two dimensions.
\newblock \emph{Comm. PDEs} 16: 1223--1253, 1991.

\bibitem{dpkm}
M. Del Pino, M. Kowalczyk, and M. Musso.
\newblock Singular limits in Liouville-type equations.
\newblock \emph{Calc. Var. and PDEs} 24: 47--81, 2005. 

\bibitem{dpr}
T. D'Aprile, A. Pistoia, and D. Ruiz.
\newblock A continuum of solutions for the $SU(3)$ Toda system exhibiting
  partial blow-up.
\newblock {\em Proceedings of the London Mathematical Society},
  111(4):797--830, 2015.

\bibitem{dpr1}
T. D'Aprile, A. Pistoia, and D. Ruiz.
\newblock Asymmetric blow-up for the $SU(3)$ Toda system.
\newblock {\em J. Funct. Anal.}, 271(3):495--531, 2016.

\bibitem{efp}
P. Esposito, P. Figueroa, and A. Pistoia.
\newblock On the mean field equation with variable intensities on pierced domains.  
\newblock \emph{Nonlinear Anal.} 190, 111597, 2020.

\bibitem{grossi-pistoia}
M. Grossi, and A. Pistoia.
\newblock Multiple blow-up phenomena for the sinh-Poisson equation.
\newblock {\em Arch. Rational Mech. Anal.}, 209(1):287--320,
  2013.

\bibitem{jev1}	
A. Jevnikar. 
\newblock An existence result for the mean field equation on compact surfaces in a doubly supercritical regime.
\newblock \emph{Proc. Royal Soc. Edinb. A} 143(5): 1021--1045, 2013.	

\bibitem{jev2}
A. Jevnikar. 
\newblock New existence results for the mean field equation on compact surfaces via degree theory.
\newblock \emph{Rend. Semin. Mat. Univ. Padova} 136: 11--17, 2016. 

\bibitem{jev3}
A. Jevnikar. 
\newblock Multiplicity results for the mean field equation on compact surfaces.
\newblock Adv. Nonlinear Stud. 16(2): 221--229, (2016).

\bibitem{jev4}
A. Jevnikar. 
\newblock Blow-up analysis and existence results in the supercritical case for an asymmetric mean field equation with variable intensities.
\newblock \emph{J. Diff. Eq.} 263: 972--1008, 2017.

\bibitem{jevnikar-wei-yang}
A. Jevnikar, J. Wei, and W. Yang.
\newblock Classification of blow-up limits for the Sinh-Gordon equation.
\newblock  \emph{Differential and Integral Equations} 31(9-10): 657--684, 2018.

\bibitem{jevnikar-wei-yang2}
A. Jevnikar, J. Wei, and W. Yang.
\newblock  On the Topological degree of the Mean field equation with two parameters.
\newblock \emph{Indiana Univ. Math. J.} 67(1): 29--88, 2018.

\bibitem{jevnikar-yang}
A. Jevnikar, and W. Yang.
\newblock  Analytic aspects of the Tzitz\'eica equation: blow-up analysis and existence results.
\newblock \emph{Calc. Var. and PDEs} 56(2): 56:43, 2017.

\bibitem{jost-lin-wang}
J. Jost, C.S. Lin, and G. Wang.
\newblock Analytic aspects of the Toda system: II. Bubbling behavior and
  existence of solutions.
\newblock {\em Comm. Pure Appl. Math.}, 59(4):526--558,
  2006.

\bibitem{jwyz}
J. Jost, G. Wang, D. Ye, and C. Zhou.
\newblock The blow up analysis of solutions of the elliptic sinh-Gordon
  equation.
\newblock {\em Calc. Var. and PDEs},
  31(2):263--276, 2008.

\bibitem{Montgomery}
G. Joyce, and D. Montgomery.
\newblock Negative temperature states for the two-dimensional guiding-centre
  plasma.
\newblock {\em J. Plasma Phys.}, 10(1):107--121, 1973.

\bibitem{li}
Y.Y. Li.
\newblock On a singularly perturbed elliptic equation.
\newblock {\em Adv. Differential Equations}, 2(6):955--980, 1997.



\bibitem{ohtsuka-suzuki}
H. Ohtsuka, and T. Suzuki.
\newblock Mean field equation for the equilibrium turbulence and a related
  functional inequality.
\newblock {\em Adv. Differential Equations}, 11(3):281--304, 2006.

\bibitem{onsager}
L. Onsager.
\newblock Statistical hydrodynamics.
\newblock {\em Il Nuovo Cimento (1943-1954)}, 6(2):279--287, 1949.

\bibitem{pistoia-ricchiardi}
A. Pistoia, and T. Ricciardi.
\newblock Concentrating solutions for a Liouville type equation with variable
  intensities in 2d-turbulence.
\newblock {\em Nonlinearity}, 29(2):271, 2016.

\bibitem{pistoia-ricchiardi2}
A. Pistoia, and T. Ricciardi. 
\newblock Sign-changing tower of bubbles for a sinh-Poisson equation with asymmetric exponents.
\newblock \emph{Discrete Contin. Dyn. Syst.} 37: 5651-–5692, 2017. 

\bibitem{ricciardi-takahashi}
T. Ricciardi, and R. Takahashi.
\newblock Blow-up behavior for a degenerate elliptic
  sinh-Poisson equation with variable intensities.
\newblock {\em Calc. Var. and PDEs},
  55(6):152, 2016.


\bibitem{wente}
H. Wente.
\newblock Counterexample to a conjecture of H. Hopf.
\newblock {\em Pacific J. Math.}, 121(1):193--243, 1986.

\end{thebibliography}
\end{document}